\newtheoremstyle{plainsl}%
        {\topsep}
        {\topsep}
        {\slshape} 
        {}
        {\normalfont\bfseries}
        {.}
        { }
        {}
\theoremstyle{plainsl}
\newtheorem{theorem}{Theorem}[section]
\newtheorem{proposition}[theorem]{Proposition}
\newtheorem{lemma}[theorem]{Lemma}
\newtheorem{corollary}[theorem]{Corollary}
\newtheorem{remark}[theorem]{Remark}
\newcommand\cref[1]{Corollary~\ref{cor:#1}}
\newcommand\sqr[2]{{\vbox{\hrule height.#2pt
    \hbox{\vrule width.#2pt height#1pt \kern#1pt
        \vrule width.#2pt}\hrule height.#2pt}}}
\renewcommand\qed{%
        \ifmmode\eqno\sqr53
        \else\nolinebreak\ \hfill\sqr53\medbreak\fi}
\numberwithin{equation}{section}
\newcommand{\cC}{{\mathcal C}}
\newcommand{\tI}{\tilde{I}}
\newcommand{\cR}{{\mathcal R}}
\newcommand{\tU}{\tilde{U}}
\newcommand{\tW}{\tilde{W}}
\DeclareMathOperator\spn{span}
\DeclareMathOperator\diam{diam}
\begin{document}
\thispagestyle{empty}
\setcounter{page}{1}
\title{On the connectivity of graphs in association schemes}
\author{Brian G. Kodalen \\
William J. Martin  \\
Department of Mathematical Sciences \\
Worcester Polytechnic Institute \\
Worcester, Massachusetts \\
{\tt \{bgkodalen,martin\}@wpi.edu}}

\date{\today}
\maketitle

\medskip

\begin{abstract}
Let $(X,\cR)$ be a commutative association scheme and let $\Gamma=(X,R\cup R^\top)$ be a connected undirected  graph where $R\in \cR$. Godsil (resp., Brouwer) conjectured that the edge connectivity (resp., vertex connectivity) of 
$\Gamma$ is equal to its valency. In this paper, we prove that the deletion of the neighborhood of any vertex
leaves behind at most one non-singleton component. Two vertices $a,b\in X$ are called ``twins'' in $\Gamma$
if they have identical neighborhoods: $\Gamma(a)=\Gamma(b)$. We characterize twins in polynomial association schemes and show that, in the absence of twins, the deletion of any vertex and its neighbors in $\Gamma$
results in a connected graph. Using this and other tools, we find lower bounds on the connectivity of
$\Gamma$, especially in the case where $\Gamma$ has diameter two.  Among the applications of these results,
we find that the only connected relations in symmetric association schemes which admit a disconnecting set 
of size two are those which are ordinary polygons.
\end{abstract}

\section{Overview}
\label{Sec:overview}

Let $X$ be a finite set of size $v$ and let $\cR=\{R_0,\ldots, R_d\}$ be a partition of $X\times X$ into binary 
relations such that $R_0$ is the identity relation on $X$ and for each $i\in \{1,\ldots,d\}$ 
there exists $i' \in \{1,\ldots,d\}$ such that $R_i^\top = R_{i'}$ where $R^\top = \{(b,a)\mid (a,b)\in R\}$. 
We say $(X,\cR)$ is an \emph{association scheme} (with $d$ \emph{classes})  if there exist integers 
$p_{ij}^k$ ($0\le i,j,k\le d$) such that 
$$ \left| \left\{ c \in X \mid (a,c) \in R_i \ \wedge \ (c,b) \in R_j \right\} \right|  = p_{ij}^k $$
whenever $(a,b)\in R_k$. Throughout this paper, all association schemes are \emph{commutative}: we require
$p_{ij}^k=p_{ji}^k$ for all $i,j,k$.  The problems addressed here immediately reduce to the  
\emph{symmetric} case where $i'=i$ for all $i$; i.e., we will work with symmetric relations only.

Association schemes arise in group theory, graph theory, design theory, coding theory and more. For example, if $X$ is a finite group with conjugacy classes $\cC[g] = \{hgh^{-1}:h\in X\}$ ($g\in X$), then the conjugacy class relations $R_g = \left\{ (a,b) \mid ab^{-1} \in \cC[g]  \right\}$ yield a 
commutative association scheme on the vertex set $X$. The orbits on $X\times X$ of any 
permutation group $G$ acting generously transitively on a set $X$  give a symmetric association scheme.
Some of the most well-studied association schemes are distance-regular graphs, including Moore graphs, distance-transitive graphs, strongly regular graphs, generalized polygons, etc. One studies 
$q$-ary error-correcting codes of length  $n$ as vertex subsets of the Hamming association scheme
$H(n,q)$ \cite[Sec.~9.2]{bcn} and one studies $t$-($v,k,\lambda$) designs as vertex subsets of the 
Johnson association scheme  $J(v,k)$ \cite[Sec.~9.1]{bcn}.  For an introduction to the 
extensive literature on the subject, the reader may consult \cite{del,banito,bcn,godsil}, 
the survey \cite{mtsurvey}, or the more recent book of  Bailey \cite{bailey} which focuses on 
connections to the statistical design of experiments.

Let $(X,\cR)$ be a commutative $d$-class association scheme with \emph{basis relations}
$\cR=\{R_0,\ldots,$ $R_d\}$. For $1\le  i\le d$, we have
a (possibly directed) simple graph $\Gamma_i=(X,R_i)$ on $X$. For $a\in X$, the set $X$ is partitioned into \emph{subconstituents} $R_i(a) = \{ b\in X \mid (a,b)\in R_i \}$ ($0\le i\le d$) with respect to $a$. The association
scheme is \emph{symmetric} if all basis relations are symmetric; each $\Gamma_i$ may be considered as an
undirected graph in this case as $i'=i$ for all $i$. The association scheme  is \emph{primitive} 
\cite[Sec.~2.4]{bcn} if $\Gamma_i$ is connected for all $i=1,\ldots, d$ and \emph{imprimitive} otherwise. A 
\emph{system of imprimitivity} for $(X,\cR)$ is any non-trivial partition of $X$ consisting of the components of
some graph $(X,R)$ where $R$ is a union of basis relations. (The trivial partitions  $\{X\}$ and 
$\{ \{a\} \mid a\in X\}$ are not systems of imprimitivity.)
For each $i$, we  may construct  an undirected graph $H_i$ (possibly with loops) on vertex set 
$\{0,1,\ldots,d\}$, joining  $j$ to $k$ if $p_{ij}^k+p_{ik}^j > 0$. 
We call this the \emph{unweighted distribution diagram} corresponding to basis relation $R_i$.

With reference to a fixed undirected graph $\Gamma$ 
with vertex set $V\Gamma$ and edge 
set $E\Gamma$,  we say that $a$ and $b$ are \emph{twins} if $a\neq b$ yet
$\Gamma(a)=\Gamma(b)$, where $\Gamma(a)$ denotes the set of neighbors of $a$ in graph $\Gamma$.
Write\footnote{Note that some authors assign another meaning to $\bot$; here, we follow \cite[p.\ 440]{bcn}.} 
$a^\bot = \{a \} \cup \Gamma(a)$. A graph $\Gamma$ is \emph{complete multipartite} if any two
non-adjacenct vertices are twins: i.e., the complement of $\Gamma$ is a union of complete graphs.

The main goal of this paper is to prove the following theorem:

\begin{theorem} \label{Tmain}
Let $(X,\cR)$ be a symmetric association scheme. Assume the graph $\Gamma=(X,R_i)$ is connected 
and not complete multipartite. Let
$H=H_i$ be the corresponding unweighted distribution diagram on $\{0,1,\ldots, d\}$. The following are equivalent:
\begin{itemize}
\item[(1)] there exists $a\in X$ for which the subgraph $\Gamma \setminus a^\bot$ is connected;
\item[(2)] for all $a\in X$, the subgraph $\Gamma \setminus a^\bot$ is connected;
\item[(3)]  the subgraph $H \setminus \{0,i\}$ is connected;
\item[(4)] $\Gamma$ contains no twins.
\end{itemize} 
\end{theorem}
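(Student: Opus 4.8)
The plan is to show that all four conditions are equivalent to a single combinatorial condition, namely that $H\setminus\{0,i\}$ has no isolated vertex, by reducing everything to the behaviour of the subconstituents $R_j(a)$ inside $\Gamma\setminus a^\bot$. Fix a vertex $a$. The vertex set of $\Gamma\setminus a^\bot$ is exactly $\bigcup_j R_j(a)$, where $j$ ranges over $J:=\{1,\dots,d\}\setminus\{i\}$, and this partition into subconstituents is the backbone I would exploit. The first step is the edge-count identity: if $(a,b)\in R_j$ then the number of neighbors of $b$ lying in $R_k(a)$ equals $p_{ik}^{j}$, and counting edges between $R_j(a)$ and $R_k(a)$ from both sides gives $k_j\,p_{ik}^{j}=k_k\,p_{ij}^{k}$ (writing $k_j=|R_j(a)|$ for the subconstituent sizes), so $p_{ij}^k>0\iff p_{ik}^j>0$. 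Hence an edge of $H$ between $j$ and $k$ is present precisely when some---equivalently every---vertex of $R_j(a)$ has a neighbor in $R_k(a)$.

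From this identity I would extract the two facts that drive the proof. First, if $j\in J$ is \emph{not} isolated in $H\setminus\{0,i\}$ (it has a neighbor, or a loop, inside $J$), then \emph{every} vertex of $R_j(a)$ has a neighbor inside $\Gamma\setminus a^\bot$. Second, if $j\in J$ \emph{is} isolated in $H\setminus\{0,i\}$, then for $b\in R_j(a)$ all neighbors of $b$ lie in $R_i(a)$ (not in $\{a\}$, since $(a,b)\in R_j$ with $j\neq i$), and equality of valencies forces $\Gamma(b)=R_i(a)=\Gamma(a)$; thus $b$ is a twin of $a$ and an isolated vertex of $\Gamma\setminus a^\bot$, and conversely every isolated vertex of $\Gamma\setminus a^\bot$ arises this way. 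Therefore the twins of $a$ are exactly $\bigcup\{R_j(a):j\text{ isolated in }H\setminus\{0,i\}\}$; in particular whether a twin exists depends only on the intersection numbers and not on $a$, which is what makes (1) and (2) interchangeable. This already yields (4)$\iff$``$H\setminus\{0,i\}$ has no isolated vertex,'' and it identifies ``complete multipartite'' with the extreme case in which every far subconstituent is isolated; so the hypothesis that $\Gamma$ is not complete multipartite guarantees at least one non-isolated $j\in J$, and hence that $\Gamma\setminus a^\bot$ has a non-singleton vertex and at least two vertices.

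The remaining content is to upgrade ``no isolated vertex'' to genuine connectivity, and here the decisive tool is the lemma, established earlier in the paper, that $\Gamma\setminus a^\bot$ has at most one non-singleton component. Granting it, the components of $\Gamma\setminus a^\bot$ are one large component together with the singletons, which are exactly the twins of $a$; so $\Gamma\setminus a^\bot$ is connected iff $a$ has no twin, giving (1)$\iff$(2)$\iff$(4). For (3) I would argue at the level of $J$: if $H\setminus\{0,i\}$ split into two or more non-trivial diagram-components $D_1,D_2,\dots$, then by the first fact each $\bigcup_{j\in D_t}R_j(a)$ induces a subgraph with no isolated vertex, and there are no edges between distinct blocks; each block would then contribute its own non-singleton component of $\Gamma\setminus a^\bot$, contradicting the lemma. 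Hence $H\setminus\{0,i\}$ has at most one non-trivial component, so---using that the not-complete-multipartite hypothesis supplies a non-isolated index---it is connected precisely when it has no isolated vertex, i.e.\ (3)$\iff$(4).

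The genuine obstacle is the ``at most one non-singleton component'' lemma itself; once it is in hand, everything above is bookkeeping with intersection numbers and the valency equality. I would also dispose of the degenerate possibilities---$J$ a single index, or $\Gamma\setminus a^\bot$ a single vertex---separately, but each of these forces $\Gamma$ to be a complete graph or a cocktail-party graph, both complete multipartite, and so is excluded by hypothesis. If the one-component lemma were not available, the hard step would be exactly the implication (3)$\Rightarrow$(2): diagram connectivity only guarantees that every vertex has a neighbor in an adjacent subconstituent, not that the bipartite-type graphs between adjacent subconstituents are themselves connected, and it is precisely this gap that the lemma closes.
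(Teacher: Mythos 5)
Your bookkeeping is correct, and much of it mirrors the paper's own treatment of $(3)\Leftrightarrow(4)$: the identity $v_j p_{ik}^j = v_k p_{ij}^k$, the identification of the isolated vertices of $H\setminus\{0,i\}$ with the indices $j$ for which $R_j(a)$ consists of twins of $a$ (the paper's set $\tI$), and the observation that ``$a$ has a twin'' is a parameter condition independent of the basepoint are all right, as is the deduction of $(1)\Leftrightarrow(2)\Leftrightarrow(3)\Leftrightarrow(4)$ from the one-component lemma. The flaw is where you have placed all of the weight. The statement you quote --- that $\Gamma\setminus a^\bot$ has at most one non-singleton component --- is Corollary~\ref{C2} of this paper, and it is presented as a \emph{corollary}, proved only after Theorem~\ref{Tmain}. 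What is genuinely established before the main theorem is Theorem~\ref{TWempty}, which says $\tW=\emptyset$: taking $i=1$, the \emph{diagram} $H'=H\setminus\{0,1\}$ has at most one component besides the isolated vertices in $\tI$. That is strictly weaker than what you assume. Knowing that all non-twin indices lie in a single component $\tU$ of $H'$ does not prevent the preimage $U_a=\varphi_a^{-1}(\tU)$ from splitting into several non-singleton components of $\Gamma\setminus a^\bot$, because distinct components of $\Gamma\setminus a^\bot$ can project onto the \emph{same} component of $H'$. So invoking Corollary~\ref{C2} as ``established earlier in the paper'' makes your argument circular.

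The missing content is exactly the implication $(3)\Rightarrow(2)$, which you correctly flag in your closing paragraph but never supply. The paper closes this gap with a substantive argument: assuming $H'$ connected but $\Gamma\setminus a^\bot$ disconnected, every component of $\Gamma\setminus a^\bot$ meets every subconstituent $R_j(a)$, $j>1$ (lift walks in $H'$ via Proposition~\ref{Pphi}); taking $j$ with $D:=d_H(0,j)$ maximal and $x,y\in R_j(a)$ in distinct components, every $xy$-path crosses $\Gamma(a)$, so $d_\Gamma(x,y)\ge 2(D-1)$, while Lemma~\ref{LdiamGamma} gives $d_\Gamma(x,y)\le D$, forcing $D\le 2$; then choosing $\ell>1$ minimizing $p_{11}^\ell$ and $x,y\in R_\ell(a)$ in distinct components, Proposition~\ref{Pxya} forces $\Gamma(x)\cap\Gamma(y)=\Gamma(a)\cap\Gamma(y)$ with $p_{11}^j=p_{11}^\ell$, and a final count over the neighbors $z\in R_\ell(a)$ of each $a'\in\Gamma(a)$ yields $\Gamma(a)\subseteq\Gamma(x)$, i.e., $a$ and $x$ are twins --- contradicting $(3)$ together with the not-complete-multipartite hypothesis. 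It is this argument (essentially re-run with $\tU$ in place of all of $H'$) that upgrades the diagram-level Theorem~\ref{TWempty} to the graph-level Corollary~\ref{C2}; neither it nor any substitute for it appears in your proposal. In short, you have reduced the theorem, correctly and cleanly, to its own hardest step, and then assumed that step.
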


We obtain the following corollaries.

\newpage

\begin{corollary}  \label{C2}
Let $(X,\cR)$ be a commutative association scheme. Assume the undirected graph 
$\Gamma=(X,R_i  \cup R_{i'})$ is connected and $a\in X$. Then $\Gamma \setminus 
\Gamma(a)$ contains at most one non-singleton component.
\end{corollary}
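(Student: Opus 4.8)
The plan is to reduce to the symmetric case, dispose of a trivial extremal case, and then leverage Theorem \ref{Tmain} through a quotient by twin classes. Since the scheme is commutative, the relations $\tilde R_j = R_j \cup R_{j'}$ form a symmetric association scheme on $X$ (this is the reduction to the symmetric case noted in Section~\ref{Sec:overview}), and $\Gamma = (X, R_i \cup R_{i'})$ is precisely the graph $(X,\tilde R_i)$ of a basis relation of this symmetric scheme. I may therefore assume from the outset that $(X,\cR)$ is symmetric and $\Gamma = (X,R_i)$, with $\Gamma$ connected. Note that $\Gamma \setminus \Gamma(a)$ always contains $a$ as an isolated vertex, so the real content is that the remaining non-neighbors of $a$ induce at most one non-singleton component.

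First I would record the local structure that drives everything. Twins are non-adjacent, and ``being twins or equal'' is an equivalence relation, so $X$ is partitioned into twin classes, each an independent set. Moreover $\Gamma(a)$ is a union of twin classes: if $c\in\Gamma(a)$ and $c'$ is a twin of $c$, then $a\in\Gamma(c)=\Gamma(c')$, so $c'\in\Gamma(a)$. Consequently the twin class $T_a$ of $a$ is disjoint from $\Gamma(a)$, every vertex of $T_a$ is isolated in $\Gamma\setminus\Gamma(a)$, and every non-singleton component of $\Gamma\setminus\Gamma(a)$ is contained in $Y:=X\setminus(\Gamma(a)\cup T_a)$.

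Next I would pass to the quotient graph $\bar\Gamma$ on the set of twin classes, in which two classes are adjacent precisely when some (equivalently every) pair of representatives is adjacent. Using the characterization of twins in the scheme, the twin classes form a system of imprimitivity and $\bar\Gamma$ is a basis-relation graph of the quotient symmetric association scheme; it is connected because $\Gamma$ is, and it has no twins, since distinct classes with equal neighborhoods in $\bar\Gamma$ would lift to twins in $\Gamma$ and hence coincide. Applying Theorem \ref{Tmain} to $\bar\Gamma$ via the implication $(4)\Rightarrow(2)$ yields that $\bar\Gamma\setminus\bar a^\bot$ is connected, with the sole exception that $\bar\Gamma$ is complete multipartite; but a twin-free complete multipartite graph is a complete graph, for which $\bar\Gamma\setminus\bar a^\bot=\emptyset$. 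In every case $\bar\Gamma\setminus\bar a^\bot$ is connected or empty.

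Finally I would lift this back. The set $Y$ is exactly the preimage of $\bar\Gamma\setminus\bar a^\bot$ under the quotient map, and the subgraph induced on $Y$ is the blow-up of $\bar\Gamma\setminus\bar a^\bot$ obtained by replacing each class by an independent set and each edge by a complete bipartite graph. A blow-up of a connected graph on at least two vertices by nonempty independent sets is connected, while a blow-up of a single vertex is an independent set of singletons; thus $Y$ induces at most one non-singleton component. Together with the isolated vertices of $T_a$, this shows $\Gamma\setminus\Gamma(a)$ has at most one non-singleton component. The main obstacle is the quotient step: one must verify that the twin classes genuinely form a system of imprimitivity and that the quotient carries a symmetric association scheme structure making $\bar\Gamma$ a basis-relation graph, so that Theorem \ref{Tmain} applies. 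This is exactly where the characterization of twins \emph{within the scheme}, rather than merely in the graph, is indispensable.
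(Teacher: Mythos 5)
Your proposal is correct, but it takes a genuinely different route from the paper's. The paper proves Corollary \ref{C2} in one line, reading it off from Theorem \ref{TWempty}: since $\tW=\emptyset$, the diagram $H'=H\setminus\{0,1\}$ consists of the isolated twin-indices $\tI$ together with at most one further component $\tU$; twins of $a$ are isolated vertices of $\Gamma\setminus\Gamma(a)$, and the vertices lying over $\tU$ are placed in a single component by an adaptation of the argument proving $(3)\Rightarrow(2)$ of Theorem \ref{Tmain} (this last step is what the word ``essentially'' in the paper's proof leaves to the reader). You instead quotient by twin classes and invoke $(4)\Rightarrow(2)$ of Theorem \ref{Tmain} for the quotient. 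Your reduction is sound: twins are exactly the pairs lying in relations $R_j$ with $p_{11}^j=v_1$, so the twin-or-equal relation is $R_0\cup\bigcup_{j\in\tI}R_j$, a union of basis relations that is an equivalence relation, and its classes do form a system of imprimitivity; adjacency between twin classes is all-or-nothing, so the image of $R_1$ is a single basis relation of the quotient scheme, the induced subgraph on $Y$ is exactly the blow-up you describe, and your observation that a twin-free complete multipartite graph is complete correctly disposes of the case excluded by Theorem \ref{Tmain}. The one ingredient you import that the paper never develops is the existence of the quotient association scheme over a system of imprimitivity; this is standard for commutative schemes (quotient over a closed subset, see \cite{banito}), but it is a genuine external dependency, and you were right to flag it as the crux. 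The trade-off: your argument is modular and makes the global structure explicit (a blow-up of a twin-free graph, plus the isolated twin class of $a$), in effect supplying detail that the paper's one-line proof omits; the paper's route stays entirely within its own machinery ($H'$, the $\tI/\tU/\tW$ decomposition, Theorem \ref{TWempty}) and needs no quotient-scheme theory, at the cost of leaving implicit how one rules out several non-singleton components of $\Gamma\setminus\Gamma(a)$ mapping onto the same component $\tU$ of $H'$.
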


\begin{corollary} \label{C1}
Let $(X,\cR)$ be a commutative association scheme.  Assume the undirected graph 
$\Gamma=(X,R_i  \cup R_{i'})$ is connected and $a\in X$.  Then, for any  $T \subseteq a^\bot$ with $\Gamma(a) \not\subseteq T$, the graph  $\Gamma \setminus T$ is connected.
\end{corollary}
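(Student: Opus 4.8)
The plan is to reduce to the symmetric case (as the paper notes, working with the symmetric relation $S=R_i\cup R_{i'}$ as a single basis relation of the symmetrized scheme) and then to exploit the equitability of the subconstituent partition $\{R_0(a),\dots,R_d(a)\}$ with respect to the fixed vertex $a$. Since $\Gamma(a)\not\subseteq T$, I would fix a surviving neighbor $c\in\Gamma(a)\setminus T$; the whole argument then amounts to showing that every vertex of $\Gamma\setminus T$ can be joined to $c$. Writing $D=X\setminus a^\bot$ for the set of vertices at distance at least two from $a$, note that $\Gamma\setminus T$ contains all of $D$ together with $N:=\Gamma(a)\setminus T\ni c$ and possibly $a$ itself.

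First I would analyze $\Gamma\setminus\Gamma(a)$. By Corollary~\ref{C2} this graph has at most one non-singleton component; since $a$ is isolated in it, the induced graph $\Gamma[D]$ splits as $C\sqcup I$, where $C$ is the unique (possibly empty) non-singleton component and $I$ is a set of isolated vertices. Each $b\in I$ has all its neighbors inside $\Gamma(a)$, and regularity of $\Gamma$ forces $|\Gamma(b)|=|\Gamma(a)|$, so in fact $\Gamma(b)=\Gamma(a)$: the isolated vertices of $\Gamma[D]$ are precisely the twins of $a$. In particular every $b\in I$ is adjacent to all of $\Gamma(a)$, hence to $c$.

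The crux is to show that, when $C\neq\emptyset$, \emph{every} neighbor of $a$ has a neighbor in $C$. Here I would use that the subconstituent partition is equitable: for each $k$, every vertex of $R_i(a)=\Gamma(a)$ has the same number of neighbors in $R_k(a)$, an intersection number depending only on $i,k$. Two consequences follow. First, each subconstituent $R_k(a)$ with $k\neq 0,i$ lies entirely in $C$ or entirely in $I$, since a twin has no neighbor in any non-neighbor subconstituent whereas a $C$-vertex has at least one (a neighbor inside $C\subseteq D$), and equitability makes this dichotomy uniform on $R_k(a)$; thus $C$ is a union of whole subconstituents. Second, because $\Gamma$ is connected and $C\subseteq D$, some subconstituent $R_k(a)\subseteq C$ receives an edge from $\Gamma(a)$, so the corresponding intersection number is positive, and equitability then yields that \emph{every} $u\in\Gamma(a)$ has a neighbor in that $R_k(a)\subseteq C$. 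This attachment statement is the step I expect to be the main obstacle, precisely because C1 would fail if some neighbor of $a$ attached only to vertices of $T$; regularity of the scheme is exactly what rules this out.

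Finally I would assemble the pieces. If $C\neq\emptyset$, then $C$ is connected, every vertex of $N$ attaches to $C$ by the attachment statement, so $C\cup N$ is connected; each twin and, if present, $a$ is adjacent to all of $\Gamma(a)\supseteq N$ and hence to $c$, so each joins this component, and $\Gamma\setminus T$ is connected. If $C=\emptyset$, then $D=I$ consists only of twins; every twin and $a$ is adjacent to all of $N$, and using $c$ (or any surviving twin, or $a$) as a hub one checks directly that $\Gamma\setminus T$ is connected, the only degenerate case being $D=\emptyset$, where $a$ dominates $X$ and regularity forces $\Gamma$ to be complete. In every case $\Gamma\setminus T$ is connected, as required.
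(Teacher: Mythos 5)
Your proof is correct, and while it rests on the same foundation as the paper's (Corollary~\ref{C2}, i.e.\ Theorem~\ref{TWempty}), it is organized quite differently. The paper's proof splits into two cases: with no twins it invokes Theorem~\ref{Tmain} ((4)$\Rightarrow$(2)) to get that $\Gamma\setminus a^\bot$ is connected, and with twins it uses Corollary~\ref{C2} plus the fact that every twin of $a$ is adjacent to all of $\Gamma(a)$. You instead give a single uniform argument using only Corollary~\ref{C2}, regularity, and equitability of the subconstituent partition: the singleton components of $\Gamma\setminus a^\bot$ are exactly the twins of $a$, and every vertex of $\Gamma(a)$ has a neighbor in the unique non-singleton component $C$ whenever $C\neq\emptyset$. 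This buys you two things. First, you never need Theorem~\ref{Tmain}, so complete multipartite and complete graphs require no separate treatment; indeed your argument re-derives (4)$\Rightarrow$(2) of Theorem~\ref{Tmain} as a by-product, since ``no twins'' means ``no singleton components.'' Second, and more substantively, you make explicit the attachment step that the paper's proof treats only implicitly: the paper observes that each surviving $a'\in\Gamma(a)$ has some neighbor in $V\Gamma_a$, but a priori that neighbor could be a twin of $a$, which by itself does not place $a'$ in the component containing $C$. Your intersection-number argument --- some vertex of $\Gamma(a)$ has a neighbor in some $R_k(a)\subseteq C$ by connectivity of $\Gamma$, hence every vertex of $\Gamma(a)$ has $p_{1k}^1>0$ such neighbors --- is precisely what closes this step, and you correctly identify it as the crux (it is what rules out a neighbor of $a$ attaching only to vertices of $T$). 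The cost is a slightly longer terminal case analysis ($C$ empty or not, twins present or not), but the logical dependencies are cleaner than in the paper's own proof.
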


\begin{corollary} \label{C3}
Let $(X,\cR)$ be a commutative association scheme. Assume the undirected graph 
$\Gamma=(X,R_i  \cup R_{i'})$ is connected and $C\subseteq  X$ is the vertex set of a clique in $\Gamma$. Then $\Gamma \setminus C$ is connected.
\end{corollary}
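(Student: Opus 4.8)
The plan is to deduce this directly from Corollary~\ref{C1} by locating a single well-chosen vertex of the clique $C$. First I would dispose of the degenerate cases: if $C=\emptyset$ then $\Gamma\setminus C=\Gamma$ is connected by hypothesis, while if $C=X$ then $\Gamma\setminus C$ is empty and there is nothing to prove. So I may assume $\emptyset\neq C\neq X$.

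Next I would argue that some vertex of $C$ must have a neighbor outside $C$. Indeed, if every $a\in C$ satisfied $\Gamma(a)\subseteq C$, then no edge of $\Gamma$ would join $C$ to $X\setminus C$; since $C$ is a nonempty proper subset of $X$, this would split $\Gamma$ into two parts with no edges between them, contradicting the assumed connectivity of $\Gamma$. Hence there exists $a\in C$ with $\Gamma(a)\not\subseteq C$.

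Now I fix such a vertex $a$ and verify the two hypotheses needed to invoke Corollary~\ref{C1} with $T=C$. Because $C$ is a clique containing $a$, every other vertex of $C$ is adjacent to $a$, so $C\setminus\{a\}\subseteq\Gamma(a)$ and therefore $C\subseteq\{a\}\cup\Gamma(a)=a^\bot$. Moreover, by the choice of $a$ we have $\Gamma(a)\not\subseteq C=T$. Corollary~\ref{C1} then gives at once that $\Gamma\setminus C$ is connected, which completes the argument.

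The one point requiring care---really the only step with any content---is the existence of a boundary vertex $a\in C$ having a neighbor in $X\setminus C$. This is where the connectivity of $\Gamma$ is used, and it is precisely what rules out the bad case $\Gamma(a)\subseteq C$ that would otherwise obstruct the application of Corollary~\ref{C1}. Everything else is a direct translation of the clique hypothesis into the containment $C\subseteq a^\bot$, so I do not expect any genuine obstacle beyond this observation.
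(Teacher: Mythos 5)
Your proof is correct and follows essentially the same route as the paper, which simply takes $T=C$ for some $a\in C$ and invokes Corollary~\ref{C1}. The only difference is that you explicitly verify the hypothesis $\Gamma(a)\not\subseteq T$ by choosing $a$ to be a boundary vertex of the clique (and dispose of $C=\emptyset$, $C=X$ separately), a detail the paper's one-line proof leaves implicit; this is a welcome bit of care, not a different argument.
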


The graphs considered in these theorems are all undirected graphs, either a symmetric basis relation in our
association scheme or the symmetrization $(X,R_i  \dot{\cup} R_{i'})$ of some directed basis relation. In both
cases, the edge set of $\Gamma$ is a basis relation of the \emph{symmetrization} $(X,\cR')$ of $(X,\cR)$
where 
$$ \cR' = \left\{ R \cup R^\top \mid R \in \cR \right\} .$$
In this way, the main theorem, while dealing only with the symmetric case, extends immediately to give these
corollaries.

We should remark that these last two results extend naturally to the case where $\Gamma$ is not connected in that the deletion of vertices does not increase the number of components. One verifies this by applying the
respective corollary to the subscheme induced by vertices in a particular component of $\Gamma$.

\section{Connectivity results for highly regular graphs}
\label{Sec:history}

Before we provide proofs of these results and explore various consequences, we now survey earlier
work on the connectivity of graphs in certain association schemes.
 
 Brouwer and Mesner \cite{bromes} showed in 1985 that the vertex connectivity of a strongly regular graph 
 $\Gamma$ is equal to its valency and that the only disconnecting sets of minimum size are the neighborhoods $\Gamma(a)$  of its vertices.  (Brouwer \cite{brouwer} mentions that the corresponding result for edge connectivity
 was established by Ples\'{n}ik in 1975.) This result on vertex connectivity was extended by Brouwer and Koolen 
 \cite{brouwer-koolen} in 2009 to show that a distance-regular graph of valency at least three has vertex connectivity equal to its valency  and that the only disconnecting sets of minimum size are again the neighborhoods $\Gamma(a)$.  Meanwhile a conjecture of Brouwer on the size and nature of the ``second smallest'' disconnecting sets in a strongly regular graph has inspired both new results and interesting examples by Cioab\u{a}, et al.\ \cite{cioaba,cioaba1,cioaba2,cioaba3,cioaba4}.
 
 Godsil \cite{godsil1981} conjectured in 1981 that the edge connectivity of a connected basis relation in any symmetric association scheme is equal to the valency of that graph. Brouwer \cite{brouwer} claimed in 1996 that the same should hold for the vertex connectivity. In \cite{godsil1981}, Godsil proves that if $\Gamma=(X,R_1)$ is 
 regular of valency $v_1$, then the edge connectivity of $\Gamma$ is at least $\frac{v_1}{2} \frac{|X|}{|X|-1}$.
 In 2006, Evdokimov and Ponomarenko proved Brouwer's conjecture for $\Gamma=(X,R_1)$
 in the case when $(X,\cR)$ is equal to the projection onto $X$  of the $v_1$-fold
 tensor product $\bigotimes_{h=1}^{v_1} (X,\cR)$. See \cite{EP1} for definitions and details.
 
 Much more is known about the connectivity of  vertex- and edge-transitive graphs. 
 (See  \cite[Sec.~3.3-4]{godsilroyle}.)  Mader  \cite{mader}  and Watkins \cite{watkins}
 independently obtained the following two results in 1970. The vertex connectivity of an edge-transitive graph
 is equal to the smallest valency. A vertex transitive graph of valency $k$ has vertex connectivity 
 at least $\frac23(k+1)$. Further, in 1971, Mader \cite{mader2} proved that any vertex transitive graph 
 has edge connectivity equal to its valency.

\section{Preliminary results}
\label{Sec:prelim}

In preparation for the proof of our main result, we now prove a few lemmas. We utilize basic terminology and
notation regarding symmetric association schemes. We refer the reader to Section 2.2 of \cite{bcn} for 
basic facts about the Bose-Mesner algebra and Section 2.4 of \cite{bcn} for information on imprimitivity.

Let $A_i$ denote the $01$-matrix with rows and columns indexed by $X$ and $(a,b)$-entry equal to one
if $(a,b)\in R_i$ and equal to zero otherwise. Then the \emph{Bose-Mesner algebra} 
$\spn( A_0,\ldots,A_d)$ is a complex vector space closed under both ordinary and entrywise 
multiplication. So it admits a basis $E_0 = \frac{1}{|X|}J, \ldots,E_d$ of pairwise orthogonal idempotents ($E_i E_j = \delta_{i,j} E_i$)
and the change of basis matrices $[P_{ij}]_{i,j=0}^d$ and $[Q_{ij}]_{i,j=0}^d$ given by 
$$A_j = \sum_{i=0}^d P_{ij}E_i \qquad \text{and} \qquad E_j = \frac{1}{|X|} \sum_{i=0}^d Q_{ij} A_i $$
satisfy $QP= |X| I$ (in particular, $\sum_{j=0}^d Q_{ij} = 0$ for $i\neq 0$) and
$A_i E_j = P_{ji} E_j$ \cite[p.~45]{bcn}, as well as $P_{ji} = \frac{v_i}{m_j} \bar{Q}_{ij}$ where $v_i = P_{0i}$ and $m_j = Q_{0j}$  \cite[Lemma~2.2.1(iv)]{bcn}.

\subsection{Twins}

Let  $\Gamma = (X,R)$ be the graph of a basis relation in $(X,\cR)$.  Write $R(a)=\Gamma(a)$. Examples
where twins arise (i.e., $R(a)=R(b)$ for $a\neq b$) include not only complete multipartite graphs  but   
antipodal distance-regular graphs such as the $n$-cube in which case $R$ is the distance-$\frac{n}{2}$ 
relation of the association scheme.

\begin{lemma} \label{Ltwins}
Let $(X,\cR)$ be a symmetric association scheme and let $\Gamma=(X,R_i)$ for some $i \neq 0$. If $a$ and $b$ 
are twins, then $(X,\cR)$ is imprimitive and some  system of imprimitivity exists in which $a$ and $b$ belong
to the same fibre.
\end{lemma}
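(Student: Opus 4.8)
The plan is to show that the binary relation ``$x$ and $y$ have the same $\Gamma$-neighbourhood'' is in fact a union of basis relations, and then to argue that the partition of $X$ into its equivalence classes is a non-trivial system of imprimitivity in which $a$ and $b$ share a fibre.

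First I would reinterpret the twin condition through intersection numbers. Fix $x,y$ with $(x,y)\in R_j$. Since the scheme is symmetric, the number of common neighbours $|R_i(x)\cap R_i(y)|$ equals the intersection number $p_{ii}^j$, which depends only on $j$. Because $\Gamma$ is regular of valency $v_i$, two neighbourhoods $R_i(x)$ and $R_i(y)$, each of size $v_i$, coincide if and only if they meet in $v_i$ points; hence $R_i(x)=R_i(y)$ exactly when $p_{ii}^j=v_i$. In particular, whether $x$ and $y$ are twins depends only on the relation $R_j$ containing $(x,y)$, so twin-ness is constant on each basis relation.

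Next, set $J=\{\,j : p_{ii}^j=v_i\,\}$ and $R=\bigcup_{j\in J}R_j$. By the previous step, $(x,y)\in R$ precisely when $R_i(x)=R_i(y)$; since equality of neighbourhoods is visibly reflexive, symmetric, and transitive, $R$ is an equivalence relation. Note $0\in J$, as a vertex shares all $v_i$ of its neighbours with itself ($p_{ii}^0=v_i$), and the class of $a$ contains $b\neq a$ by hypothesis, so at least one class is non-singleton. This partition cannot be all of $X$: if every pair of vertices were twins, all vertices would share a single neighbour set $S$, yet no vertex lies in its own neighbourhood (as $i\neq 0$), forcing $S=\varnothing$ and contradicting $v_i\geq 1$. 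Thus the classes of $R$ form a non-trivial partition, each class being a connected component of $(X,R)$ with $R$ a union of basis relations; this is a system of imprimitivity, so $(X,\cR)$ is imprimitive, and $a,b$ lie in the same fibre by construction.

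I expect the crux to be the first step: recognizing that the count of common neighbours is the single intersection number $p_{ii}^j$, together with the size argument that turns ``$v_i$ common neighbours'' into ``equal neighbourhoods.'' This is what promotes the combinatorial notion of twins into a genuine relation of the scheme; once twin-ness is known to be a union of basis relations, the equivalence-relation structure and the resulting system of imprimitivity follow with little further work.
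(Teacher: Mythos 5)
Your proof is correct, but it takes a genuinely different route from the paper's. You argue purely combinatorially: by symmetry of the scheme, $|R_i(x)\cap R_i(y)| = p_{ii}^j$ whenever $(x,y)\in R_j$, and two $v_i$-element sets coincide exactly when they meet in $v_i$ points, so the relation ``$R_i(x)=R_i(y)$'' is precisely the union of those basis relations $R_j$ with $p_{ii}^j=v_i$. Since this relation is an equivalence relation containing $R_0$ and the pair $(a,b)$, and since it cannot identify all of $X$ (all vertices being mutual twins would force the common neighbourhood to be empty, as no vertex is its own $R_i$-neighbour, contradicting $v_i\ge 1$), its classes are the components of a graph $(X,R)$ with $R$ a union of basis relations, i.e., a non-trivial system of imprimitivity in the paper's sense, with $a$ and $b$ in one fibre. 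The paper instead argues spectrally: writing $u_j(a)$ for column $a$ of the primitive idempotent $E_j$, the twin condition gives $P_{ji}u_j(a)=P_{ji}u_j(b)$ for all $j$, so $u_j(a)=u_j(b)$ whenever $P_{ji}\neq 0$; since $Q_{i0}=1$ and $\sum_{\ell} Q_{i\ell}=0$ for $i\neq 0$, some $\ell\neq 0$ has $Q_{i\ell}\neq 0$, equivalently $P_{\ell i}\neq 0$, so $E_\ell$ has repeated columns, and imprimitivity follows from a cited theorem of Martin--Muzychuk--Williford, the fibres being the classes of column-equality in $E_\ell$. Your approach buys elementarity and self-containedness --- no idempotents, no external citation --- and it explicitly identifies the set of relations carrying twin pairs, namely $\{\,j : p_{ii}^j = v_i\,\}$. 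The paper's approach buys machinery that is reused immediately: the Remark following the lemma analyzes, for $P$- and $Q$-polynomial schemes, exactly which idempotents $E_j$ can satisfy $u_j(a)=u_j(b)$, so the spectral formulation (which idempotent has repeated columns) is the form of the conclusion the authors need downstream.
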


\begin{proof}
Denote by $u_j(a)$ the column of $E_j$ indexed by $a\in X$. Then we have, for each $0 \le j\le d$,
$$ P_{ji} u_j(a) = A_i u_j(a) = \sum_{(x, a)\in R_i} u_j(x) =  \sum_{(x,b)\in R_i} u_j(x)   = A_i u_j(b) = P_{ji} u_j(b)$$
so that either $P_{ji}=0$ or $u_j(a)=u_j(b)$. We have $Q_{ij} \neq 0$ if and only if $P_{ji} \neq 0$ from above.
Moreover, for $i\neq 0$, $Q_{i0}+Q_{i1}+\cdots + Q_{id} = 0$. 
Since $Q_{i0}=1$ , we must have $Q_{i\ell } \neq 0$ for some $\ell \neq 0$. Thus $P_{\ell i} \neq 0$ with $\ell \neq 0$ forcing $u_\ell(a) = u_\ell(b)$. Thus $E_\ell$ has repeated columns and the association scheme is imprimitive
\cite[Theorem~2.1]{mmw}. It is well-known that the equivalence classes of the relation $x\cong y \Leftrightarrow u_\ell(x)=u_\ell(y)$ form a non-trivial system of imprimitivity in this case. $\Box$
\end{proof}

\begin{remark} We now discuss twins in polynomial association schemes. These symmetric schemes include
the $P$-{\em polynomial} association schemes where $R_i$ is the distance-$i$ relation in some 
distance-regular graph $\Gamma=(X,R_1)$ and the $Q$-{\em polynomial} or \emph{cometric} association
schemes \cite[Section~2.7]{bcn}.
\begin{enumerate}
\item Assume $(X,\cR)$ is the association scheme coming from a distance-regular graph $\Gamma=(X,R_1)$ with  distance-$k$ relation $R_k$ and assume $R_i(a)=R_i(b)$ for distinct vertices $a$ and $b$. Suppose $a$ and $b$ do not belong to a common antipodal fibre in an antipodal system of imprimitivity. Then $\Gamma$ must be bipartite, in which case columns $a$ and $b$ of $E_j$ can be identical only for $j\in \{0,d\}$ (where  $E_0,\ldots,E_d$ are ordered so
that $P_{01}>P_{11}>\cdots >P_{d1}=-P_{01}$ \cite[Prop.~4.4.7]{bcn}). But then, except for $d=2$, there is 
some $j \neq 0,d$ for which $P_{ji}\neq 0$; thus $a=b$ for $d>2$. So bipartite systems of imprimitivity only 
arise for $d=2$. Viewing complete bipartite graphs as having the antipodal property, we then have that any distinct $a$ and $b$ with  $R_i(a)=R_i(b)$ must belong to the same antipodal fibre, $d$ is even, and $i=d/2$.
\item Assume $(X,\cR)$ is a cometric association scheme, not a polygon, and $a\neq b$ yet 
$R_i(a)=R_i(b)$. Then, by a theorem of Suzuki, et al. \cite{suzimprim,cerzosuz,tanaka2}, $(X,\cR)$ is either 
$Q$-bipartite or $Q$-antipodal. Let $E_0,\ldots,E_d$ be a $Q$-polynomial ordering of the primitive idempotents
and order relations such that $Q_{01}>Q_{11}>\cdots > Q_{d1}$. If $a$ and $b$ belong to the same fibre of a 
$Q$-bipartite imprimitivity system, then $d$ must be even and $i=\frac{d}{2}$ by Corollary 4.2 in \cite{mmw}. Otherwise, $a$ and $b$ must belong to the same $Q$-antipodal fibre and $u_j(a)=u_j(b)$ only for 
$j \in \{0,d \}$.  So $P_{ji}=0$ for $1\le j< d$, forcing $(X,R_i)$ to be an imprimitive strongly regular graph 
(as it is regular with three  eigenvalues). Since the scheme is cometric with an imprimitive strongly regular 
graph as a basis relation, we must have $d=2$ and $a$ and $b$ are non-adjacent vertices in a complete multipartite graph.
\end{enumerate}
\end{remark}

\subsection{The graph homomorphism $\varphi_a$}

For $0< i \le d$, let $\Gamma_i=(X,R_i)$ and let $H_i$ denote the unweighted distribution diagram 
corresponding to symmetric relation $R_i$. 

\begin{proposition}  \label{Pphi}
 For any $a\in X$, the map $\varphi_{a,i} : \Gamma_i \rightarrow H_i$ sending $b\in X$ to $j$ where $(a,b)\in R_j$
 is a graph homomorphism. Under this map, every walk in $\Gamma_i$ projects to a walk in $H_i$ of the same length. As a partial converse, for any $b\in X$ with $(a,b)\in R_{j_0}$ and any walk 
$$ w = (j_0,j_1,\ldots, j_\ell) $$
in $H_i$, there is at least one walk $(b=b_0,b_1,\ldots, b_\ell)$  of length $\ell$ in $\Gamma_i$ such 
that $\varphi_{a,i}(b_s)=j_s$ for each $0\le s \le \ell$.  $\Box$
\end{proposition}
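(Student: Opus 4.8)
The plan is to prove the three assertions of Proposition~\ref{Pphi} separately, unpacking the definition of the unweighted distribution diagram $H_i$ in each case. Recall that $H_i$ has vertex set $\{0,1,\ldots,d\}$ and joins $j$ to $k$ exactly when $p_{ij}^k+p_{ik}^j>0$; since we are in the symmetric case, $p_{ij}^k=p_{ik}^j$ (by commutativity together with the symmetry $R_j=R_{j'}$), so this simplifies to the condition $p_{ij}^k>0$.

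First I would verify that $\varphi_{a,i}$ is a graph homomorphism. Suppose $b$ and $c$ are adjacent in $\Gamma_i$, so $(b,c)\in R_i$, and write $\varphi_{a,i}(b)=j$ and $\varphi_{a,i}(c)=k$, meaning $(a,b)\in R_j$ and $(a,c)\in R_k$. Then $c$ is a common element witnessing $(a,c)\in R_k$ and $(c,b)\in R_i$ from an element at relation $j$ from $a$; more precisely, counting elements $x$ with $(a,x)\in R_j$ and $(x,c)\in R_i$ shows $p_{ji}^k=p_{ij}^k>0$ since $b$ is such an $x$. Hence $j$ and $k$ are adjacent in $H_i$, so $\varphi_{a,i}$ sends edges to edges. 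The claim that every walk projects to a walk of the same length is then immediate by applying the homomorphism property edge-by-edge along the walk.

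The main content is the partial converse, and this is the step I expect to be the genuine obstacle. Given $b$ with $\varphi_{a,i}(b)=j_0$ and a walk $w=(j_0,j_1,\ldots,j_\ell)$ in $H_i$, I would construct the lift $(b_0,b_1,\ldots,b_\ell)$ by induction on the length $\ell$, at each step choosing $b_{s+1}$ adjacent to $b_s$ in $\Gamma_i$ with $\varphi_{a,i}(b_{s+1})=j_{s+1}$. The key point making this induction go through is that adjacency $p_{i j_s}^{j_{s+1}}>0$ in $H_i$ \emph{guarantees} the existence of such a successor: fixing $a$ and $b_s$ with $(a,b_s)\in R_{j_s}$, the number of vertices $b_{s+1}$ satisfying both $(b_s,b_{s+1})\in R_i$ and $(a,b_{s+1})\in R_{j_{s+1}}$ equals the intersection number $p_{i\,j_{s+1}}^{j_s}$ (reading the triangle $a$--$b_{s+1}$--$b_s$), which by commutativity and symmetry equals $p_{i j_s}^{j_{s+1}}$, the very quantity that is positive by the edge condition in $H_i$. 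Thus a valid choice of $b_{s+1}$ always exists, and iterating produces the desired lifted walk.

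The delicate bookkeeping is making sure the intersection number that controls the induction is exactly the one appearing in the adjacency condition of $H_i$; this requires careful use of both commutativity ($p_{ij}^k=p_{ji}^k$) and the symmetric assumption ($R_i=R_{i'}$, $R_j=R_{j'}$) to identify $p_{i j_{s+1}}^{j_s}$ with $p_{i j_s}^{j_{s+1}}$. Once this identification is in hand, the positivity hypothesis transfers directly and no further estimate is needed; the existence of at least one lift follows, which is all the statement asserts (the lift need not be unique). I would finish by noting that the base case $s=0$ is trivial since $b_0=b$ already satisfies $\varphi_{a,i}(b_0)=j_0$ by hypothesis.
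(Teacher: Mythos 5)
Your skeleton --- an edge-by-edge verification of the homomorphism property followed by an inductive, one-step-at-a-time lifting of the walk --- is exactly the right one (the paper itself supplies no proof, stating the proposition as immediate with a box), but the identity you single out as the ``delicate bookkeeping'' step is false. In a symmetric commutative scheme you may swap the two \emph{lower} indices, $p_{ij}^k = p_{ji}^k$, but you may \emph{not} exchange a lower index with the upper one: $p_{ij}^k = p_{ik}^j$ fails in general. Concretely, in the Petersen graph (a symmetric two-class scheme) one has $p_{11}^2 = 1$ while $p_{12}^1 = 2$. This matters precisely where your induction is load-bearing: $H_i$ joins $j_s$ to $j_{s+1}$ when $p_{i j_s}^{j_{s+1}} + p_{i j_{s+1}}^{j_s} > 0$, whereas the number of admissible successors $b_{s+1}$ of $b_s$ is the single intersection number $p_{i j_{s+1}}^{j_s}$; a priori the edge could be present because only the \emph{other} summand is positive, and your appeal to the false equality is what papers over this possibility.

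The gap is easily repaired. Counting in two ways the triples $(a,b,c)$ with $(a,b) \in R_{j_s}$, $(a,c) \in R_{j_{s+1}}$ and $(c,b) \in R_i$ yields the standard parameter identity $v_{j_s}\, p_{i j_{s+1}}^{j_s} = v_{j_{s+1}}\, p_{i j_s}^{j_{s+1}}$ (cf.\ \cite[Lemma~2.1.1]{bcn}). Since valencies are positive, the two intersection numbers $p_{i j_{s+1}}^{j_s}$ and $p_{i j_s}^{j_{s+1}}$ are either both zero or both positive; hence adjacency of $j_s$ and $j_{s+1}$ in $H_i$ really does force $p_{i j_{s+1}}^{j_s} > 0$, and your inductive construction of the lift then proceeds exactly as written. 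The same remark repairs your opening claim that the edge condition ``simplifies to $p_{ij}^k > 0$'': that simplification is correct, but because the two numbers vanish simultaneously, not because they are equal (for the homomorphism direction this subtlety is harmless anyway, since there you only need that positivity of one summand implies the edge). With this substitution your proof is complete and is, in substance, the argument the authors left to the reader.
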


We will call $\varphi_{a,i}$ the \emph{projection map} and will omit the second subscript when 
it is clear from the context.

For vertices $x$ and $y$ in  an undirected graph $\Delta$, we use $d_\Delta(x,y)$ 
to denote the path-length distance from $x$ to $y$ in  $\Delta$, setting $d_\Delta(x,y)=\infty$ when no path from
$x$ to $y$ exists in $\Delta$.

\begin{lemma} \label{LdiamGamma}
Let $(X,\cR)$ be a symmetric association scheme. For $0<i\le d$, let $\Gamma=(X,R_i)$ be a connected
graph and let $H$ denote its unweighted distribution diagram. For $(a,b)\in R_j$, $d_\Gamma(a,b)=d_H(0,j)$.
\end{lemma}

\begin{proof}
A shortest path in $H$ from $j$ to $0$ lifts via $\varphi_a^{-1}$ to a walk in $\Gamma$ from $b$ to
a vertex in $R_0(a)$ --- i.e., lifts to a walk from $b$ to $a$ --- of length $d_H(j,0)$. 
Conversely, each path from $b$ to $a$ in $\Gamma$ projects to a walk of the same length from 
$j$ to $0$ in $H$. $\Box$ 
\end{proof}

\subsection{The decomposition $\{ I_a, U_a, W_a\}$ with respect to a basepoint $a$}
\label{Sec:IUW}

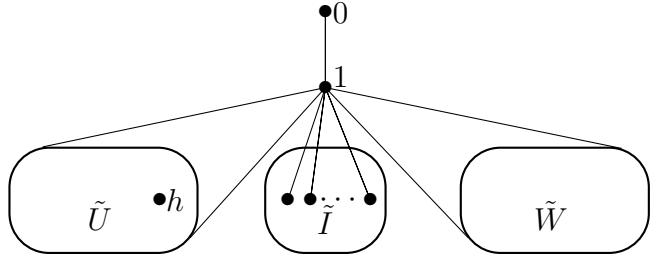
\begin{figure}
\begin{center}
\begin{tikzpicture}
\draw[thick,black]
  (-1.2,-0.7) {[rounded corners=15pt] --
  ++(2.5,0)  -- 
  ++(0,1.4) --
  ++(-2.5,0) --
  cycle};
\draw[thick,black]
  (2.2,-0.7) {[rounded corners=15pt] --
  ++(1.6,0)  -- 
  ++(0,1.4) --
  ++(-1.6,0) --
  cycle};
\draw[thick,black]
  (4.8,-0.7) {[rounded corners=15pt] --
  ++(2.5,0)  -- 
  ++(0,1.4) --
  ++(-2.5,0) --
  cycle};
\node (uu) at (0,-0.2) {$\tilde{U}$};
\node (ii) at (3,-0.25) {$\tilde{I}$};
\node (ww) at (6,-0.2) {$\tilde{W}$};
\node (br0) at (3,2.5) {$\bullet$};
\node (br1) at (3,1.5) {$\bullet$};
\node (brh) at (0.8,0) {$\bullet$};
\node (bri) at (2.2,0) {};
\node (r0) at (3.2,2.5) {$0$};
\node (r1) at (3.2,1.65) {$1$};
\node (rh) at (1,0) {$h$};
\node (bri1) at (2.5,0) {$\bullet$};
\node (bri2) at (2.8,0) {$\bullet$};
\node (brid) at (3.2,0) {$\cdots$};
\node (bri3) at (3.6,0) {$\bullet$};
\draw (-0.76,0.71) -- (3,1.5) -- (3,2.5) -- (3,1.5) -- (6.94,0.69);
\draw (1.2,-0.5) -- (3,1.5);
\draw (4.9,-0.5) -- (3,1.5);
\draw (2.5,0) -- (3,1.5) --  (2.8,0) -- (3,1.5) --  (3.6,0) -- (3,1.5);
\end{tikzpicture} 
\end{center}
\caption{Graph $H$. Upon deletion of $0$ and $1$, the isolated vertices in $\tI$ contain all twins of the basepoint while $\tU$ is vertex set of a component outside $\tI$ which minimizes $\sum_{i\in \tU } v_i$.
\label{FigH}}
\end{figure}

For simplicity, we henceforth take $\Gamma=(X,R_1)$ with unweighted distribution diagram $H=H_1$ in some symmetric association scheme  $(X,\cR)$. We assume throughout the remainder of
Section \ref{Sec:prelim} that $\Gamma$ itself is a connected graph.
We will compare the graphs $\Gamma_a := \Gamma \setminus a^\bot$ and $H' :=H \setminus \{0,1\}$ and show that, with known exceptions, one is connected if and only if the other is connected. One direction is straightforward.

\begin{proposition}
\label{PH'disconn}
If $H'$ is not a connected graph, then for any $a\in X$, $\Gamma_a$ is also disconnected. If 
$i$ and $j$ are in distinct components of $H'$, then   $\Gamma_a$  contains no path from $R_i(a)$ to $R_j(a)$.
\end{proposition}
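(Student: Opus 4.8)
The plan is to read off the result directly from the projection homomorphism $\varphi_a:=\varphi_{a,1}$ supplied by Proposition~\ref{Pphi}. First I would record the vertex-set identity that drives everything: the vertices deleted in forming $\Gamma_a=\Gamma\setminus a^\bot$ are precisely the preimages under $\varphi_a$ of the two vertices $0$ and $1$ of $H$. Indeed $a^\bot=R_0(a)\cup R_1(a)$, so a vertex $b$ survives in $\Gamma_a$ exactly when $(a,b)\in R_\ell$ for some $\ell\notin\{0,1\}$, i.e.\ when $\varphi_a(b)\in\{2,\dots,d\}=V(H')$. Since both endpoints of any edge of $\Gamma_a$ lie in $V(\Gamma_a)$, the restriction of $\varphi_a$ to $\Gamma_a$ is a graph homomorphism into $H'$. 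This is where Proposition~\ref{Pphi} does all the work: it guarantees that edges map to edges (or loops) of $H$, and the vertex-set observation then confines those images to $H'$.

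With this in hand the second assertion is immediate. Suppose $i$ and $j$ lie in distinct components of $H'$ and, toward a contradiction, that $\Gamma_a$ contained a path from some $b\in R_i(a)$ to some $b'\in R_j(a)$. Applying the walk-to-walk half of Proposition~\ref{Pphi} to this path yields a walk in $H'$ from $\varphi_a(b)=i$ to $\varphi_a(b')=j$, contradicting that $i$ and $j$ are in different components. Hence no such path exists.

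The first assertion then follows from the second. If $H'$ is disconnected, I would pick indices $i,j\in\{2,\dots,d\}$ lying in distinct components of $H'$. Because the scheme is regular with valencies $v_i,v_j\ge 1$, the subconstituents $R_i(a)$ and $R_j(a)$ are nonempty, and since $i,j\notin\{0,1\}$ they lie entirely inside $\Gamma_a$. Choosing $b\in R_i(a)$ and $b'\in R_j(a)$, the second assertion shows there is no path from $b$ to $b'$ in $\Gamma_a$, so $\Gamma_a$ is disconnected.

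The argument is short precisely because Proposition~\ref{Pphi} furnishes the only nontrivial ingredient. The one point that genuinely needs care — the closest thing to an obstacle — is verifying that the restricted map really lands in $H'$ rather than merely in $H$: one must check that no edge of $\Gamma_a$ projects onto the deleted vertices $0$ or $1$, which is exactly the content of the identity $V(\Gamma_a)=\varphi_a^{-1}(\{2,\dots,d\})$. Once that identity is in place, everything else is bookkeeping.
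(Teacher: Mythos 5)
Your proof is correct and follows essentially the same route as the paper: both project a hypothetical path in $\Gamma_a$ through $\varphi_a$ (Proposition~\ref{Pphi}) and derive a contradiction with $i$ and $j$ lying in distinct components of $H'$ — your observation that the restricted map lands in $H'$ is just the contrapositive of the paper's remark that a walk in $H$ hitting $\{0,1\}$ forces a path vertex into $a^\bot$. Your explicit deduction of the first assertion from the second, via nonemptiness of the subconstituents, is a detail the paper leaves implicit but adds nothing essentially new.
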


\begin{proof}
Let $x\in R_i(a)$ and $y\in R_j(a)$ and suppose 
$ x=x_0,x_1,\ldots,x_\ell=y $
is a path in    $\Gamma_a$. Then
$ i = \varphi_a(x_0),  \varphi_a(x_1), \ldots,  \varphi_a(x_\ell) = j$
is a walk from $i$ to $j$ in $H$. Since $H'$ is disconnected, $\varphi_a(x_t)\le 1$ for some $t$ which forces
$x_t \in a^\bot$, a contradiction. $\Box$
\end{proof}

\begin{proposition}
\label{Pxya}
If $x$ and $y$ lie in distinct components of  $\Gamma_a$, then $\Gamma(x)\cap \Gamma(y) \subseteq \Gamma(a)$. $\Box$
\end{proposition}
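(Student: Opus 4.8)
The plan is to prove the statement pointwise via a short contradiction argument: I take an arbitrary common neighbor $z \in \Gamma(x) \cap \Gamma(y)$ and show directly that $z \in \Gamma(a)$. The guiding idea is that a common neighbor lying \emph{outside} the deleted set $a^\bot$ would supply a length-two walk $x \sim z \sim y$ inside $\Gamma_a$, contradicting the hypothesis that $x$ and $y$ lie in distinct components of $\Gamma_a$.

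First I would record that $x$ and $y$ are vertices of $\Gamma_a = \Gamma \setminus a^\bot$, so neither lies in $a^\bot = \{a\} \cup \Gamma(a)$. Now fix $z \in \Gamma(x) \cap \Gamma(y)$ and suppose toward a contradiction that $z \notin \Gamma(a)$. I split into two cases according to whether $z$ is the basepoint. If $z = a$, then $a \in \Gamma(x)$ forces $x \in \Gamma(a) \subseteq a^\bot$, contradicting $x \in V(\Gamma_a)$; hence $z \neq a$.

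With $z \neq a$ and $z \notin \Gamma(a)$, we have $z \notin a^\bot$, so $z$ is itself a vertex of $\Gamma_a$. Because $x$, $z$, and $y$ all survive the deletion of $a^\bot$, the edges $xz$ and $zy$ of $\Gamma$ remain edges of $\Gamma_a$, so $x \sim z \sim y$ is a walk in $\Gamma_a$ joining $x$ to $y$. This places $x$ and $y$ in a common component of $\Gamma_a$, the desired contradiction. Therefore every $z \in \Gamma(x) \cap \Gamma(y)$ satisfies $z \in \Gamma(a)$, i.e., $\Gamma(x)\cap \Gamma(y) \subseteq \Gamma(a)$.

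I expect no genuine obstacle here: once the membership bookkeeping is sorted out, the core is a one-step path construction. The only point requiring care is the edge case $z = a$, which must be excluded separately—$a$ is removed from $\Gamma_a$ but is not itself one of the deleted neighbors in $\Gamma(a)$—and disposing of it first lets the main path argument proceed cleanly.
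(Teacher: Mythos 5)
Your proof is correct, and it is precisely the argument the paper leaves implicit (the proposition is stated with only a $\Box$, as the authors regard it as immediate): a common neighbor $z \notin \Gamma(a)$ is either $a$ itself, forcing $x \in a^\bot$, or a surviving vertex giving the walk $x \sim z \sim y$ inside $\Gamma_a$, and both contradict the hypothesis. Your explicit handling of the edge case $z = a$ is the right bookkeeping; nothing further is needed.
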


For $\tU \subseteq \{0,1,\ldots, d\}$, note that $| \varphi_a^{-1}(\tU ) | = \sum_{i\in \tU } v_i$.
We now assume that $H'$ is disconnected and we define a decomposition of its vertex set. Let
$$ \tI = \{ i > 0 \mid  p_{11}^i = p_{11}^0 \} . $$
Now the set $\{2,\ldots,d\} \setminus \tI$ decomposes naturally into the vertex sets of the connected components of $H'$, excluding the isolated vertices in $\tI$. Let $\tU$ be the vertex set of some non-singleton
component such that $| \varphi_a^{-1}(\tU ) |$ is minimized. Let $\tW = \{2,\ldots, d\} \setminus \left( \tI \cup \tU \right)$ as depicted in Figure \ref{FigH}. For 
$x\in X$, set 
$$I_x = \varphi_x^{-1}(\tI ),  \qquad U_x = \varphi_x^{-1}(\tU ),   \qquad W_x = \varphi_x^{-1}(\tW ) $$
and note that $| I_x |$, $|U_x|$, and $|W_x |$ are independent of the choice of $x\in X$. Observe that $x$ and $y$ are twins if and only if $y \in I_x$.  While our basepoint will vary in what follows, our choice of $\tU$, $\tW$ and 
$\tI$ will remain fixed for this connected graph $\Gamma$.

\begin{lemma}  \label{LUdist2}
If $\tW \neq \emptyset$, then for every $u\in U_x$, $d_\Gamma(x,u)=2$.
\end{lemma}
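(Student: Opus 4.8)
The plan is to pass from $\Gamma$ to its distribution diagram $H$ and then argue by contradiction using the projection $\varphi$. First I would record the easy reduction: by \lref{LdiamGamma}, for $u\in U_x$ with $\varphi_x(u)=j\in\tU$ we have $d_\Gamma(x,u)=d_H(0,j)$, and since $j\in\{2,\dots,d\}$ this distance is at least $2$. Moreover, because $0$ is the identity relation it is adjacent in $H$ only to $1$, so $d_H(0,j)=2$ exactly when $j$ is adjacent to $1$ in $H$. Thus the lemma is equivalent to the purely diagram-theoretic statement that, when $\tW\neq\emptyset$, every vertex of the minimal component $\tU$ is adjacent to $1$ in $H$; equivalently, no vertex of $U_x$ lies at distance $\ge 3$ from $x$.

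For the contradiction, suppose some vertex of $U_x$ is at distance $\ge 3$ from $x$. Since $H$ is connected while $0$ hangs only off $1$, each component of $H'$ contains a vertex of $H(1)$, so $\tU$ already has a vertex adjacent to $1$; as consecutive vertices along a path inside $\tU$ change their distance to $0$ by at most one, I may choose $u\in U_x$ with $d_\Gamma(x,u)=3$ exactly. I would then switch the basepoint to $u$. Because the relation $\bigcup_{j\in\tU}R_j$ is symmetric, $u\in U_x$ forces $x\in U_u$. The engine is \pref{Pxya}: if I can exhibit a vertex $w$ lying in a $\Gamma_u$-component distinct from that of $x$ and having a common neighbour with $x$, then \pref{Pxya} gives $\Gamma(x)\cap\Gamma(w)\subseteq\Gamma(u)$; but every neighbour $c$ of $x$ satisfies $d_\Gamma(u,c)\ge d_\Gamma(u,x)-1=2$, so $c\notin\Gamma(u)$, and a common neighbour of $x$ and $w$ would contradict the inclusion. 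The natural candidate is a $\tW$-entry: since $\tW\neq\emptyset$, there is $w\in W_x$ with $d_\Gamma(x,w)=2$, which does share a neighbour with $x$; and \pref{PH'disconn} separates $x$ from $w$ in $\Gamma_u$ as soon as $w\in W_u$.

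The hard part will be precisely this last compatibility: guaranteeing that some distance-$2$ $\tW$-entry seen from $x$ is still a $\tW$-vertex seen from $u$, i.e.\ that $w\in W_u$ rather than $w\in U_u$. A distance count gives $d_\Gamma(u,w)\ge 3$, so $w$ is neither a twin of $u$ nor in $u^\bot$, whence $\varphi_u(w)$ lies in a non-singleton component of $H'$ at distance $\ge 3$ from $0$; the only remaining possibility to exclude is $w\in U_u$. This is where the minimality of $\sum_{i\in\tU}v_i$ must enter: I expect to compare $U_u$ with $W_x$, exploiting that $|U_u|=|U_x|$ and $|W_x|$ are independent of the basepoint together with the walk-lifting of \pref{Pphi}, to conclude that not all $\tW$-entries can be absorbed into the minimal part $U_u$ (which already contains $x$ together with the $\tU$-side of $u$). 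Turning this anticipated imbalance into a clean count — or, alternatively, deducing $w\in W_u$ directly from a product rule such as $\bigl(\bigcup_{j\in\tU}R_j\bigr)\circ\bigl(\bigcup_{p\in\tW}R_p\bigr)\subseteq\bigcup_{p\in\tW}R_p$ — is the step I anticipate requiring the most care.
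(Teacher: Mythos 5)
Your framework is sound up to the decisive step, but that step---which you yourself flag as ``the hard part''---is genuinely missing, and neither of the two routes you sketch for it works as stated. Your engine needs a vertex $w$ with two properties \emph{simultaneously}: $d_\Gamma(x,w)=2$ (so that $x$ and $w$ share a neighbour) and $w\in W_u$ (so that Proposition~\ref{Pxya} applies at basepoint $u$). The counting you propose ($|U_u|=|U_x|\le |W_x|$ by minimality of $\tU$, while $x\in U_u$) only yields \emph{some} $w_0\in W_x\setminus U_u$, with no control whatsoever on $d_\Gamma(x,w_0)$; one can indeed upgrade this to $w_0\in W_u$ (in effect Proposition~\ref{PW_b}), but if $d_\Gamma(x,w_0)\ge 3$ your engine produces no contradiction, since $\Gamma(x)\cap\Gamma(w_0)=\emptyset\subseteq\Gamma(u)$ is perfectly consistent. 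Your alternative, the product rule $\bigl(\bigcup_{j\in\tU}R_j\bigr)\circ\bigl(\bigcup_{p\in\tW}R_p\bigr)\subseteq\bigcup_{p\in\tW}R_p$, is nowhere established and is at least as strong as the lemma itself (after Theorem~\ref{TWempty} it is only vacuously true, $\tW$ being empty), so you cannot lean on it. A smaller flaw of the same kind: ``a distance count gives $d_\Gamma(u,w)\ge 3$'' is not a distance count---the triangle inequality gives only $d_\Gamma(u,w)\ge 1$; the true justification needs the homomorphism property (no edge of $\Gamma$ joins $U_x$ to $W_x$) together with Proposition~\ref{Pxya} applied at basepoint $x$.

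The idea you are missing, which is the heart of the paper's short proof, is what the contradiction hypothesis $\Gamma(x)\cap\Gamma(u)=\emptyset$ gives you for free: every neighbour of $u$ projects under $\varphi_x$ into $\tU\cup\{1\}$ (a $\tU$-vertex has no $H$-neighbours in $\tW\cup\tI\cup\{0\}$), so $\Gamma(u)\subseteq U_x\cup\Gamma(x)$, and the hypothesis then forces $u^\bot\subseteq U_x$. Hence $u^\bot$ is disjoint from $W_x\cup\Gamma(x)\cup\{x\}$, so for \emph{every} $w\in W_x$ the walk obtained by lifting (Proposition~\ref{Pphi}) an $H$-path from $\varphi_x(w)$ through $\tW$, then $1$, then $0$, is an $xw$-path in $\Gamma$ avoiding $u^\bot$. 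Projecting from basepoint $u$, and using $x\in U_u$ (your symmetry observation), every such $w$ lies in $U_u$; thus $W_x\cup\{x\}\subseteq U_u$, and $|U_u|\ge |W_x|+1$ contradicts $|U_u|\le|W_x|$ outright---no distance-two witness, no exact choice $d_\Gamma(x,u)=3$, and no appeal to Proposition~\ref{Pxya} are needed. Note that this same observation is also the only apparent way to repair your own scheme (if every distance-two element of $W_x$ lay in $U_u$, then, because paths inside $W_x\cup\Gamma(x)\cup\{x\}$ avoid $u^\bot$, all of $W_x$ would lie in $U_u$), so the repair already contains the paper's entire argument and renders your extra machinery superfluous.
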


\begin{proof}
By way of contradiction, assume $u \in U_x$ with $\Gamma(x) \cap \Gamma(u) = \emptyset$. For any
$w\in W_x$, we note that $\Gamma$ contains an $xw$-path which does not pass through $u^\bot$.
So $x$ and $w$ lie in the same connected component of $\Gamma_u$. But if $(x,u)\in R_h$ then
$h\in \tU$ so $x\in U_u$ by symmetry. It follows that $W_x \cup \{x\} \subseteq U_u$. But this contradicts
$| \varphi_u^{-1}(\tU ) | \le | \varphi_x^{-1}(\tW ) |$. $\Box$
\end{proof}

\subsection{Comparing the view from multiple basepoints}

\begin{proposition}  \label{PW_b}
For any $a\in X$ and any $b\in U_a$, we have $W_a \cap I_b = \emptyset$.
\end{proposition}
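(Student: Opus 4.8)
\emph{The plan is to} argue by contradiction using the projection map $\varphi_a$ of Proposition~\ref{Pphi}. If $W_a = \emptyset$ the claim is vacuous, so I would suppose there exists $w \in W_a \cap I_b$; in particular $\tW \neq \emptyset$. Since $w \in I_b$, the vertices $b$ and $w$ are twins, so $\Gamma(b) = \Gamma(w)$. Writing $h = \varphi_a(b) \in \tU$ and $k = \varphi_a(w) \in \tW$, the idea is to exploit the fact that $b$ and $w$ project into \emph{different} components of $H'$ in order to squeeze their (common) neighborhood into $\Gamma(a)$.

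The key step is to show that every neighbor of $b$ lies in $\Gamma(a) \cup U_a$. Since $\varphi_a$ is a graph homomorphism, any $c \in \Gamma(b)$ satisfies $\varphi_a(c) \in N_H(h)$, so it suffices to prove $N_H(h) \subseteq \{1\} \cup \tU$. First, because $h \in \tU \subseteq \{2,\dots,d\}$, a direct computation of $p_{1h}^0$ and $p_{1,0}^h$ shows that $h$ is adjacent to $0$ in $H$ only when $h = 1$; hence $h$ is not adjacent to $0$. Second, every $H'$-neighbor of $h$ lies in the connected component of $h$ in $H'$, which is exactly the full vertex set $\tU$; since $\tU$, $\tW$ and $\tI$ are pairwise disjoint, this forces $N_H(h) \setminus \{1\} \subseteq \tU$. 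Pulling back through $\varphi_a$ then gives $\Gamma(b) \subseteq \varphi_a^{-1}(\{1\} \cup \tU) = \Gamma(a) \cup U_a$. The identical argument applied to $w$ with its component $\tW$ yields $\Gamma(w) \subseteq \Gamma(a) \cup W_a$.

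I would then combine these inclusions. Because $b$ and $w$ are twins, $\Gamma(b) = \Gamma(w)$, so
$$ \Gamma(b) \subseteq (\Gamma(a) \cup U_a) \cap (\Gamma(a) \cup W_a) = \Gamma(a) \cup (U_a \cap W_a) = \Gamma(a), $$
where the last equality uses $U_a \cap W_a = \emptyset$ (as $\tU \cap \tW = \emptyset$). Since $\Gamma$ is regular of valency $v_1$, we have $|\Gamma(b)| = |\Gamma(a)| = v_1$, so the inclusion $\Gamma(b) \subseteq \Gamma(a)$ is actually an equality. Thus $a$ and $b$ are twins, i.e.\ $b \in I_a$, giving $\varphi_a(b) \in \tI$; this contradicts $b \in U_a$, which gives $\varphi_a(b) \in \tU$, since $\tI \cap \tU = \emptyset$.

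I expect the only delicate point to be the neighborhood-confinement step $N_H(h) \subseteq \{1\} \cup \tU$: one must separately rule out adjacency of $h$ to the deleted vertex $0$ (handled by the $p_{1h}^0$, $p_{1,0}^h$ computation) and use that $\tU$ is a full connected component of $H'$ to keep the remaining neighbors inside $\tU$. Once this is in hand, the twin hypothesis together with the regularity of $\Gamma$ closes the argument at once, and Lemma~\ref{LUdist2} is not even needed.
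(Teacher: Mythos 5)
Your proof is correct and takes essentially the same route as the paper's: both arguments confine the common neighborhood $\Gamma(b)=\Gamma(w)$ via the projection $\varphi_a$ and the component structure of $H\setminus\{0,1\}$ to $\Gamma(a)\cup U_a$ (resp.\ $\Gamma(a)\cup W_a$), and then use regularity of $\Gamma$ to rule out the forced conclusion that a vertex of $U_a$ is a twin of $a$. The paper's version is merely terser, phrasing the same facts as ``$\Gamma(x)\cap U_a\neq\emptyset$, hence $x\notin W_a$'' rather than intersecting the two containments and landing the contradiction at $b$; your write-up also makes explicit the small steps (non-adjacency of $h$ to $0$ in $H$, and $\tI$, $\tU$, $\tW$ being unions of components) that the paper leaves implicit.
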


\begin{proof}
If $x$ and $b$ are twins, then $x$ cannot be a twin of $a$ since $b$ is not a twin of $a$. So 
 $\Gamma(x) = \Gamma(b) \subseteq U_a \cup \Gamma(a)$ gives $\Gamma(x) \cap U_a \neq \emptyset$.
 So $x\not\in W_a$.  $\Box$
\end{proof}

Now fix $a\in X$ and choose $b\in U_a$. 
Consider the component $\Delta$ of $\Gamma_b$ containing $a$.
Since $b$ and $a$ are not twins, some element of $\Gamma(a)$ is a vertex of $\Delta$ and hence $\Delta$ contains vertices in $W_a$ unless $\tW = \emptyset$. Let $Z_a = V\Delta \cap W_a$ and let $Y_a= W_a \setminus Z_a$. This vertex decomposition is depicted in Figure \ref{FigGamma}. 
Since $b\in U_a$, we have $a\in U_b$ and, since $\Delta$ is connected, $Z_a \subseteq U_b$.

In the next two results, we proceed under the hypotheses stated at the
beginning of Section \ref{Sec:IUW} and assume that vertices  $a$ and $b \in U_a$ have been chosen and the 
sets $Y_a$ and $Z_a$ are defined as above relative to this pair of vertices.

\begin{lemma} \label{LpathYa}
Let $w=(v_0,v_1,\ldots,v_\ell)$ be a walk in $\Gamma$ with $v_0\in Y_a$ and $v_\ell$ lying some 
other component of $\Gamma_a$. Let $s \in \{1,\ldots,\ell\}$ be the smallest subscript with 
$v_s\not\in Y_a$. Then $v_s \in \Gamma(a)$. $\Box$
\end{lemma}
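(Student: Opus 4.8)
The plan is to follow the walk through the projection $\varphi_a$ and show that the first time it leaves $Y_a$ it can only enter $R_1(a)=\Gamma(a)$. Put $h=\varphi_a(v_{s-1})$. Minimality of $s$ gives $v_{s-1}\in Y_a\subseteq W_a$, so $h\in\tW$. The structural fact I would use is that $\tW$ is a union of connected components of $H'=H\setminus\{0,1\}$: by construction $\tW=\{2,\dots,d\}\setminus(\tI\cup\tU)$ is exactly the union of all components of $H'$ other than $\tU$ and the isolated vertices of $\tI$. Consequently the $H$-neighbours of $h$ lie in $h$'s own $H'$-component (hence in $\tW$) together with possibly $0$ or $1$, so $N_H[h]\subseteq\{0,1\}\cup\tW$. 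Since $v_{s-1}\sim v_s$ in $\Gamma$ and $\varphi_a$ is a graph homomorphism (Proposition~\ref{Pphi}), $\varphi_a(v_s)\in N_H[h]$, giving three cases to treat.

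Two of them are quick. If $\varphi_a(v_s)=1$ then $v_s\in R_1(a)=\Gamma(a)$, which is the conclusion. If $\varphi_a(v_s)=0$ then $v_s=a$, so $v_{s-1}\in\Gamma(a)$ and $h=\varphi_a(v_{s-1})=1$, contradicting $h\in\tW$; thus this case is impossible. The whole argument therefore reduces to excluding $\varphi_a(v_s)\in\tW$, and this is where the real work lies. In that case $v_s\in W_a$; since $s$ is the first index with $v_s\notin Y_a$ and $W_a$ is the disjoint union of $Y_a$ and $Z_a$, we must have $v_s\in Z_a\subseteq V\Delta$, while $v_{s-1}\in Y_a$ gives $v_{s-1}\notin V\Delta$.

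To finish I would exploit that $\Delta$ is a connected component of $\Gamma_b=\Gamma\setminus b^\bot$. As $v_{s-1}\sim v_s$ with $v_s\in\Delta$ but $v_{s-1}\notin\Delta$, the vertex $v_{s-1}$ must lie in $b^\bot$; and since $\varphi_a(v_{s-1})=h\in\tW$ whereas $\varphi_a(b)\in\tU$ and $\tU\cap\tW=\emptyset$, we have $v_{s-1}\neq b$, hence $v_{s-1}\in\Gamma(b)$. Applying $\varphi_a$ to the edge $b\,v_{s-1}$ and writing $g=\varphi_a(b)\in\tU$, Proposition~\ref{Pphi} makes $gh$ an edge of $H$, and it is a genuine (non-loop) edge because $g\neq h$. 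As $g,h\in\{2,\dots,d\}$ avoid both $0$ and $1$, this edge survives in $H'$ and places $g\in\tU$ and $h\in\tW$ in a common component of $H'$ --- contradicting that $\tU$ and $\tW$ are distinct components. This rules out $\varphi_a(v_s)\in\tW$, forcing $\varphi_a(v_s)=1$, i.e.\ $v_s\in\Gamma(a)$. The one delicate step is this last one: recognising that crossing into the component $\Delta$ from outside pushes the predecessor $v_{s-1}$ into $b^\bot=\{b\}\cup\Gamma(b)$, after which $\varphi_a$ converts the edge $b\,v_{s-1}$ into an $H'$-edge joining the two distinct components $\tU$ and $\tW$; everything else is bookkeeping with $\varphi_a$ and the definitions of $Y_a$, $Z_a$, and $\tW$.
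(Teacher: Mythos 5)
Your proof is correct, and it is essentially the argument the paper intends: the lemma is stated there with no proof at all (deemed immediate from the construction), and what you wrote fills in exactly the omitted details using the paper's own machinery — the projection homomorphism $\varphi_a$ of Proposition \ref{Pphi}, the fact that $\tW$ is a union of components of $H'$, and the fact that an edge entering the component $\Delta$ of $\Gamma_b$ forces its other endpoint into $b^\bot$, which meets $W_a$ trivially (equivalently, $\Gamma(b)\subseteq \Gamma(a)\cup U_a$, the fact the paper records and uses in Proposition \ref{PW_b} and Lemma \ref{LYZsplit}). In particular, your edge-projection contradiction in the case $\varphi_a(v_s)\in\tW$ correctly rules out a first step from $Y_a$ into $Z_a$, which is the only point requiring real work.
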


\begin{lemma} \label{LYZsplit}
For $0\le i \le d$, $R_i(a) \cap Y_a\neq \emptyset$ implies $R_i(a) \cap Z_a \neq \emptyset$. So 
no subconstituent of $\Gamma$ with respect to $a$ is entirely contained in $Y_a$.
\end{lemma}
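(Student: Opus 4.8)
The plan is to prove the stated implication directly. Since $Y_a\subseteq W_a=\varphi_a^{-1}(\tW)$, a subconstituent $R_i(a)$ can meet $Y_a$ only when $i\in\tW$ (recall $R_i(a)=\varphi_a^{-1}(i)$), so it suffices to show that for every $i\in\tW$ the set $R_i(a)$ meets $Z_a$. Because $Z_a=V\Delta\cap W_a$ and $R_i(a)\subseteq W_a$ for such $i$, this amounts to proving $R_i(a)\cap V\Delta\neq\emptyset$, i.e.\ that every $\tW$-subconstituent reaches the component $\Delta$ of $\Gamma_b$ containing $a$. The gateway into $\Delta$ will be the neighbor $c\in\Gamma(a)\cap V\Delta$ guaranteed by the discussion preceding the lemma, which exists precisely because $a$ and $b$ are not twins; note $\varphi_a(c)=1$.

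Two observations drive the argument. First, I claim $W_a\cap b^\bot=\emptyset$. Write $b\in R_k(a)$ with $k\in\tU$. Any $v\in R_i(a)$ with $i\in\tW$ lies in a component of $H'$ different from that of $k$, so $i$ and $k$ are non-adjacent in $H$ and hence $p_{1i}^k=p_{1k}^i=0$. By commutativity $p_{i1}^k=p_{1i}^k=0$, which forbids a triangle $(a,v)\in R_i,\ (v,b)\in R_1,\ (a,b)\in R_k$ and so gives $v\notin\Gamma(b)$; and $i\neq k$ gives $v\neq b$. Thus $W_a\subseteq V(\Gamma_b)$. Second, in $H=H_1$ the vertex $0$ is adjacent only to $1$, since $p_{10}^j=p_{1j}^0=0$ for $j\neq 1$. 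Consequently, letting $\tW_s$ be the connected component of $H'$ containing a given $i\in\tW$, the fact that $H$ is connected (Lemma~\ref{LdiamGamma}) while $\tW_s$ is severed from the rest of $H$ by $\{0,1\}$, together with $0$ attaching only through $1$, forces the existence of a vertex $j_s\in\tW_s$ adjacent to $1$ in $H$.

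With these in hand I would concatenate the edge $(1,j_s)$ in $H$ with a walk in $H'$ from $j_s$ to $i$ inside $\tW_s$, producing a walk $(1,j_s,\ldots,i)$ in $H$ from $1$ to $i$. Since $\varphi_a(c)=1$, Proposition~\ref{Pphi} lifts this walk, starting at $c$, to a walk $(c=b_0,b_1,\ldots,b_\ell)$ in $\Gamma$ with $\varphi_a(b_\ell)=i$. Every $b_t$ with $t\ge 1$ has color in $\tW_s\subseteq\tW$, hence lies in $W_a$ and so avoids $b^\bot$ by the first observation, while $b_0=c\in V\Delta$. The entire walk therefore lives in $\Gamma_b$, and being connected and beginning in $\Delta$ it stays in $\Delta$; its endpoint $b_\ell$ lands in $R_i(a)\cap V\Delta=R_i(a)\cap Z_a$, which is thus nonempty. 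This establishes the implication for all $i\in\tW$; for the remaining colors $R_i(a)$ is disjoint from $W_a\supseteq Y_a$, so the hypothesis is vacuous. The closing sentence of the lemma then follows, since each $\tW$-subconstituent now meets $Z_a$ and so cannot be contained in the disjoint set $Y_a$.

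The one genuine obstacle is that walk-lifting confined to $H'$ moves only within a single component of $H'$, and so cannot on its own carry $\Delta$ into a $\tW$-component other than one already known to meet it. The device that overcomes this is routing through the vertex $1$: because $0$ attaches only to $1$, every $\tW$-component must hang off $1$, and the neighbor $c$ of $a$ sitting inside $\Delta$ supplies a concrete lift of $1$ through which each such component can be entered, while the disjointness $W_a\cap b^\bot=\emptyset$ keeps the lifted walk from escaping $\Delta$. Verifying that each $\tW$-component really does touch $1$ (rather than only $0$), and that the lifted walk never re-enters $b^\bot$, are the two points that require the care described above.
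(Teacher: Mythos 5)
Your proof is correct, and it reaches the conclusion by a route that differs meaningfully from the paper's. Both arguments ultimately rest on the same mechanism: produce a walk in $H$ from $1$ to $i$ whose internal vertices all lie in $\tW$, lift it via Proposition~\ref{Pphi} starting at a neighbor of $a$ that lies outside $b^\bot$, and use disjointness of $W_a$ from $b^\bot$ to conclude the lifted walk never leaves $\Delta$, so its endpoint lands in $R_i(a)\cap Z_a$. Where you differ is in how the $H$-walk is obtained. The paper starts from an actual vertex $y\in Y_a$, takes a geodesic from $y$ to $a$, invokes Lemma~\ref{LpathYa} to guarantee that the intermediate vertices lie in $Y_a$ (hence project into $\tW$), and then projects this geodesic into $H$; the walk is thus manufactured from the hypothesis $R_i(a)\cap Y_a\neq\emptyset$. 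You instead build the walk abstractly inside $H$: since $H$ is connected and $0$ is pendant at $1$ (as $p_{1j}^0+p_{10}^j>0$ only for $j=1$), every component of $H'$ contained in $\tW$ must contain a vertex adjacent to $1$, which yields the walk $(1,j_s,\ldots,i)$ with no reference to $Y_a$ at all. This buys you a strictly stronger, unconditional conclusion --- $R_i(a)\cap Z_a\neq\emptyset$ for \emph{every} $i\in\tW$, whether or not $R_i(a)$ meets $Y_a$ --- and it makes Lemma~\ref{LpathYa} unnecessary for this step; the paper's version needs the hypothesis but requires no analysis of the structure of $H$ near the vertex $0$. Your two auxiliary claims are both sound: $W_a\cap b^\bot=\emptyset$ follows, as you argue, from non-adjacency in $H$ of indices lying in distinct components of $H'$ (the paper encodes essentially the same fact as the containment $\Gamma(b)\subseteq\Gamma(a)\cup U_a$), and the existence of $c\in\Gamma(a)\cap V\Delta$ follows from $a$ and $b$ not being twins, exactly as in the discussion preceding the lemma.
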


\begin{proof}
Let $y \in Y_a$ and consider a shortest $ya$-path $\pi$ in $\Gamma$, of length $\ell$ say, and label its 
vertices as follows: $\pi=(y=v_\ell,v_{\ell-1},\ldots,v_1,v_{0} = a)$.
Then, by Lemma \ref{LpathYa}, $v_s \in Y_a$ for  $1 < s \le \ell$.  Consider $j_s=\varphi_a(v_s)$, $0\le s \le \ell$, and assume $j_\ell=i$. Then we have $p_{1,j_{s+1}}^{j_s} >0$ for $0\le s < \ell$.  Note $j_0=0$ and $j_1=1$. Now we lift the walk $(j_0,\ldots,j_\ell)$ in $H$ to a different walk in $\Gamma$. Since $a$ and $b$ are not twins,
we may choose $v'_1 \in \Gamma(a) \setminus \Gamma(b)$.
Since $p_{1j_2}^{1}>0$, there exists $v'_2\in R_{j_2}(a)$ with $v'_2$ adjacenct to  $v'_1$ in $\Gamma$. Continuing in this manner, we
may construct a walk $\pi'=(a=v'_0,v'_1,\ldots, v'_\ell)$ in $\Gamma$ with $\varphi_a(v'_s)=j_s$. 
Since $\Gamma(b) \subseteq \Gamma(a) \cup U_a$, none of the vertices $v'_s$ lie in 
$\Gamma(b)$, so the entire walk $\pi'$ is contained in one component of $\Gamma_b$. By definition of 
$Z_a$, we then have $v'_\ell \in Z_a \cap R_i(a)$. $\Box$

\end{proof}

\begin{figure}
\begin{center}
\begin{tikzpicture}
\draw[thick,black]
  (0,0) {[rounded corners=15pt] --  ++(2.25,0)  --   ++(0,5) --  ++(-2.25,0) --  cycle};
  \draw[thick,black]
  (2.5,0) {[rounded corners=15pt] --  ++(1.5,0)  --   ++(0,5) --  ++(-1.5,0) --  cycle};
  \draw[thick,black]
  (4.25,0) {[rounded corners=15pt] --  ++(2.25,0)  --   ++(0,5) --  ++(-2.25,0) --  cycle};
  \draw[thick,black]
  (0,5.25) {[rounded corners=15pt] --  ++(6.5,0)  --   ++(0,1) --  ++(-6.5,0) --  cycle};  
 \draw[thick,black]  (4.25,4) -- (6.5,2);
 \draw[thick,black]  (0.9,0) -- (0.9,5);
 \draw[thick,black]  (0.9,5.25) -- (0.9,6.25);
\node (bra) at (3.25,6.6) {$\bullet$};  \node (ra) at (3.5,6.6) {$a$};
\node (brb) at (0.25,2.5) {$\bullet$}; \node (rb) at (0.5,2.5) {$b$};
\node (li) at (3.2,5.75) {$\Gamma(a)$};
\node (lu) at (1.35,0.75) {$U_a$};
\node (li) at (3.2,0.75) {$I_a$};
\node (lw) at (5.45,0.75) {$W_a$};
\node (ly) at (5,2.75) {$Y_a$};
\node (lz) at (5.5,3.5) {$Z_a$};
\node (lbb1) at (0.55,3.5) {$b^\bot$};
\node (lbb2) at (0.55,5.75) {$b^\bot$};
\fill[gray,opacity=.5]   
 (0.9,5) {[rounded corners=15pt] --   (2.25,5)  -- (2.25,3.5) -- (0.9,3.5)} -- cycle; 
  \fill[gray,opacity=.5]  
  (4.25,4) {[rounded corners=15pt] --  (4.25,5)  -- (6.5,5)}  -- (6.5,2) --  cycle;
\fill[gray,opacity=.5]   
 (0.9,5.25) {[rounded corners=15pt] --   (6.5,5.25)  --  (6.5,6.25)} -- (0.9,6.25) -- cycle;
\fill[gray,opacity=.5]   
 (2.5,0) {[rounded corners=15pt] --   ++(1.5,0)  --   ++(0,5) --  ++(-1.5,0) --  cycle};
\end{tikzpicture} 
\end{center}
\caption{This diagram depicts $\Gamma$ as decomposed relative to basepoint $a$. In $\Gamma_b$,
vertex $a$ belongs to component $\Delta$, whose vertex set is indicated by the shaded region. 
The set $W_a$ splits into $Z_a$ and $Y_a$ according to membership in $V\Delta$.
\label{FigGamma}}
\end{figure}
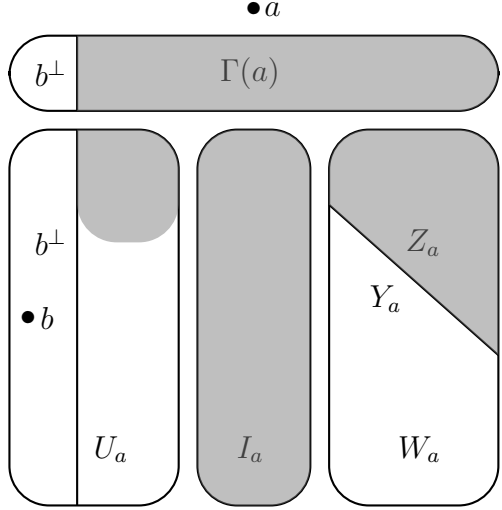

\begin{lemma} \label{LWa-empty}
If $\tW \neq \emptyset$, then $\Gamma$ has diameter two; i.e., $p_{11}^i > 0$ for all $i>1$.
\end{lemma}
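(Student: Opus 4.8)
The plan is to prove the contrapositive by a single reduction together with a moving--basepoint argument. By Lemma~\ref{LdiamGamma} we have $d_\Gamma(a,b)=d_H(0,j)$ for $(a,b)\in R_j$, and $d_\Gamma(a,b)=2$ precisely when $p_{11}^j>0$; hence ``diameter two'' is exactly the assertion that no index $i^*>1$ has $d_H(0,i^*)\ge 3$. So assume $\tW\neq\emptyset$ and, for contradiction, that such an $i^*$ exists. First I would locate $i^*$: it cannot lie in $\tI$, since twins share a neighbourhood and so sit at distance two; and it cannot lie in $\tU$, since Lemma~\ref{LUdist2} (which uses $\tW\neq\emptyset$) places every vertex of $U_x$ at distance two from $x$. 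Thus $i^*$ belongs to a non-singleton component $C^*$ of $H'$ with $C^*\subseteq\tW$ and $C^*\neq\tU$.

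Next, fix $a\in X$ and $w\in R_{i^*}(a)$, so $d_\Gamma(a,w)\ge 3$ and in particular $\Gamma(a)\cap\Gamma(w)=\emptyset$. Since $d_H(0,i^*)\ge 3$, the vertex $i^*$ is non-adjacent to $0$ and $1$ in $H$, so (applying $\varphi_a$ to the edges at $w$, via Proposition~\ref{Pphi}) every $H$-neighbour of $i^*$ lies in $C^*$, giving $w^\bot\subseteq\varphi_a^{-1}(C^*)$. Let $\Theta$ be the component of $\Gamma_w=\Gamma\setminus w^\bot$ containing $a$. By Proposition~\ref{PH'disconn} with basepoint $w$, all of $\Theta$ projects under $\varphi_w$ into the single $H'$-component of $\varphi_w(a)=i^*$, so $\Theta\subseteq\varphi_w^{-1}(C^*)$. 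For the reverse containment I would use the converse half of Proposition~\ref{Pphi}: for any $v\in\varphi_a^{-1}(C_k)$ with $C_k$ any component of $H'$ other than $C^*$ (or any twin index, or $0,1$), the set $\{0,1\}\cup C_k$ is connected in $H$, and lifting a walk in $H$ from $\varphi_a(v)$ to $0$ \emph{based at $v$} produces a walk in $\Gamma$ from $v$ ending in $R_0(a)=\{a\}$ whose vertices all lie in $\varphi_a^{-1}(\{0,1\}\cup C_k)$, hence avoid $w^\bot\subseteq\varphi_a^{-1}(C^*)$. (The point is that $R_0(a)$ is a singleton, so the lift genuinely reaches $a$.) This yields
$$ X\setminus\varphi_a^{-1}(C^*)\ \subseteq\ \Theta\ \subseteq\ \varphi_w^{-1}(C^*). $$

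From $|\varphi_w^{-1}(C^*)|=|\varphi_a^{-1}(C^*)|=:n^*$ and $|X|=v$ this forces $v-n^*\le n^*$, i.e.\ the far component occupies a strict majority $n^*\ge v/2$; in particular two distinct far components are impossible, so $C^*$ is the unique component of $H'$ carrying a far vertex, and every vertex outside $\varphi_a^{-1}(C^*)$ is at distance at most two from $a$. Running the same argument with the roles of $a$ and $w$ exchanged (both lie in $C^*$ relative to one another) gives $\varphi_a^{-1}(C^*)\cup\varphi_w^{-1}(C^*)=X$, and it places the minimal component on opposite sides: $U_a=\varphi_a^{-1}(\tU)\subseteq X\setminus\varphi_a^{-1}(C^*)\subseteq\varphi_w^{-1}(C^*)$ while $U_w=\varphi_w^{-1}(\tU)\subseteq\varphi_a^{-1}(C^*)$.

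The final step is to turn this configuration into a genuine contradiction using the minimality that defines $\tU$, whose existence as a second non-singleton component is exactly what $\tW\neq\emptyset$ guarantees; I expect this to be the main obstacle, since the size inequalities above are self-consistent and the collision must be extracted the way it is in Lemma~\ref{LUdist2}, by exhibiting a set forced into a minimal-sized class that is strictly too large (here one would track $U_a\subseteq\varphi_w^{-1}(C^*)$ against $U_w\cap\Theta=\emptyset$). As a parallel line of attack I would also pursue a direct route avoiding the global count: for $i\in\tW$, Lemma~\ref{LYZsplit} supplies $z\in R_i(a)\cap Z_a\subseteq U_b$, so $z$ has a common neighbour $c$ with $b$, and $c\in\Gamma(b)\subseteq\Gamma(a)\cup U_a$; the case $c\in\Gamma(a)$ gives $d_\Gamma(a,z)=2$ immediately, and the remaining case (all such common neighbours lying in $U_a$) is precisely the obstruction to rule out, presumably again by the minimality of $\tU$ applied to the component $\Delta$ of $\Gamma_b$, which satisfies $V\Delta\subseteq U_b$.
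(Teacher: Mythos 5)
You have not closed the proof: both of your routes halt at the decisive step, and in both places you point to ``minimality of $\tU$'' as the expected rescue, which is the wrong tool. The missing idea is Proposition~\ref{Pxya} (equivalently, Proposition~\ref{PH'disconn} read at basepoint $a$): since $z\in Z_a\subseteq W_a$ and $b\in U_a$ lie in \emph{distinct components of} $\Gamma_a$, every common neighbor of $z$ and $b$ must lie in $\Gamma(a)$. In your ``parallel'' direct route this kills the remaining case outright: a common neighbor $c\in U_a$ of $z$ and $b$ would make $zc$ an edge of $\Gamma_a$ joining $W_a$ to $U_a$, contradicting Proposition~\ref{PH'disconn}; no appeal to the minimality of $|\varphi^{-1}(\tU)|$ is needed or, as far as I can see, would suffice. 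With this one observation your second route becomes complete and is, in substance, exactly the paper's proof: the paper fixes $b\in U_a$, decomposes $V\Gamma_a=I_a\,\dot{\cup}\,U_a\,\dot{\cup}\,Z_a\,\dot{\cup}\,Y_a$, disposes of $I_a$ (twins) and $U_a$ (Lemma~\ref{LUdist2}), gets $d_\Gamma(x,b)=2$ for $x\in Z_a$ from $Z_a\subseteq U_b$ and then $\Gamma(x)\cap\Gamma(b)\subseteq\Gamma(a)$ from Proposition~\ref{Pxya}, and finally passes from $Z_a$ to $Y_a$ by Lemma~\ref{LYZsplit}, using that $p_{11}^i>0$ is a property of the whole subconstituent $R_i(a)$.

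Your first, counting route is sound as far as it goes --- the sandwich $X\setminus\varphi_a^{-1}(C^*)\subseteq\Theta\subseteq\varphi_w^{-1}(C^*)$ is correctly established from Propositions~\ref{Pphi} and \ref{PH'disconn}, including the observation that every component of $H'$ other than $C^*$, and every twin index, is joined to $1$ in $H$ --- but it cannot be finished by counting: the inclusion is equivalent to $\varphi_a^{-1}(C^*)\cup\varphi_w^{-1}(C^*)=X$, which is already symmetric in $a$ and $w$, so ``exchanging roles'' produces nothing new, and the resulting inequality $|\varphi_a^{-1}(C^*)|\ge |X|/2$, together with the minimality of $|\varphi^{-1}(\tU)|$, is (as you yourself observe) self-consistent. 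As written, that route is a dead end, and the burden of the proof rests entirely on the unfinished case of the second route, repaired as above.
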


\begin{proof}
Let $a,x \in X$ with $x \not\in a^\bot$. Choose $b\in U_a$ as above and consider, in turn, each part of
the decomposition 
$$ V\Gamma_a \ = \ I_a \ \dot{\cup} \ U_a  \ \dot{\cup} \  Z_a  \ \dot{\cup}  \  Y_a $$
relative to $a$ and $b$.
If $x\in I_a$, $\Gamma(x) = \Gamma(a)$; if $x \in U_a$, then $d_\Gamma(a,x)=2$ by Lemma \ref{LUdist2}.
Next consider $x\in Z_a$. Then $d(x,b)=2$ but $\Gamma(x)\cap \Gamma(b) \subseteq \Gamma(a)$ since 
$x$ and $b$ lie in distinct components of $\Gamma_a$. Finally, consider $x\in Y_a$ with $(a,x)\in R_i$.
By Lemma \ref{LYZsplit}, there exists $x'\in Z_a \cap R_i(a)$. Since $x'$ has a neighbor in $\Gamma(a)$,
$p_{11}^i > 0$ which then implies that some neighbor of $x$ lies in $\Gamma(a)$ as well. $\Box$
\end{proof}

\begin{theorem}  \label{TWempty}
Let $(X,\cR)$ be any symmetric association scheme and let $\Gamma=(X,R_1)$ be
any connected basis relation. With reference to the above definitions, $\tW = \emptyset$.
\end{theorem}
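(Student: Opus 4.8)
The plan is to argue by contradiction: assume $\tW\neq\emptyset$ and produce an impossibility. First I would invoke Lemma~\ref{LWa-empty} to pass to the favorable situation that $\Gamma$ has diameter two, so that every vertex outside $a^\bot$ is a single common-neighbor step away from $a$. Fix a basepoint $a$ and a vertex $b\in U_a$. The established facts are that $Z_a\subseteq U_b$ and $a\in U_b$ with $a\notin Z_a$, giving $|Z_a|\le |U_b|-1=|U_a|-1$; on the other hand, the minimality in the definition of $\tU$ forces $|U_a|=\sum_{i\in\tU}v_i\le\sum_{i\in\tW}v_i=|W_a|$ whenever $\tW\neq\emptyset$. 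Combining these, $Y_a=W_a\setminus Z_a$ satisfies $|Y_a|\ge |W_a|-|U_a|+1\ge 1$, so $Y_a$ is forced to be nonempty. The whole proof then reduces to showing that such a nonempty ``escaped'' set $Y_a$ cannot exist.

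Next I would pin down the structure of $Y_a$. By Lemma~\ref{LpathYa} every neighbor of $Y_a$ lying outside $Y_a$ is in $\Gamma(a)$, so $Y_a$ is a union of connected components of $\Gamma\setminus\Gamma(a)$; pushing walks of $H'$ back through the lifting in Proposition~\ref{Pphi} then shows that $\varphi_a(Y_a)$ is a union of entire connected components of $H'$, each non-singleton and hence (by minimality) of total valency at least $|U_a|$. Re-examining $Y_a$ from the second basepoint $b$ sharpens this: since $a$ and $b$ are not twins, $Y_a$ avoids $b^\bot$, and $Y_a$ lies in a component of $\Gamma_b$ different from the one containing $a$, every common neighbor of $a$ and a vertex $y\in Y_a$ must lie in $\Gamma(b)$; together with the previous observation this shows $Y_a$ is enclosed by the separator $K=\Gamma(a)\cap\Gamma(b)$, whose size is $p_{11}^{h}<v_1$ (where $(a,b)\in R_h$ with $h\in\tU$, so $h\notin\tI$ and $p_{11}^h\neq p_{11}^0=v_1$). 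Finally, Lemma~\ref{LYZsplit} guarantees that $Z_a$ meets every subconstituent that $Y_a$ meets, so within each $H'$-component appearing in $\varphi_a(Y_a)$ both $Y_a$ and $Z_a$ occur and neither contains a complete subconstituent.

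The last step -- converting this coexistence into a numerical contradiction -- is where I expect the real difficulty to lie. The tension to exploit is that $Z_a\cup\{a\}$ already fills $U_b$ up to one vertex, so any additional ``$\tU$-mass'' visible from $b$ overflows $|U_a|$; concretely I would try to show that the enclosed set $Y_a$, viewed from $b$, must also land inside $U_b$ (equivalently $\varphi_b(Y_a)\subseteq\tU$), which would yield $W_a\cup\{a\}\subseteq U_b$ and hence $|W_a|+1\le|U_b|=|U_a|\le|W_a|$, the desired contradiction. The obstacle is precisely that a vertex of $Y_a$ could a priori be related to $b$ through a relation in $\tW$ rather than $\tU$; ruling this out should combine the enclosure by the small separator $K$ with Proposition~\ref{PW_b} and the component-lifting of Proposition~\ref{Pphi}, and is the step I would spend the most effort on. As a fallback closing, should the direct inclusion resist proof, I would instead use the fact that each relevant $H'$-component has its $\Gamma_a$-preimage split into at least two subconstituent-transversal pieces (one in $Y_a$, one in $Z_a$) to extract a non-singleton component of $H'$ whose vertex-preimage is strictly smaller than that of $\tU$, contradicting the minimal choice of $\tU$.
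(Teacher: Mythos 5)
Your structural setup is sound and, in fact, closely parallels the paper's own preliminaries: Lemma~\ref{LWa-empty} does give diameter two; your counting $|Z_a|\le |U_b|-1=|U_a|-1\le |W_a|-1$ correctly forces $Y_a\neq\emptyset$; and the enclosure of $Y_a$ by $\Gamma(a)\cap\Gamma(b)$ follows from Proposition~\ref{Pxya} applied at basepoint $b$, exactly as you say. But the proposal is not a proof: the step that actually produces the contradiction is the one you explicitly defer (``the step I would spend the most effort on''), and neither of your two suggested endgames goes through as described. The fallback is unworkable in principle: the quantity $|\varphi_x^{-1}(\tilde{C})|=\sum_{i\in\tilde{C}}v_i$ attached to a component $\tilde{C}$ of $H'$ is independent of the basepoint $x$, and the $Y_a/Z_a$ split refines the preimages of components of $\Gamma_a$, not of $H'$; the components of $H'$ form a fixed combinatorial object, so no ``extracted'' piece is ever the preimage of an $H'$-component, and nothing with preimage smaller than that of $\tU$ can arise. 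The primary route, proving $Y_a\subseteq U_b$ (so that $W_a\cup\{a\}\subseteq U_b$ overflows $|U_a|$), is a genuine statement under the reductio hypothesis, but you give no argument excluding $\varphi_b(y)\in\tW$ for $y\in Y_a$: Proposition~\ref{PW_b} only rules out $\tI$, and the small-separator observation by itself does not decide between $\tU$ and $\tW$. As it stands, the contradiction is conjectured, not derived.

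For comparison, the paper abandons the $Y_a/Z_a$ machinery at exactly this point (it is needed only to prove Lemma~\ref{LWa-empty}) and finishes with a short symmetric counting argument on intersection numbers: set $\mu=\min\{p_{11}^i\mid i\in\tU\}$ and $\omega=\min\{p_{11}^i\mid i\in\tW\}$, both positive by diameter two. Choose $x\in R_k(a)$ with $p_{11}^k=\mu$, then $a'\in\Gamma(a)\setminus\Gamma(x)$ (possible as $x$ is not a twin of $a$), then $y\in R_\ell(a)\cap\Gamma(a')$ with $p_{11}^\ell=\omega$. The path $a,a',y$ lies in $\Gamma_x$ and $a\in U_x$, so $y\in U_x$ and $|\Gamma(x)\cap\Gamma(y)|\ge\mu$; by Proposition~\ref{Pxya} these common neighbors all lie in $\Gamma(a)\cap\Gamma(y)$, which additionally contains $a'\notin\Gamma(x)$, whence $\omega=|\Gamma(a)\cap\Gamma(y)|\ge\mu+1$. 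Swapping the roles of $k$ and $\ell$ gives $\mu\ge\omega+1$, a contradiction. Note that your enclosure observation is precisely Proposition~\ref{Pxya} in action; aimed at the minima $\mu$ and $\omega$ rather than at the sets $Y_a$ and $U_b$, it closes the proof in a few lines. That idea --- comparing the two minima and exploiting the $\tU\leftrightarrow\tW$ symmetry --- is what is missing from your proposal.
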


\begin{proof}
By way of contradiction, assume $\tW \neq \emptyset$ and define
$$ \mu = \min \{ p_{11}^i \mid i \in \tU \},  \qquad \omega = \min \{ p_{11}^i \mid i \in \tW \}$$
and select $k\in \tU$ and $\ell \in \tW$ with $p_{11}^k = \mu$ and $p_{11}^\ell = \omega$. Note that
$\mu > 0$ and $\omega > 0$ by Lemma \ref{LWa-empty}.
Now choose $a\in X$, and select $x$ in $R_k(a)$.  Since $x$ is not a twin of $a$, we may choose
$a'\in \Gamma(a) \setminus \Gamma(x)$ and since $p_{1\ell}^1>0$, we may choose and $y$ in $R_\ell(a)$ which is a neighbor of 
$a'$.  Since $\Gamma_x$ contains a path from $a$ to $y$ and $a\in U_x$, we have $y\in U_x$. So
$| \Gamma(x) \cap \Gamma(y) | \ge \mu$.  By Proposition \ref{Pxya}, $\Gamma(x)\cap \Gamma(y) \subseteq \Gamma(a)$. (See Figure \ref{FigWempty}.) But $a' \in \Gamma(y)\cap \Gamma(a)$. So 
$$\omega \ge 1 + | \Gamma(x) \cap \Gamma(y) | > \mu.$$

Now we simply reverse the roles of $x$ and $y$; more precisely, we swap $\ell$ and $k$.

Select $x$ in $R_\ell(a)$ and,  choosing
$a'\in \Gamma(a) \setminus \Gamma(x)$, we may find a vertex $y$ in $R_k(a)$ which is a neighbor of 
$a'$.  Since $\Gamma_x$ contains a path from $a$ to $y$ and $a\in W_x$, we have $y\in W_x$. So
$| \Gamma(x) \cap \Gamma(y) | \ge \omega$.  By Proposition \ref{Pxya}, $\Gamma(x)\cap \Gamma(y) \subseteq \Gamma(a)$. But $a' \in \Gamma(y)\cap \Gamma(a)$. So 
$$\mu \ge 1 + | \Gamma(x) \cap \Gamma(y) | > \omega.$$
We have $\omega > \mu$ and $\mu > \omega$, producing the desired contradiction. $\Box$
\end{proof}

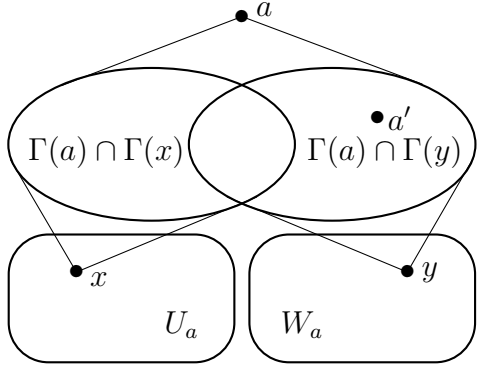
\begin{figure}
\begin{center}
\begin{tikzpicture}
\draw[thick,black] (1.8,0) ellipse (0.75in and 0.4in);
\draw[thick,black] (4.2,0) ellipse (0.75in and 0.4in);
\draw[thick,black]
  (-.1,-2.9) {[rounded corners=15pt] --
  ++(3.,0)  -- 
  ++(0,1.7) --
  ++(-3.,0) --
  cycle};
\draw[thick,black]
  (3.1,-2.9) {[rounded corners=15pt] --
  ++(3,0)  -- 
  ++(0,1.7) --
  ++(-3,0) --
  cycle};
\node (uu) at (2.2,-2.4) {$U_a$};
\node (ww) at (3.8,-2.4) {$W_a$};
\node (ba) at (3,1.7) {$\bullet$};   \node (la) at (3.3,1.8) {$a$};
\node (bx) at (.8,-1.7) {$\bullet$};   \node (lx) at (1.1,-1.8) {$x$};
\node (by) at (5.2,-1.7) {$\bullet$};   \node (ly) at (5.5,-1.7) {$y$};
\node (bap) at (4.8,.35) {$\bullet$};   \node (lap) at (5.1,.35) {$a'$};
 \node (lgamax) at (1.2,-.1) {$\Gamma(a)\cap \Gamma(x)$};
 \node (lgamay) at (4.9,-.1) {$\Gamma(a)\cap \Gamma(y)$};
\draw (-0.08,-0.2) -- (.8,-1.7) -- (3.2,-0.7);
\draw (6.08,-0.18) -- (5.2,-1.7) -- (2.8,-0.7);
\draw (0.6,0.78) -- (3,1.7) -- (5.4,0.8);
\end{tikzpicture} 
\end{center}
\caption{Since $\Gamma$ has diameter two, all common neighbors of $x$ and $y$ are 
contained in  $\Gamma(a)$. \label{FigWempty}}
\end{figure}

\section{Proofs of the main theorem and its corollaries}
\label{Sec:proofs}

We are now ready to present the proof of our main theorem.

\bigskip

\noindent {\sl Proof of Theorem \ref{Tmain}.} \ For notational convenience, we assume $i=1$.

We begin by showing $(3) \Leftrightarrow (4)$. If $a$ and $b$ are twins in $\Gamma$ with $(a,b)\in R_j$, then
$j > 1$ and $p_{11}^j = v_1$ so that $j\in \tI$ and $\{ j\}$ is the entire vertex set of some component of 
$H' = H   \setminus \{0,1\}$. So either $H'$ is not connected or $d=j=2$ and $\Gamma$, being imprimitive,
is a complete multipartite graph.  Conversely, by Theorem \ref{TWempty}, $\tW=\emptyset$ so if $\tI=\emptyset$
we have that  $H'$ is connected.

The assertion $(2) \Rightarrow (1)$ is trivial. Proposition \ref{PH'disconn} gives us $(1) \Rightarrow (3)$. So
we need only check that $(3)$ implies $(2)$.

Assume now that $H'$ is connected and yet there is some $a\in X$ with $\Gamma_a$ not connected. 
By Proposition \ref{Pphi}, any $x$ in $\Gamma_a$ is joined by a walk in $\Gamma_a$ to some vertex in 
$R_j(a)$ for every $j>1$. (Simply lift a walk in $H'$ from $\ell$ to $j$ where $(a,x)\in R_\ell$.)
So for every $j>1$ every connected component of $\Gamma_a$ intersects every 
subconstituent $R_j(a)$  non-trivially.  
Select $j>1$ so as to maximize $D:=d_H(0,j)$ and choose $x,y \in R_j(a)$ such that $x$ and $y$ lie in
distinct components of $\Gamma_a$.  Then every $xy$-path in $\Gamma$ must include a vertex in $\Gamma(a)$,
so $d_\Gamma(x,y) \ge 2(D-1)$. Since $d_\Gamma(x,y) \le D$ by Lemma~\ref{LdiamGamma}, this forces $D\le 2$. In particular, $p_{11}^\ell >0$ for every $\ell > 1$.

Select $\ell >1$
so as to minimize $p_{11}^\ell$ and select $x,y \in R_\ell(a)$ from distinct components of $\Gamma_a$.
Then $(x,y) \in R_j$ for some $j>1$ and so $|\Gamma(x)\cap \Gamma(y)| \ge p_{11}^\ell$. But 
since these two vertices lie in distinct components, Proposition \ref{Pxya} gives us
$$ \Gamma(x) \cap  \Gamma(y) \subseteq \Gamma(a) \cap \Gamma(y)$$
so $p_{11}^j = p_{11}^\ell$ and $ \Gamma(x) \cap  \Gamma(y) =\Gamma(a) \cap \Gamma(y)$.
If $a' \in \Gamma(a)$, then $a'$ has $p_{1\ell}^1 > 0$ neighbors in $R_\ell(a)$. For any such neighbor $z$, we
must have either $\Gamma(z) \cap \Gamma(a) = \Gamma(x) \cap \Gamma(a)$ or $\Gamma(z) \cap \Gamma(a) = \Gamma(y) \cap \Gamma(a)$, both of which force $a' \in
\Gamma(x)$. So vertices $a$ and $x$ must be twins.
$\Box$

The proofs of Corollaries \ref{C1}, \ref{C2} and \ref{C3} are now rather immediate. Since each is a statement about the  symmetrization of some commutative scheme, Theorem \ref{Tmain} applies.

\bigskip

\noindent {\sl Proof of Corollary \ref{C2}.}  This is essentially Theorem \ref{TWempty}. $\Box$

\bigskip

\noindent {\sl Proof of Corollary \ref{C1}.}  First, if we have no twins then
$\Gamma_a$ is connected. Any 
$a' \in \Gamma(a)$ has at least one neighbor in $V\Gamma_a$. If $a\not\in T$, then some $a' \in \Gamma(a)$
is also not included in $T$. So the graph $\Gamma\setminus T$ is 
connected as long as $T \neq \Gamma(a)$. 

If $b$ is a twin of $a$ in $\Gamma$, then $\{b\}$ is adjacent to every $x\in \Gamma(a)$. 
Since $\Gamma(a) \not\subseteq  T$,    some $a' \in \Gamma(a)$ is a vertex of $\Gamma \setminus T$.
By Corollary \ref{C2}, $\Gamma \setminus a^\bot$ has at most one non-singleton component. Let 
$\Xi$ be the component of $\Gamma \setminus T$ containing this component as a connected subgraph.
(If $\Gamma \setminus a^\bot$ consists only of singletons, choose $\Xi$ to be any component
of $\Gamma \setminus a^\bot$.) Since $a'$ has at least one neighbor in $V\Gamma_a$, 
the component $\Xi$ contains $a'$ and every twin $b$ of $a$ since each of these is a neighbor of $a'$.
Likewise, if $a\not\in T$, then $a$ belongs to $\Xi$ since it is adjacent to $a'$. So in this case as well,
$\Gamma \setminus T$ is connected.
 $\Box$

\bigskip

\noindent {\sl Proof of Corollary \ref{C3}.} Let $a\in C$ and take $T=C$. Then apply Corollary \ref{C1}. $\Box$

We finish this section with a simple generalization arising from the proof above. We state the result without proof.

\begin{theorem}
Assume $(X,\cR)$, $\Gamma$ and $H$ are defined as in Theorem \ref{Tmain}.
Let $B_{H,t}(0) = \{ i \mid 0\le i\le d, \ d_H(0,i)\le t\}$ and $B_{\Gamma,t}(a) = \cup_{B_{H,t}(0)} R_i(a)$.
\begin{itemize}
\item[(a)] 
If $\Gamma' := \Gamma \setminus B_{\Gamma,t}(a)$ is disconnected and $b\in X$ with $d_\Gamma(a,b)=D$ (the diameter of $\Gamma$), then for any $x\not\in B_{\Gamma,t}(a)$ not in the same component of $\Gamma'$ as 
$b$, we have $d_\Gamma(a,x) \le 2t$. 
\item[(b)]
If $H \setminus B_{H,t}(0)$ is connected and yet $\Gamma \setminus B_{\Gamma,t}(a)$ is disconnected,
then $D \le 2t$.  $\Box$
\end{itemize}
\end{theorem}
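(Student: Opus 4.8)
The plan is to generalize the metric argument from the proof of Theorem~\ref{Tmain}, replacing the closed neighborhood $a^\bot$ by the ball of radius $t$. The key preliminary observation is that, by Lemma~\ref{LdiamGamma}, a vertex $z$ lies in $B_{\Gamma,t}(a)$ exactly when $d_\Gamma(a,z)\le t$; that is, $B_{\Gamma,t}(a)$ is the metric ball of radius $t$ about $a$, and $\Gamma'=\Gamma\setminus B_{\Gamma,t}(a)$ consists of the vertices at distance more than $t$ from $a$. In particular, if $\Gamma'$ is disconnected then it is nonempty, so some vertex lies beyond distance $t$ and hence $D>t$; thus a diameter-realizing vertex $b$ lies in $\Gamma'$, and the hypotheses of (a) are not vacuous.

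For part (a) I would argue purely by triangle inequality. Since $x$ and $b$ lie in distinct components of $\Gamma'$, every $xb$-path in $\Gamma$ --- in particular a shortest one --- must meet $B_{\Gamma,t}(a)$ at some vertex $z$, so $d_\Gamma(a,z)\le t$ and $d_\Gamma(x,b)=d_\Gamma(x,z)+d_\Gamma(z,b)$. The triangle inequality gives $d_\Gamma(x,z)\ge d_\Gamma(a,x)-t$ and $d_\Gamma(z,b)\ge d_\Gamma(a,b)-t=D-t$, while $d_\Gamma(x,b)\le D$ since $D$ is the diameter. Combining these,
$$ D\ \ge\ d_\Gamma(x,b)\ \ge\ \bigl(d_\Gamma(a,x)-t\bigr)+\bigl(D-t\bigr), $$
which rearranges to $d_\Gamma(a,x)\le 2t$, as required.

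For part (b) I would invoke the lifting half of Proposition~\ref{Pphi} to reduce to part (a). Let $b$ satisfy $d_\Gamma(a,b)=D$ and put $j^\ast=\varphi_a(b)$, so $(a,b)\in R_{j^\ast}$ and, by Lemma~\ref{LdiamGamma}, $d_H(0,j^\ast)=D>t$; hence $j^\ast$ is a vertex of $H\setminus B_{H,t}(0)$. Given any $x\in\Gamma'$, the connectedness of $H\setminus B_{H,t}(0)$ supplies a walk in that subgraph from $\varphi_a(x)$ to $j^\ast$; lifting it through $\varphi_a$ produces a walk in $\Gamma$ from $x$ to some vertex of $R_{j^\ast}(a)$, every vertex of which projects into $\{j:d_H(0,j)>t\}$ and hence lies in $\Gamma'$. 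Therefore every component of $\Gamma'$ meets $R_{j^\ast}(a)$. As $\Gamma'$ is disconnected, some component other than that of $b$ meets $R_{j^\ast}(a)$; choosing $x$ there gives $x\in R_{j^\ast}(a)$ with $d_\Gamma(a,x)=D$, lying in a component of $\Gamma'$ different from that of $b$. Part (a) then forces $D=d_\Gamma(a,x)\le 2t$.

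The metric core of both parts is the elementary triangle-inequality pinch in (a), and it directly mirrors the $t=1$ reasoning in the proof of Theorem~\ref{Tmain}. I expect the only delicate bookkeeping to occur in part (b): one must verify that the lifted walk stays inside $\Gamma'$ rather than merely inside $\Gamma$. This is exactly the point at which the hypothesis that $H\setminus B_{H,t}(0)$ (and not just $H\setminus\{0\}$) is connected is used, together with Lemma~\ref{LdiamGamma} to translate $H$-distances into $\Gamma$-distances.
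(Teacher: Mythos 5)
Your proof is correct and is essentially the argument the paper intends: the paper states this theorem without proof, describing it as a simple generalization of its proof of Theorem~\ref{Tmain}, and your combination of walk-lifting via Proposition~\ref{Pphi} (to show every component of $\Gamma'$ meets $R_{j^\ast}(a)$, where $d_H(0,j^\ast)=D$) with the triangle-inequality pinch $D \ge d_\Gamma(x,b) \ge \bigl(d_\Gamma(a,x)-t\bigr)+\bigl(D-t\bigr)$ is exactly that generalization, recovering the paper's $(3)\Rightarrow(2)$ step when $t=1$. Your preliminary check that disconnectedness of $\Gamma'$ forces $D>t$, so that the diameter-realizing vertex $b$ genuinely lies in $\Gamma'$, is correct and worthwhile bookkeeping.
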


\section{Further results on connectivity}
\label{Sec:conn}

In this section, we develop some machinery for the study of small disconnecting sets which are not
localized. We then apply these tools to show that, with the exception of polygons, a basis relation in a  symmetric association scheme has vertex connectivity at least three. We can say a bit more in the case where $\Gamma$
has diameter two.

Elementary graph theoretic techniques allow us to handle the case where $\Gamma$ is in some sense locally connected. For example, if $\Gamma(y)$ induces a connected subgraph for every $y\in T$ and $d_\Gamma(y,y')\ge 3$ for any pair of distinct elements $y,y'\in T$, then $\Gamma \setminus T$ is connected. 
The proof of this claim is essentially the same as the proof of the following proposition.

\begin{proposition}
\label{Psptree}
Let $\Gamma$ be a connected simple graph. Suppose any two vertices at distance two in  $\Gamma$ lie in some common cycle of  length at most  $g$ and $T \subseteq V\Gamma$ satisfies $d_\Gamma(y,y') 
\ge g+1$ for all pairs $y,y'$ of distinct vertices from $T$. Then $\Gamma \setminus T$ is connected.
\end{proposition}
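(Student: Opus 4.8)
The plan is to argue by contradiction using an extremal path. Suppose $\Gamma \setminus T$ is disconnected and pick $x,z$ lying in distinct components of $\Gamma \setminus T$. Since $\Gamma$ is connected, there is at least one $xz$-path in $\Gamma$; among all such paths, let $P$ be one that minimizes the number $|V(P)\cap T|$ of vertices it uses from $T$. Because $x$ and $z$ are separated by $T$, this minimum is strictly positive, so $P$ passes through some $t\in T$. As $x,z\notin T$, every vertex of $T$ on $P$ is internal and has two neighbors on $P$; fix such a $t$ with path-neighbors $u$ and $w$.

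First I would record the easy structural fact that neighbors of a $T$-vertex avoid $T$: if $u\in T$ then $t,u\in T$ with $d_\Gamma(t,u)=1<g+1$, contradicting the spacing hypothesis (note $g\ge 3$ since cycles have length at least three). Hence $u,w\notin T$, and moreover $d_\Gamma(u,w)\le 2$ since both are adjacent to $t$.

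The core of the argument is a local rerouting that deletes $t$ from the path without introducing any new vertex of $T$. If $d_\Gamma(u,w)=1$, I simply replace the subpath $u,t,w$ by the edge $uw$. If $d_\Gamma(u,w)=2$, the hypothesis furnishes a cycle $C$ through $u$ and $w$ of length at most $g$; since $t$ occurs at most once on $C$, at least one of the two $uw$-arcs of $C$, say $A$, avoids $t$, and I replace $u,t,w$ by $A$. In either case the result is a walk $W$ from $x$ to $z$ whose vertices lie in $(V(P)\setminus\{t\})\cup \mathrm{int}(A)$. The key claim is that $\mathrm{int}(A)\cap T=\emptyset$: if some $t'\in T$ lay on $A$, then, using that $u$ is a neighbor of $t$ and that $A$ has length at most $g-2$ (the arc containing $t$ has length at least two whenever $t\in C$, and otherwise I take $A$ to be the shorter arc), I would obtain
$$ d_\Gamma(t,t') \le d_\Gamma(t,u) + d_\Gamma(u,t') \le 1 + (g-2) = g-1, $$
contradicting $d_\Gamma(t,t')\ge g+1$. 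Granting the claim, $W$ uses a subset of $(V(P)\cap T)\setminus\{t\}$, so extracting any $xz$-path from $W$ yields an $xz$-path with strictly fewer vertices in $T$, contradicting the minimality of $P$. This contradiction shows $\Gamma\setminus T$ is connected.

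The step I expect to be the main obstacle is making the rerouting rigorous: the detour arc $A$ may meet $P$ elsewhere, so $W$ is only a walk and I must pass to a sub-path, being careful that this extraction can only discard vertices and hence cannot reintroduce members of $T$. The second delicate point is the bound $|A|\le g-2$ in the case $t\in C$, where I am forced to use the arc avoiding $t$ rather than the shorter one; the distance estimate above is precisely what makes the spacing hypothesis $d_\Gamma(y,y')\ge g+1$ do its work.
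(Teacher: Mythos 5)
Your proof is correct, but it proceeds along a genuinely different route from the paper's. The paper's proof is constructive and global: setting $\delta=\lfloor g/2\rfloor$, it observes that the balls $B_\delta(y)$, $y\in T$, are pairwise disjoint (this is where the spacing hypothesis enters), uses the cycle hypothesis to argue that $y$ is not a cut vertex of the induced subgraph $\Gamma[B_\delta(y)]$, so that each ball admits a spanning tree in which $y$ is a leaf, then contracts every ball to a single vertex, lifts a spanning tree of the resulting connected minor back to $\Gamma$, and checks that these lifted edges together with the ball trees (minus the leaf edge at each $y$) form a spanning tree of $\Gamma\setminus T$. You instead give an extremal exchange argument by contradiction: a path $P$ between separated vertices minimizing $|V(P)\cap T|$ is rerouted around one $t\in T$ using either the edge $uw$ or an arc, avoiding $t$, of the short cycle through $u$ and $w$; here the cycle hypothesis provides the detour, and the spacing hypothesis (via $d_\Gamma(t,t')\le 1+(g-2)<g+1$) guarantees the detour meets no other vertex of $T$, so extracting a path from the new walk contradicts minimality. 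The two proofs use the hypotheses in parallel ways (local detour versus non-cut-vertex of a ball; detour avoidance versus ball disjointness), but yours is more elementary --- no minors or spanning trees --- while the paper's buys an explicit certificate of connectivity, exhibiting a spanning tree of $\Gamma\setminus T$ rather than merely proving one exists. Your two flagged delicate points (passing from walk to path, and the bound $|A|\le g-2$ when $t\in C$) are both handled soundly; note also that when $d_\Gamma(u,w)=2$ any cycle through $u$ and $w$ has length at least four, so $g\ge 4$ and the shorter-arc bound $\lfloor g/2\rfloor\le g-2$ needed in the case $t\notin C$ is automatic.
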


\begin{proof}
Set $\delta = \lfloor g/2 \rfloor$ and, for $y\in T$ set 
$B_\delta(y) =  \{ x\in X \mid d_\Gamma(x,y) \le \delta \}$. The 
induced subgraph $\Gamma[B]$ of $\Gamma$ determined by $B=B_\delta(y)$ is connected so 
admits a spanning tree. Moreover, since $y$ is not a cut vertex of $\Gamma[B]$,
there exists a spanning tree $T_y$ for $\Gamma[B]$ in which $y$ is a leaf vertex. For $y\in T$, 
let $E_y$ denote the edge set of $T_y$ with the sole edge incident to $y$ removed.

Now consider the minor $\Delta$
of $\Gamma$ obtained by contracting $B_\delta(y)$ to a single vertex for every $y\in T$. Since $\Delta$ is
again a connected graph, it admits a spanning tree $T$. Lift the edge set $E_T$ of $T$ back to $E\Gamma$
and note that $E_T$ contains no edge from any of the induced subgraphs $\Gamma[B_\delta(y)]$, $y\in T$.
So $E_T \cup \left( \cup_{y\in T} E_y \right)$ is the edge set of a spanning tree in $\Gamma \setminus T$,
which demonstrates that   $\Gamma \setminus T$ is connected. $\Box$
\end{proof}

\subsection{A spectral lemma}
\label{Sec:spec}

Eigenvalue techniques such as applications of eigenvalue interlacing play an important role in \cite{bromes} and \cite{brouwer-koolen}. The following lemma is inspired by those ideas. This can be used, in conjunction with
Lemma \ref{PK211}, to show that a graph with a small disconnecting set $T$ whose elements are not too close
together must be locally a disjoint union of cliques of size at most $|T|$.

\begin{lemma}
\label{Lspec-cut}
Let $(X,\cR)$ be a symmetric association scheme and let $\Gamma=(X,R_1)$ be the graph associated to a con\-nec\-ted basis relation.
Assume that $\Gamma$ contains no induced subgraph isomorphic to $K_{2,1,1}$. 
If  $T\subseteq X$ is a disconnecting set for $\Gamma$, then $|T|> p_{11}^1$.
\end{lemma}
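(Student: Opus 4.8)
The plan is to exploit the rigid clique structure forced by the absence of $K_{2,1,1}$, turn the connectivity statement into a bound on the second largest eigenvalue of $\Gamma$, and obtain that bound by eigenvalue interlacing in the spirit of Brouwer--Mesner.

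First I would read off the local geometry. Since $\Gamma$ has no induced $K_{2,1,1}$, the subgraph induced on each neighbourhood $\Gamma(a)$ is a disjoint union of cliques; because $\Gamma$ is a basis relation, every edge lies in exactly $p_{11}^1$ triangles whose apexes are mutually adjacent, so every edge extends to a unique maximal clique of size $p_{11}^1+2$ and each vertex lies in exactly $r:=v_1/(p_{11}^1+1)$ of them. Letting $N$ be the vertex--clique incidence matrix, this partition of the edge set into cliques gives $A_1=NN^{\top}-rI$, and $NN^{\top}\succeq 0$ then forces the least eigenvalue to satisfy $\theta_{\min}\ge -r$. (If $r=1$ then $\Gamma=K_{v_1+1}$ is complete and $|T|\ge v_1>p_{11}^1$ is immediate, so I assume $r\ge 2$.)

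Next I would reduce the claim to a single spectral inequality. Suppose $T$ disconnects $\Gamma$ and write $X\setminus T=S_1\,\dot{\cup}\,S_2$ with both parts nonempty and no edges between them. The principal submatrix $A_1[X\setminus T]$ is block diagonal, so its second largest eigenvalue is at least $\min_i \theta_{\max}(\Gamma[S_i])$, which is at least the smaller average degree $\min_i\left(v_1-\tfrac{e(S_i,T)}{|S_i|}\right)\ge v_1-|T|$, since a vertex has at most $|T|$ neighbours in $T$. By Cauchy interlacing $\theta_2(\Gamma)\ge \theta_2\!\left(A_1[X\setminus T]\right)\ge v_1-|T|$, that is,
$$ |T|\ \ge\ v_1-\theta_2(\Gamma). $$
Hence it suffices to prove the strict bound $\theta_2(\Gamma)<v_1-p_{11}^1$.

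This last inequality is the heart of the matter, and it is exactly where diamond-freeness (not mere regularity) must be used. My approach is to feed the clique structure into the scheme identity $A_1^2=v_1I+p_{11}^1A_1+\sum_{i\ge2}p_{11}^iA_i$, so that every eigenvalue $\theta\ne v_1$ is a value of the quadratic $x^2-p_{11}^1x-v_1$ against the nonnegative combination $\sum_{i\ge2}p_{11}^iA_i$. Using $v_1=r(p_{11}^1+1)$ with $r\ge 2$, the positive root of that quadratic already lies strictly below $v_1-p_{11}^1$, so the only danger is an eigenvalue $\theta$ with $v_1-p_{11}^1\le \theta<v_1$; I would rule this out by combining $\theta_{\min}\ge -r$ with the hypothesis that $\Gamma$ is connected and not complete multipartite. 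The main obstacle is precisely this step: the interlacing and the bound $\theta_{\min}\ge -r$ are routine, but converting them into a strict spectral gap that survives the extremal configuration (a disconnecting set equal to some neighbourhood $\Gamma(a)$, where $|T|=v_1$ and the bound is nearly tight) is delicate, and I expect it to require the full clique decomposition of each neighbourhood rather than only the single parameter $p_{11}^1$.
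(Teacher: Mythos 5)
Your first two steps are sound: diamond-freeness does give the edge-clique partition, $A_1=NN^{\top}-rI$, $\theta_{\min}\ge -r$, and the interlacing argument correctly yields $|T|\ge v_1-\theta_2(\Gamma)$ (Fiedler's bound for regular graphs). The fatal problem is your Step 3: the inequality $\theta_2(\Gamma)<v_1-p_{11}^1$ to which you reduce the lemma is \emph{false} in general, so no amount of work can complete the proposal along these lines. A concrete counterexample is the collinearity graph of a generalized hexagon of order $(s,1)$, i.e.\ the flag graph of $\mathrm{PG}(2,s)$: it is a distance-regular (hence basis) relation, and it is $K_{2,1,1}$-free because the common neighbours of two adjacent flags all lie on the unique line through them. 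Its spectrum is $2s,\ s-1+\sqrt{s},\ s-1-\sqrt{s},\ -2$, while $v_1-p_{11}^1=2s-(s-1)=s+1$; thus $\theta_2=s-1+\sqrt{s}\ge s+1$ as soon as $s\ge 4$. For $s=9$ one gets $\theta_2=11>10=v_1-p_{11}^1$, so your interlacing bound gives only $|T|\ge 7$, whereas the lemma demands $|T|\ge 9$. (Your proposed mechanism for Step 3 is also unsound on its own terms: in $\theta^2-p_{11}^1\theta-v_1=\sum_{i\ge 2}p_{11}^iP_{ji}$ the right-hand side need not be nonpositive, e.g.\ for the Hamming graph $H(4,3)$ one has $\theta_2=5$ and $5^2-5-8=12>0$, so eigenvalues are not trapped below the positive root of the quadratic.)

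The paper's proof avoids any such global spectral gap, and this is exactly what saves it. Assuming $|T|\le p_{11}^1$, it takes a component with vertex set $B$ and smaller spectral radius $\rho$, notes that $B\cup T$ contains a clique $C$ of size $p_{11}^1+2$ (your clique structure), and uses the idempotent $E_1$ for the second eigenvalue $\theta$: since $\Gamma$ is not complete multipartite, $\theta>0$, so the principal submatrix $E_{C,C}=\frac{m_1}{|X|}I+\frac{\theta m_1}{v_1|X|}(J-I)$ is invertible and $E_{X,B\cup T}$ has rank at least $p_{11}^1+2$. Because $|T|\le p_{11}^1$, its row space contains two independent vectors vanishing on $T$; restricted to $B$ these are two independent $\theta$-eigenvectors of the component, so $\rho=\theta$ (interlacing gives $\rho\le\theta$) is a multiple eigenvalue of a connected graph, contradicting Perron--Frobenius. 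In other words, the paper converts the clique into \emph{localized} eigenvectors for $\theta$ rather than into a bound on $\theta$ itself; if you want to salvage your approach, you should replace your Step 3 by this localization argument, since the spectral gap you aimed for simply does not hold.
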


\begin{proof}
The result obviously holds when $\Gamma$ is complete multipartite, so assume $\Gamma$ is not a com\-plete 
multipartite graph.  By \cite[Cor.~3.5.4(ii)]{bcn}, we then know that the second largest eigenvalue $\theta$ of $\Gamma$ is positive. Order the eigenspaces of the scheme so that $A_1 E_1 = \theta E_1$ and abbreviate 
$E=E_1$. For $K,L\subseteq X$, denote by $E_{K,L}$ the submatrix of $E$ obtained by restricting
to rows in $K$ and columns in $L$. Let $C$ be any
clique in $\Gamma$. Then, because $v_1 > \theta > 0$, the matrix $E_{C,C} =  \frac{m_1}{|X|}I
+ \frac{\theta m_1}{v_1|X|}(J-I)$ is invertible. 

Assume now that some disconnecting set $T \subseteq X$ has $|T| \le p_{11}^1$.
Let $\Xi$ and $\Xi'$ be two connected components of $\Gamma \setminus T$ with 
vertex sets $B$ and $ B'$, respectively,  and let
$\rho$ and $\rho'$ denote the spectral radii of these two graphs.  Assume, without loss, that $\rho\le \rho'$. By eigenvalue interlacing, $\rho \le \theta$.  (see, e.g.,\cite[Theorem~3.3.1]{bcn}.) We now show $\rho=\theta$.

Since $\Gamma$ does not contain $K_{2,1,1}$ as an induced subgraph, it is locally a disjoint union of cliques
and every edge of $\Gamma$ lies in a clique $C$ of size $p_{11}^1 + 2$.  If $\Xi$ is edgeless, then $T$
contains all neighbors of some vertex, which is impossible since $|T| \le p_{11}^1 < v_1$. So $\Xi$
contains at least one edge and  $B\cup T$ contains some clique $C$ 
of size at least $p_{11}^1 + 2$. It follows that the submatrix  $E_{X,B\cup T}$
has rank at least $p_{11}^1 + 2$.  But $|T|\le p_{11}^1$. So the row space of $E_{X,B\cup T}$ 
contains at least two linearly  independent vectors which are zero in every entry indexed by an 
element of $T$. Restricting these two vectors to coordinates in $B$ only, we obtain two linearly
independent eigenvectors for graph $\Xi$ belonging to eigenvalue $\theta$. It follows that
$\rho = \theta$ and $\rho$, the spectral radius of $\Xi$, is not a simple eigenvalue. This contradicts
the Perron-Frobenius Theorem  (see, e.g., \cite[Theorem~3.1.1]{bcn}) since $\Xi$ was chosen to
be a connected graph.  $\Box$ 
\end{proof}

\begin{remark}
The hypotheses of the above lemma may clearly be weakened. The proof simply requires that both $B\cup T$ and 
$B' \cup T$ contain cliques of size $|T|+2$ or larger and that the entries $E_{xy}$ of idempotent $E$ 
are the same for all adjacent $x$ and $y$ in $V\Gamma$.
\end{remark}

\subsection{Intervals and metric properties of $\Gamma$}
\label{Sec:intervals}

For $a,b\in X$, if  $(a,b)\in R_i$, Lemma \ref{LdiamGamma} tells us that 
the path-length distance  $d_\Gamma(a,b)$ between $a$ and $b$ in 
graph $\Gamma$  is equal to the path-length distance $d_H(0,i)$ between $0$ and $i$ in $H$. It follows
that the diameter, $D$ say, of $\Gamma$ is equal to $\max_i d_H(0,i)$, which happens to be the diameter 
of $H$. We thus partition the index set  $\{0,1,\ldots,d\}$ according to distance from $0$ in $H$.
For each $0\le h\le D$, define $I_h = \{ i : d_H(0,i) = h\}$. 
For $0 \le i \le d$ with $i \in I_h$, define   
$$c(i) = \sum_{ j \in I_{h-1} } p_{1j}^i ~.$$

\begin{proposition}
\label{Lc(i)}
With $c(i)$ defined as above
\begin{itemize}
\item[(a)]
For any geodesic $0=\ell_0, 1=\ell_1,\ell_2,\ldots, \ell_h$ in $H$, 
$$1= c(\ell_1) \le c(\ell_2) \le \cdots \le c(\ell_h). $$
\item[(b)]
If $c(i)=1$, then for any $\ell \in \{1,\ldots, d\}$ which lies along a geodesic from $0$ to $i$ in $H$, $c(\ell)=1$ as well.
\item[(c)]
If $c(i)=1$, then there is a unique shortest path in $H$ from $0$ to $i$ and, for $(a,b)\in R_i$, there is a unique
shortest path in $\Gamma$ from $a$ to $b$.
\end{itemize}
\end{proposition}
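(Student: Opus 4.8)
The plan is to read $c(i)$ as a count of \emph{predecessors} and to reduce parts (b) and (c) to the monotonicity in part (a). First I would record the combinatorial meaning of the parameter: if $(a,b)\in R_i$ and $h=d_H(0,i)=d_\Gamma(a,b)$ (the latter equality by Lemma~\ref{LdiamGamma}), then unwinding the definition of $p_{1j}^i$ shows
$$ c(i)=\sum_{j\in I_{h-1}}p_{1j}^i=\bigl|\{\,c\in\Gamma(b): d_\Gamma(a,c)=h-1\,\}\bigr|, $$
the number of neighbours of $b$ lying one shell closer to $a$; by commutativity this equals the number of neighbours of $a$ one shell closer to $b$. The base case $c(1)=1$ is immediate, since $I_0=\{0\}$ and $p_{10}^1=1$.

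For the monotonicity in (a), I would lift the geodesic. By Proposition~\ref{Pphi} the geodesic $0,1,\ell_2,\dots,\ell_h$ in $H$ lifts to a walk $a=b_0,b_1,\dots,b_h$ in $\Gamma$ with $b_t\in R_{\ell_t}(a)$; since projected and original lengths agree and $d_\Gamma(a,b_t)=d_H(0,\ell_t)=t$, this is a geodesic in $\Gamma$. Fixing $s$ and writing $b'=b_s$, $b=b_{s+1}$ (so $b'\sim b$), the goal becomes $c(\ell_s)\le c(\ell_{s+1})$, i.e.\ a comparison of the predecessor sets $P'=\{x\in\Gamma(b'): d_\Gamma(a,x)=s-1\}$ and $P=\{c\in\Gamma(b): d_\Gamma(a,c)=s\}$. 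The useful structural fact is that, for any $x\in P'$, one has $d_\Gamma(x,b)=2$ (since $d_\Gamma(x,b)\ge d_\Gamma(a,b)-d_\Gamma(a,x)=2$ and $d_\Gamma(x,b)\le d_\Gamma(x,b')+1=2$), and moreover every common neighbour $c$ of $x$ and $b$ satisfies $d_\Gamma(a,c)=s$, because $s=d_\Gamma(a,b)-1\le d_\Gamma(a,c)\le d_\Gamma(a,x)+1=s$. Hence the common neighbours of any $x\in P'$ with $b$ all lie in $P$, and $b'$ is always one of them.

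The crux --- and the step I expect to be the main obstacle --- is to turn this into the inequality $|P'|\le|P|$. A naive double count of incident pairs $\{(x,c):x\in P',\,c\in P,\,x\sim c\}$ only reproduces $|P'|$ on both sides (the term $c=b'$ already accounts for all of $P'$), so it does not close the gap. Instead I would seek an injection $P'\hookrightarrow P$ sending each $x$ to a suitably chosen common neighbour of $x$ and $b$; securing injectivity is exactly the delicate point, and I expect it to require a Hall-type matching argument whose marriage condition is verified using the regularity of the scheme (the number of common neighbours of $x$ and $b$ depending only on the relation $(x,b)\in R_r$, $r\in I_2$), mirroring the classical monotonicity $c_1\le c_2\le\cdots$ for distance-regular graphs.

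Granting (a), parts (b) and (c) are short. For (b), any $\ell$ lying on a geodesic from $0$ to $i$ occurs as an intermediate vertex $\ell_t$ of some geodesic $0,1,\dots,\ell,\dots,i$; applying (a) to that geodesic gives $1=c(1)\le c(\ell)\le c(i)=1$ (note $c(\ell)\ge1$ always, as a vertex at positive distance has at least one predecessor), whence $c(\ell)=1$. For (c), suppose $c(i)=1$ with $i\in I_h$. Then $\sum_{j\in I_{h-1}}p_{1j}^i=1$ forces a unique index $\ell\in I_{h-1}$ with $p_{1\ell}^i=1$, so every geodesic $0\to i$ in $H$ has penultimate vertex $\ell$; by (b) $c(\ell)=1$, and induction on $h$ yields a unique geodesic in $H$. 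Likewise, for $(a,b)\in R_i$, the equality $\bigl|\{c\in\Gamma(b):d_\Gamma(a,c)=h-1\}\bigr|=1$ gives a unique predecessor vertex $b'$ of $b$, and since $(a,b')\in R_\ell$ with $c(\ell)=1$, the same induction produces a unique shortest $a$--$b$ path in $\Gamma$.
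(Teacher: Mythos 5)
Your reading of $c(i)$ as a predecessor count and your derivations of (b) and (c) from (a) are correct, and they match the paper's intent (the paper likewise treats (b) and (c) as immediate consequences of (a)). But part (a) --- the entire content of the proposition --- is not actually proved. You set up the comparison between the predecessor sets $P'$ and $P$ of two \emph{different} vertices $b_s$ and $b_{s+1}$, both measured from the fixed basepoint $a$, observe correctly that no containment or naive double count closes the gap, and then defer the key inequality $|P'|\le|P|$ to a ``Hall-type matching argument'' that you never carry out. That is a genuine gap: knowing that each $x\in P'$ has $p_{11}^r\ge 1$ common neighbours with $b_{s+1}$, all lying in $P$, says nothing about injectivity (a priori all of $P'$ could share the single common neighbour $b_s$), and I do not see how the scheme's regularity would verify Hall's condition for the bipartite graph you describe. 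Moreover, the classical monotonicity $c_1\le c_2\le\cdots$ for distance-regular graphs that you invoke is \emph{not} proved by matching; it is proved by a set containment, which is exactly the idea you are missing.

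The fix is a change of viewpoint: fix the far vertex and move the basepoint. For $(a,b)\in R_{\ell_h}$ choose, using $p_{1,\ell_{h-1}}^{\ell_h}>0$, a neighbour $a'$ of $a$ with $(a',b)\in R_{\ell_{h-1}}$. By the triangle inequality, every neighbour $x$ of $b$ with $d_\Gamma(x,a')=h-2$ satisfies $h-1 \le d_\Gamma(x,a)\le (h-2)+1$, so
$$\{\,x\in\Gamma(b) : d_\Gamma(x,a')=h-2\,\} \subseteq \{\,x\in\Gamma(b) : d_\Gamma(x,a)=h-1\,\},$$
and by commutativity these sets have sizes $c(\ell_{h-1})$ and $c(\ell_h)$ respectively; applying this to each prefix of the geodesic (every prefix is again a geodesic) gives (a). This one-line containment is precisely the paper's proof. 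Equivalently, you could have stayed within your own setup by using the dual count you yourself recorded: $c(\ell_s)$ is also the number of neighbours of $a$ at distance $s-1$ from $b_s$. Then both sets under comparison --- the neighbours of $a$ one step closer to $b_s$, and those one step closer to $b_{s+1}$ --- live inside the single neighbourhood $\Gamma(a)$, and the first is contained in the second by the same triangle-inequality computation. Either way, no injection ever needs to be constructed.
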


\begin{proof}
For part (a), observe that for $(a,b)\in R_{\ell_h}$ there exists $a' \in R_{\ell_{h-1}}(b)$ adjacent  to $a$ since
$p_{1,\ell_{h-1}}^{\ell_h} > 0$ so that 
$$ \{ x\in X \mid (x,b) \in R_1, \ d_\Gamma(x,a') = d_\Gamma(b,a')-1 \} \! \subseteq  \! 
\{ x\in X \mid (x,b) \in R_1, \ d_\Gamma(x,a) = d_\Gamma(b,a)-1 \}. $$ 
Parts (b) and (c) follow immediately. $\Box$
\end{proof}

\medskip

For $a,b\in X$, we define the \emph{interval} $[a,b]$ to be the union of the vertex sets of all geodesics in 
$\Gamma$ from $a$ to $b$:
$$ [a,b] = \left\{ x\in X \mid d_\Gamma(a,x) + d_\Gamma(x,b) = d_\Gamma(a,b) \right\}. $$

For the purpose of the present discussion, we introduce a piece of terminology. For $x\in X$ and 
$y\in T\subseteq X$,  we say that $x$ is \emph{proximal} to $y$ (relative to $T$) if 
$d_\Gamma(x,y) \le d_\Gamma(x,y')$ for all $y'\in T$. Vertex $x$ is then \emph{proximal only} to $y\in T$ if 
$d_\Gamma(x,y) < d_\Gamma(x,y')$ for all $y'\in T$ distinct from $y$.

\begin{proposition}
\label{Pci=1}
Let $T$ be a disconnecting set for $\Gamma$  and let $x$ and $z$ be vertices lying in different components of  $\Gamma \setminus T$  with $(x,z)\in R_i$. Suppose there is some $y\in T$ with $z \sim y$.
If either $x$ is proximal only to $y$ or $x$ is proximal to $y$ and $z$ is proximal only to $y$,  then $c(i)=1$.
\end{proposition}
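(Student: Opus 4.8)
The plan is to reinterpret $c(i)$ combinatorially and then show that the only vertex it can count is $y$ itself. Set $h = d_\Gamma(x,z)$, so that $i \in I_h$ by Lemma~\ref{LdiamGamma}, and recall that for $(x,z)\in R_i$ the quantity $c(i)$ is exactly the number of \emph{down-neighbors} of $z$ toward $x$: neighbors $w\sim z$ with $d_\Gamma(x,w)=h-1$. At least one such $w$ exists (the penultimate vertex of any geodesic from $x$ to $z$), so $c(i)\ge 1$, and it suffices to prove that every down-neighbor of $z$ toward $x$ equals $y$; equivalently, that $y$ is the penultimate vertex of every geodesic from $x$ to $z$. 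Two preliminary observations drive everything: first, $z\sim y$ forces $d_\Gamma(x,y)\ge h-1$; second, since $x$ and $z$ lie in different components of $\Gamma\setminus T$, every geodesic from $x$ to $z$ (each of length $h$) must contain a vertex of $T$, while $x,z\notin T$.

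For the case where $x$ is proximal only to $y$, I would track the \emph{first} vertex $y^\ast$ of $T$ met along a geodesic $x=x_0,\dots,x_h=z$, say $y^\ast = x_{k}$, so that $d_\Gamma(x,y^\ast)=k\le h-1$. Since $x$ is proximal only to $y$, we get $d_\Gamma(x,y)\le d_\Gamma(x,y^\ast)=k\le h-1$, with strict inequality unless $y^\ast=y$. Combining with $d_\Gamma(x,y)\ge h-1$ pins everything down: $d_\Gamma(x,y)=h-1$, $k=h-1$, and $y^\ast=y$. Thus $y$ sits at position $h-1$, i.e.\ it is the penultimate vertex of the geodesic.

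For the case where $x$ is proximal to $y$ and $z$ is proximal only to $y$, I would instead track the \emph{last} $T$-vertex $\hat y = x_m$ on a geodesic; since a suffix of a geodesic is a geodesic, $d_\Gamma(z,\hat y)=h-m$. Here $z$ proximal only to $y$ together with $z\sim y$ gives $1=d_\Gamma(z,y)\le d_\Gamma(z,\hat y)=h-m$, so $m\le h-1$, while $x$ proximal to $y$ gives $d_\Gamma(x,y)\le d_\Gamma(x,\hat y)=m$. With $d_\Gamma(x,y)\ge h-1$ this forces $d_\Gamma(x,y)=h-1=m$, so $\hat y=x_{h-1}$ is adjacent to $z$; then $d_\Gamma(z,\hat y)=1=d_\Gamma(z,y)$, and the ``proximal only'' condition at $z$ forces $\hat y=y$. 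Again $y$ is penultimate.

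In both cases every geodesic from $x$ to $z$ ends $\dots, y, z$. Any down-neighbor $w$ of $z$ toward $x$ yields a geodesic (a shortest $x$--$w$ path followed by the edge $wz$) whose penultimate vertex is $w$, so $w=y$; hence $c(i)=1$, as desired. I expect the main obstacle to be exactly the bookkeeping that isolates $y$ at the penultimate position: the two cases require opposite choices (the \emph{first} versus the \emph{last} $T$-vertex on the geodesic), and one must combine the appropriate proximality inequality with the lower bound $d_\Gamma(x,y)\ge h-1$ in just the right way, while keeping track of the boundary facts $x,z\notin T$ and that each geodesic genuinely meets $T$.
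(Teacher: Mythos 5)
Your proof is correct and is essentially the paper's own argument: the paper's entire proof reads ``Apply the triangle inequality,'' and your two-case analysis---combining the bound $d_\Gamma(x,y)\ge d_\Gamma(x,z)-1$ (from $z\sim y$) with the proximality inequalities applied to the first, respectively last, $T$-vertex on an $x$--$z$ geodesic---is exactly the fleshed-out version of that hint. Your combinatorial reading of $c(i)$ as the number of neighbors of $z$ at distance $d_\Gamma(x,z)-1$ from $x$ (valid by symmetry of the scheme and Lemma~\ref{LdiamGamma}) is also the interpretation the paper itself uses, e.g.\ in Lemma~\ref{Pfarawayx}.
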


\begin{proof}
Apply the triangle inequality. $\Box$ 
\end{proof}

\subsection{Small disconnecting sets}
\label{Sec:smallcut}

We begin by examining a simple condition which guarantees that $\Gamma$ is locally a disjoint union of 
cliques.

\begin{lemma}
\label{PK211}
Let $T$ be a disconnecting set for $\Gamma$, $y\in T$. Suppose $d_\Gamma(y,y')\ge 3$ for all $y'\in T$ with $y'\neq y$. Then $\Gamma$ is $K_{2,1,1}$-free.
\end{lemma}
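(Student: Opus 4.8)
The plan is to argue by contradiction. Suppose $\Gamma$ contains an induced copy of $K_{2,1,1}$: that is, there are four vertices $p,q,r,s$ where $p,q,r,s$ induce the graph $K_{2,1,1}$, meaning two of them (say $p$ and $q$) are non-adjacent to each other but each is adjacent to both $r$ and $s$, and $r \sim s$. So $\{r,s\}$ together with the non-edge $\{p,q\}$ forms the forbidden subgraph. The goal is to derive a contradiction with the hypothesis that $T$ is a disconnecting set whose elements are pairwise at distance at least three. Since $T$ disconnects $\Gamma$, there exist two vertices lying in different components of $\Gamma \setminus T$; the key will be to show that the local structure forced by $K_{2,1,1}$ is incompatible with the spread-out nature of $T$.

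First I would fix the induced $K_{2,1,1}$ on vertices $\{p,q,r,s\}$ as above and examine how $T$ can meet $a^\bot$-type neighborhoods of these vertices. The heart of the matter is that in an induced $K_{2,1,1}$, the two vertices $r,s$ of the ``triangle part'' are mutually adjacent and share two common neighbors $p,q$ which are themselves non-adjacent. I would then use the hypothesis $d_\Gamma(y,y') \ge 3$ for distinct $y,y' \in T$: this means no two elements of $T$ can both lie within the closed neighborhood of a single vertex, and more usefully, any two elements of $T$ are far apart. The plan is to show that if such a $K_{2,1,1}$ exists, one can route paths around $T$ — using the tight clustering of $p,q,r,s$ (all within distance two of one another) against the fact that $T$ is sparse — to contradict either that $T$ disconnects $\Gamma$ or that its elements are spread out.

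More concretely, I would invoke the projection/homomorphism machinery of Proposition~\ref{Pphi} and Lemma~\ref{LdiamGamma} together with Corollary~\ref{C1}. The likely route: the induced $K_{2,1,1}$ exhibits a configuration of vertices all mutually within distance two, so they all lie in a small ball; but any disconnecting set with pairwise distances $\ge 3$ can contain at most one vertex of any such ball, and I would leverage Corollary~\ref{C1} or the local connectivity reasoning (as in the remark preceding Proposition~\ref{Psptree}, where connected neighborhoods plus far-apart cut vertices force connectivity) to conclude that removing $T$ cannot actually separate the two components whose existence is needed. The contrapositive form is cleaner: I would show directly that the presence of an induced $K_{2,1,1}$ together with $T$ forces two vertices in ostensibly different components to be joined by a path avoiding $T$, since the extra adjacencies in $K_{2,1,1}$ provide detours, while the $d_\Gamma(y,y') \ge 3$ condition guarantees at most one element of $T$ obstructs any given short detour.

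The main obstacle, I expect, is correctly tracking which of the four $K_{2,1,1}$-vertices can simultaneously be blocked by elements of $T$, and ensuring that the distance-$\ge 3$ hypothesis genuinely prevents a single configuration from being cut in two places at once. The subtlety is combinatorial bookkeeping: a priori several vertices of $T$ could cluster near the $K_{2,1,1}$, and I must use the pairwise-distance bound to rule out more than one nearby $T$-vertex and then show that one $T$-vertex is not enough to sever all the redundant connections that $K_{2,1,1}$ supplies. I would handle this by case analysis on the position of a nearest element $y \in T$ relative to $\{p,q,r,s\}$, using that the diameter of this four-vertex configuration is two so that a second element of $T$ within distance two of it would violate $d_\Gamma(y,y') \ge 3$.
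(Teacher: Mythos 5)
There is a genuine gap here, and it is structural rather than a matter of bookkeeping. Your whole argument is local graph theory around a hypothetical induced $K_{2,1,1}$: you hope the four vertices supply detours that $T$ cannot block, contradicting the fact that $T$ disconnects $\Gamma$. But nothing places the induced $K_{2,1,1}$ anywhere near $T$; it can sit deep inside a single component of $\Gamma\setminus T$, where there is nothing to detour around, so no contradiction with ``$T$ disconnects $\Gamma$'' can ever materialize. Indeed the statement you are trying to prove by pure routing is false for general graphs: take a long cycle, glue a copy of $K_4$ minus an edge onto one of its vertices, and let $T$ consist of two cycle vertices far from each other and from the gadget. Then $T$ disconnects the graph, all the relevant distances are at least $3$, and yet the graph contains an induced $K_{2,1,1}$. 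So any correct proof must use the association scheme structure in an essential way, and your sketch never actually does (the appeals to Proposition~\ref{Pphi}, Lemma~\ref{LdiamGamma} and Corollary~\ref{C1} are not attached to any concrete step). A secondary error: you repeatedly assume the elements of $T$ are \emph{pairwise} at distance at least $3$, but the hypothesis only bounds the distances from the one distinguished vertex $y$ to the other members of $T$, which may be mutually close.

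The paper argues in the opposite direction: it starts at the cut and lets the scheme propagate the conclusion globally. Fix $j\in I_2$ and $z\in R_j(y)$ (note $z\notin T$, since $d_\Gamma(y,z)=2$ while every other member of $T$ is at distance at least $3$ from $y$), and pick a neighbor $x$ of $y$ lying in a component of $\Gamma\setminus T$ other than the one containing $z$. The distance hypothesis makes $x$ proximal only to $y$, and Proposition~\ref{Pci=1} (a triangle-inequality argument exploiting that every $xz$-path must cross $T$) yields $c(i)=1$ for $(x,z)\in R_i$, hence $c(j)=1$, i.e.\ $p_{11}^j=1$. Now comes the step your plan is missing: $p_{11}^j$ is an intersection number, so this single computation made next to $T$ asserts that \emph{every} pair of vertices in relation $R_j$, anywhere in $\Gamma$, has exactly one common neighbor. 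Since the two nonadjacent vertices of an induced $K_{2,1,1}$ lie at distance $2$ and have two common neighbors, no such induced subgraph can exist anywhere in $\Gamma$. Without this local-to-global transfer through the constancy of $p_{11}^j$, the lemma is simply not true, which is why your approach cannot be completed.
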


\begin{proof}
Let $j\in I_2$ and let $z \in R_j(y)$. Let $x \sim y$ be some vertex lying in a different component of 
 $\Gamma \setminus T$ from that containing $z$. For $(x,z)\in R_i$, we find $c(i)=1$. So $c(j)=1$ by Proposition
 \ref{Pci=1}.  $\Box$ 
\end{proof}

\begin{lemma}
\label{Pfarawayx}
Let $T$ be a disconnecting set for $\Gamma$, $y\in T$.
\begin{itemize}
\item[(a)] Let $x$ and $z$ be vertices lying in different components of $\Gamma \setminus T$. If
$d_\Gamma(x,y')+d_\Gamma(y',z) > D$ for every $y'\in T$ except $y$, then $z$ has a 
unique neighbor lying closer to $x$ and $z$ has a unique neighbor lying closer to $y$.
\item[(b)] Suppose $x\in X$ satisfies $d_\Gamma(x,y')=D$ for every $y'\in T$ except $y$. If $z\in X$ lies in a component of $\Gamma \setminus T$ distinct from that containing $x$, then $z$ has a unique neighbor 
lying closer to $x$ and $z$ has a unique neighbor lying closer to $y$. 
\end{itemize}
In both cases, for $(x,z)\in R_i$, and $(y,z)\in R_j$,  we have $c(i)=c(j)=1$.
\end{lemma}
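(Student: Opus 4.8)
Throughout write $d_1 = d_\Gamma(x,y)$ and $d_2 = d_\Gamma(y,z)$, and recall from the definition of $c(\cdot)$ that, for $(a,b)\in R_\ell$, the integer $c(\ell)$ counts the neighbors of $b$ lying one step closer to $a$ (equivalently, by symmetry, the neighbors of $a$ lying one step closer to $b$). Thus the two assertions of the lemma are literally the statements $c(i)=1$ and $c(j)=1$, and the final displayed sentence merely restates them. The plan is to prove both parts at once by reducing them to a single geodesic fact and then reading off uniqueness from Proposition \ref{Lc(i)}.

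First I would show that, under either hypothesis, $y$ lies on every geodesic of $\Gamma$ from $x$ to $z$, so that $d_\Gamma(x,z)=d_1+d_2$. Since $x$ and $z$ lie in different components of $\Gamma\setminus T$, every $xz$-geodesic meets $T$ in some vertex $y''$, and then $d_\Gamma(x,z)=d_\Gamma(x,y'')+d_\Gamma(y'',z)$. In case (a) the hypothesis gives $d_\Gamma(x,y'')+d_\Gamma(y'',z)>D\ge d_\Gamma(x,z)$ whenever $y''\neq y$, which is absurd; in case (b), $d_\Gamma(x,y'')=D$ forces $d_\Gamma(y'',z)=0$, i.e.\ $y''=z\notin T$, equally absurd. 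Hence $y''=y$; as every $xz$-geodesic contains at least one vertex of $T$, each contains $y$. This geodesic-crossing step is the only point at which the two hypotheses are used, and it must be carried out separately (but routinely) for (a) and (b).

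The crux is a reduction to the adjacent situation. Fix one $xz$-geodesic and let $w$ be the vertex immediately after $y$, so $w\sim y$, the vertex $w$ lies on an $xz$-geodesic, and $d_\Gamma(x,w)=d_1+1$. I claim $y$ is the unique neighbor of $w$ closer to $x$, that is $c(i^*)=1$ for $(x,w)\in R_{i^*}$: given any $xw$-geodesic $G$, concatenating $G$ with a fixed $wz$-geodesic produces a walk of length $d_1+d_2=d_\Gamma(x,z)$, hence an $xz$-geodesic, which by the previous paragraph passes through $y$; as $y$ sits at distance $d_1=d_\Gamma(x,w)-1$ from $x$ and $w\sim y$, the predecessor of $w$ on $G$ can only be $y$. (This is the mechanism of Proposition \ref{Pci=1}, but phrased so as to need no proximality check and no component membership for $w$.) By Proposition \ref{Lc(i)}(c) there is then a unique $xw$-geodesic; it runs through $y$, and any second $xy$-geodesic would extend along the edge from $y$ to $w$ to a second $xw$-geodesic. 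So the $xy$-geodesic is unique and $c(m)=1$ for $(x,y)\in R_m$.

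Finally I would transfer this to $z$. Because $y$ lies on every $xz$-geodesic, a neighbor of $x$ is one step closer to $z$ exactly when it is one step closer to $y$ (each condition places that neighbor at distance $d_1-1$ from $y$ on a geodesic through $y$); so these two neighbor sets coincide, and since the second has size $c(m)=1$ we obtain $c(i)=1$. Proposition \ref{Lc(i)}(c) then yields a unique $xz$-geodesic; it factors through the unique $xy$-geodesic, so its segment from $y$ to $z$ is the unique $yz$-geodesic, giving $c(j)=1$. I expect the main obstacle to be resisting the tempting but circular induction on $d_\Gamma(y,z)$: uniqueness of the immediate predecessor at $z$ does not follow from uniqueness at nearer vertices. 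The vertex $w$ is exactly what breaks this circularity, anchoring the argument at the adjacent case, where $y$ is forced to be the predecessor.
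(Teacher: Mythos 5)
Your proof is correct and takes essentially the same route as the paper's: both arguments first show that every $xz$-geodesic must pass through $y$, then anchor the uniqueness claim at a neighbor of $y$ on the $z$-side (your $w$ is the paper's $z'\in\Gamma(y)\cap[y,z]$), and finally transfer back to the pairs $(x,z)$ and $(y,z)$ using the symmetry of $c(\cdot)$. The differences are cosmetic --- the paper phrases the transfer via interval containments such as $\Gamma(x)\cap[x,z]=\Gamma(x)\cap[x,z']$ and $\Gamma(z)\cap[y,z]\subseteq\Gamma(z)\cap[x,z]$, where you invoke uniqueness of geodesics from Proposition~\ref{Lc(i)}(c), and your concatenation argument at $w$ spells out explicitly why the adjacent case applies there, a point the paper's terser proof leaves to the reader.
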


\begin{proof}
Clearly (b) follows from (a). So first verify (a) for the case $z\sim y$. Next, 
observe that any geodesic joining $x$ to $z$ passes through $y$. So $ [x,z] = [x,y] \cup [y,z]$.
Let $z' \in \Gamma(y) \cap [y,z]$. Since $[x,y] 
\subseteq [x,z]$ and $[x,z'] = [x,y] \cup \{z'\}$, 
we find $\Gamma(x) \cap [x,z] = \Gamma(x) \cap [x,z']$, a set of size one. 
By the same token, $[y,z] \subseteq [x,z]$ and so $\Gamma(z) \cap [y,z] \subseteq 
\Gamma(z) \cap [x,z]$ gives $|\Gamma(z) \cap [y,z] |=1$.  $\Box$
\end{proof}

\begin{lemma}
\label{Lxp111}
Let $T$ be a disconnecting set for $\Gamma$, $y\in T$,  and suppose $x\in X$ satisfies $d_\Gamma(x,y')=D$ for every $y'\in T$ except $y$. Then
\begin{itemize}
\item[(a)]
 for $(x,y)\in R_i$ where $i\in I_h$, we have
$ \sum_{\ell \in I_h} p_{1\ell}^i = p_{11}^1$.
\item[(b)]
for $z \in X\setminus T$ which is separated from $x$ by deletion of $T$, if 
$\Gamma(z) \cap T \subseteq \{y\}$, then
$ \sum_{\ell \in I_k} p_{1\ell}^j = p_{11}^1$ where $(y,z)\in R_j$ with $j\in I_k$.
\end{itemize}
\end{lemma}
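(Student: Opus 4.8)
The two parts assert the same phenomenon from symmetric viewpoints: writing $h=d_\Gamma(x,y)$ in part (a) and $k=d_\Gamma(y,z)$ in part (b), the sum $\sum_{\ell\in I_h}p_{1\ell}^i$ (resp.\ $\sum_{\ell\in I_k}p_{1\ell}^j$) counts exactly the neighbors of $x$ (resp.\ of $z$) that remain at the same distance from $y$ --- the ``lateral'' neighbors along the spine toward $y$. The plan is to treat both through a single mechanism: first pin down the geodesic structure forced by the extremal hypothesis, then equate the lateral count with the number $p_{11}^1$ of common neighbors of one spine edge.

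The structural input I would establish first is that every geodesic from $x$ to a vertex separated from it by $T$ must pass through $y$. Indeed, a geodesic crossing $T$ at some $y'\neq y$ would have length at least $d_\Gamma(x,y')+1=D+1$, exceeding the diameter; this is precisely the device behind Lemma~\ref{Pfarawayx}. Consequently $d_\Gamma(x,w)=d_\Gamma(x,y)+d_\Gamma(y,w)$ for every such $w$. Choosing any $z^\ast$ in another component of $\Gamma\setminus T$, Lemma~\ref{Pfarawayx} gives $c(\cdot)=1$ for the pair $(x,z^\ast)$, so by Proposition~\ref{Lc(i)} there is a unique $x$--$z^\ast$ geodesic. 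Since $y$ lies on it, projecting that geodesic through $\varphi_x$ shows the index of $(x,y)$ lies on a shortest $0$--$\varphi_x(z^\ast)$ walk in $H$, whence Proposition~\ref{Lc(i)}(b) propagates $c(\cdot)=1$ back to the index $i$ of $(x,y)$; thus $x$ has a unique neighbor $c_0$ toward $y$, proving the auxiliary claim $c(i)=1$ of part (a). In part (b), the condition $\Gamma(z)\cap T\subseteq\{y\}$ together with the extremal hypothesis on $x$ routes every $z$--$x$ geodesic through $y$ as well, and Lemma~\ref{Pfarawayx} hands us $c(j)=1$ directly, so $z$ has a unique neighbor $z_0$ toward $y$.

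With the spine edge $\{x,c_0\}$ (resp.\ $\{z,z_0\}$) in hand, one inclusion is immediate: each of the $p_{11}^1$ common neighbors $w$ of this edge satisfies $d_\Gamma(w,y)\in\{h-1,h\}$ by the triangle inequality, and the value $h-1$ is excluded because $c_0$ is the \emph{unique} neighbor of $x$ toward $y$; hence $w$ is lateral, giving $\sum_{\ell\in I_h}p_{1\ell}^i\ge p_{11}^1$. The reverse inequality is the crux and the step I expect to fight hardest for: I must show that \emph{every} lateral neighbor $c$ of $x$ is in fact adjacent to $c_0$, so that it lies in the common neighborhood of the spine edge. My plan is to argue by contradiction, noting that $c\not\sim c_0$ forces the first step of a shortest $c$--$y$ path to be a vertex $c'\neq c_0$ with $d_\Gamma(x,c')=2$ and $d_\Gamma(c',y)=h-1$, and then to extract from this extremal configuration either a second shortest path toward a vertex in another component or a forbidden local $K_{2,1,1}$ along the spine, contradicting the uniqueness of geodesics through $y$ established above. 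The delicate point is that the extremal distance condition is exactly what prevents a lateral neighbor from ``escaping'' the spine clique toward some other element of $T$; keeping careful track of which vertices of $T$ each candidate neighbor can be proximal to (in the sense of Proposition~\ref{Pci=1}) is where the argument must be most careful. Part (b) then follows by the identical counting applied to the edge $\{z,z_0\}$.
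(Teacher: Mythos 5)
Your lower bound is sound: the $p_{11}^1$ common neighbors of the spine edge $\{x,c_0\}$ are all lateral, by exactly the uniqueness argument you give. But the reverse inequality --- which you yourself flag as the crux --- is left as a hope, and the tools you propose for filling it are not available. First, $K_{2,1,1}$-freeness is \emph{not} a hypothesis of Lemma \ref{Lxp111}; it only becomes available (via Lemma \ref{PK211}) when distinct elements of $T$ are pairwise at distance at least $3$, which is not assumed here, so "a forbidden local $K_{2,1,1}$ along the spine" cannot be invoked. Second, the geodesic rigidity you establish applies to geodesics issuing from $x$ (or from far-side vertices), not to a lateral neighbor $c$ of $x$: the hypothesis $d_\Gamma(x,y')=D$ only gives $d_\Gamma(c,y')\ge D-1$, so Lemma \ref{Pfarawayx} and Proposition \ref{Pci=1} say nothing about $c$, and a configuration with $c\not\sim c_0$ lives entirely on the near side of $T$, where no rigidity has been established. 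Note also that, given your lower bound, the claim "every lateral neighbor of $x$ is adjacent to $c_0$" is \emph{equivalent} to the lemma, so your plan defers the entire content of the statement to an unproved assertion; your part (b), which you dismiss as "identical counting," inherits the same gap.

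The paper's proof avoids the near side altogether and this is the idea you are missing. Take $z\sim y$ separated from $x$. Since every $x$--$z$ geodesic is routed through $y$, each neighbor of $z$ other than $y$ lies at distance at least $h+1$ from $x$, and those at distance exactly $h+1$ are precisely the $p_{11}^1$ common neighbors of the edge $\{y,z\}$; hence $\sum_{\ell\in I_{h+1}}p_{1\ell}^{j}=p_{11}^1$ where $(x,z)\in R_j$. Now comes the key mechanism: because $p_{1\ell}^{j}$ simultaneously counts neighbors of $z$ in relation $R_\ell$ to $x$ and neighbors of $x$ in relation $R_\ell$ to $z$, the same identity says $x$ has exactly $p_{11}^1$ neighbors at distance $h+1$ from $z$; since $d_\Gamma(x',z)=d_\Gamma(x',y)+1$ for $x'\sim x$, these are exactly the lateral neighbors of $x$, proving (a) with no clique structure whatsoever. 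Part (b) is then deduced from (a) by a second reversal of the same kind: (a) pins down the number of neighbors of $x$ at distance $h+k$ from a general far-side vertex $z$ with $\Gamma(z)\cap T\subseteq\{y\}$, and reversing once more counts the neighbors of $z$ at distance $k$ from $y$. Counting where the routing-through-$y$ property makes distances rigid, and transferring that count across the pair by the symmetry of the intersection numbers, is what your near-side-only argument has no substitute for.
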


\begin{proof}
Let $z$ be a neighbor of $y$ which is separated from $x$ by deletion of $T$.
Since $d_\Gamma(x,z) \le D$, we see that $x$ is proximal only to $y$ and $[x,z]=[x,y] \cup \{z\}$.  
The set  $\Gamma(y)\cap \Gamma(z)$ has size $p_{11}^1$ and every $z' \in \Gamma(y)\cap \Gamma(z)$ 
lies at distance $h+1$  from $x$ in $\Gamma$. Since every other neighbor of $z$, with the exception 
of $y$, is further away from  $x$, we have $\sum_{ \ell \in I_{h+1} } p_{1 \ell}^j = p_{11}^1$ where 
$(x,z)\in R_j$. Reversing roles, we
see that $x$ then has exactly $p_{11}^1$ neighbors which lie at distance $h+1$ from $z$. But, for
$x' \sim x$, $d_\Gamma(x',y)=d_\Gamma(x',z)-1$. This gives (a). To obtain (b), observe that 
every neighbor $x'$ of $x$ with $d_\Gamma(x',z)=d_\Gamma(x,z)$ must have  $d_\Gamma(x',y)=
d_\Gamma(x,y)$. By part (a), there are exactly $p_{11}^1$ such vertices. So, for $(x,z)\in R_s$, 
$ \sum_{\ell \in I_{h+k}} p_{1\ell}^s = p_{11}^1$. Reversing roles, we see that exactly $p_{11}^1$
neighbors of $z$ lie at distance $h+k$ from $x$. But this is precisely the set of vertices adjacent 
to $z$ which lie at distance $k$ from $y$.  $\Box$
\end{proof}

\begin{theorem}
\label{Tcutsize2}
Let $(X,\cR)$ be a symmetric association scheme and let $\Gamma=(X,R_1)$  be the graph 
associated to a connected basis relation. If $\Gamma$ admits a  disconnecting set of size two, 
then $\Gamma$ is isomorphic to a polygon.
\end{theorem}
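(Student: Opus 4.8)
The plan is to prove the reduction that a connected basis relation of vertex connectivity two must be $2$-regular; a connected $2$-regular graph is a cycle $C_n$, and each $C_n$ occurs as the relation $R_1$ of the cyclic scheme on $\ints_n$, so this identifies $\Gamma$ as a polygon. Since $\Gamma$ is connected and admits a disconnecting set of size two, its valency satisfies $v_1\ge 2$; moreover, applying Corollary~\ref{C1} with $T=\{c\}\subseteq a^\bot$ for a neighbor $a$ of any putative cut vertex $c$ (using $\Gamma(a)\not\subseteq\{c\}$, valid as $v_1\ge 2$) shows $\Gamma$ has no cut vertex. Hence I may fix a \emph{minimal} disconnecting set $T=\{y_1,y_2\}$, with neither vertex redundant, so that $\Gamma$ has connectivity exactly two.

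I would first clear away the two small-distance configurations. If $y_1\sim y_2$, then $T$ is the vertex set of a clique and Corollary~\ref{C3} makes $\Gamma\setminus T$ connected, a contradiction; thus $d_\Gamma(y_1,y_2)\ge 2$. If $d_\Gamma(y_1,y_2)=2$, choose a common neighbor $a$, so $\{y_1,y_2\}\subseteq\Gamma(a)\subseteq a^\bot$; were $v_1\ge 3$ we would have $\Gamma(a)\not\subseteq T$, and Corollary~\ref{C1} would again force $\Gamma\setminus T$ connected. So in this case $v_1=2$ and we are done. This leaves the main case $d_\Gamma(y_1,y_2)\ge 3$, where the metric machinery of Section~\ref{Sec:smallcut} becomes available.

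For $d_\Gamma(y_1,y_2)\ge 3$, Lemma~\ref{PK211} shows $\Gamma$ is $K_{2,1,1}$-free, hence locally a disjoint union of cliques of common size $p_{11}^1+1$, and Lemma~\ref{Lspec-cut} gives $p_{11}^1<|T|=2$, so $p_{11}^1\in\{0,1\}$. To finish I would take a vertex $x$ at distance $D$ (the diameter) from $y_2$ and study the distance partition about $y_1$. Lemma~\ref{Pfarawayx}(b) shows that every vertex $z$ separated from $x$ by $T$ has a unique neighbor nearer to $y_1$, and Lemma~\ref{Lxp111} shows such a $z$ has exactly $p_{11}^1$ neighbors in its own distance-layer about $y_1$; combined with $K_{2,1,1}$-freeness and the uniqueness of geodesics from Proposition~\ref{Lc(i)}(c), each such $z$ therefore has one backward neighbor, $p_{11}^1$ sideways neighbors, and $v_1-1-p_{11}^1$ forward neighbors. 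Counting edges between consecutive layers $n_h$ and $n_{h+1}$ then yields the recursion $n_{h+1}=(v_1-1-p_{11}^1)\,n_h$; as the cut and $d_\Gamma(y_1,y_2)\ge 3$ guarantee several nonempty layers while $\Gamma$ is finite, the constant factor must equal $1$, i.e.\ $v_1=2+p_{11}^1$. Finally, if $p_{11}^1=1$ then each neighborhood is a disjoint union of cliques of size $2$, forcing $v_1$ to be even and contradicting $v_1=3$; hence $p_{11}^1=0$, $v_1=2$, and $\Gamma$ is a polygon.

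The step I expect to be the main obstacle is this last endgame. Lemmas~\ref{Pfarawayx} and~\ref{Lxp111} control only vertices separated from the extremal vertex $x$, and Lemma~\ref{Lxp111}(b) further restricts to vertices meeting $T$ in at most $\{y_1\}$, so the layer-counting must be carried out carefully on one fragment of $\Gamma\setminus T$ and then transported to the other side and across the cut using the symmetry between $y_1$ and $y_2$; the neighbors of $y_1$ and $y_2$ themselves need separate bookkeeping. One must also confirm that the resulting $2$-regular graph is a single cycle rather than a disjoint union of cycles, which follows from connectedness but should be stated. Getting all these local counts to hold \emph{uniformly} over basepoints, rather than merely at the chosen extremal vertex, is where the real work of the proof lies.
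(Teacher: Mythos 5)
Your proposal follows the same route as the paper (Corollaries \ref{C1}/\ref{C3} to force $d_\Gamma(y_1,y_2)\ge 3$, then Lemma \ref{PK211}, then Lemmas \ref{Pfarawayx} and \ref{Lxp111}, ending with the clique-divisibility argument), but it has two genuine gaps. First, your main case silently assumes that you can choose a vertex $x\notin T$ at distance $D=\diam\Gamma$ from $y_2$. If the \emph{only} vertex at distance $D$ from $y_2$ is $y_1$ itself, no such $x$ exists and Lemma \ref{Pfarawayx}(b) cannot even be stated (nothing is ``separated from $x$''). This case is not vacuous: it is exactly what happens in an even polygon when $\{y_1,y_2\}$ is an antipodal pair, so it cannot be argued away --- it must be handled, and the paper devotes a separate argument to it (using that the number of vertices at distance $D$ from a basepoint is basepoint-independent, hence equal to $1$ for every vertex, and then a triangle-inequality count showing $y_1$ has exactly one neighbor on each side of the cut, so $v_1=2$).

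Second, the endgame does not close. The forward-degree count $v_1-1-p_{11}^1$ from Lemma \ref{Lxp111}(b) applies only to vertices $z$ with $\Gamma(z)\cap T\subseteq\{y_1\}$, so your recursion $n_{h+1}=(v_1-1-p_{11}^1)n_h$ fails in any layer containing neighbors of $y_2$, and ``$\Gamma$ is finite, hence the factor is $1$'' is not a valid inference: with factor $\ge 2$ the growth can simply terminate when the outermost layers consist of neighbors of $y_2$ (a set of size at most $v_1$), about which your lemmas say nothing. What is needed is control of the forward neighbors via the distance to $y_2$ as well; the paper achieves this by choosing $z\in B'$ maximizing $d_\Gamma(z,y_1)+d_\Gamma(z,y_2)$, so that every neighbor of $z$ farther from $y_1$ must be strictly closer to $y_2$, and then invoking the \emph{role-swapped} unique-neighbor statement (which itself requires producing a second extremal vertex $x'\neq y_2$ at distance $D$ from $y_1$ and verifying $x'$ lies in the same component $B$ as $x$). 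This yields $|\Gamma(z)|=1+p_{11}^1+1$ at that single extremal vertex, after which regularity and the clique partition give $(p_{11}^1+1)\mid(p_{11}^1+2)$, hence $p_{11}^1=0$ and $v_1=2$. You correctly identified this bookkeeping as ``where the real work lies,'' but the proposal leaves that work undone; your extra ingredient, Lemma \ref{Lspec-cut} giving $p_{11}^1\le 1$, does not substitute for it, since the missing step is precisely the identity $v_1=2+p_{11}^1$.
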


\begin{proof}
Let $T=\{y,y'\}$ be a disconnecting set of size two. Let  $D=\diam \Gamma$ and let $B$  be the vertex set of some connected component  of $\Gamma \setminus T$. First consider the 
case where $y'$ is the unique vertex at distance $D$ from $y$ in $\Gamma$. Then every vertex
is at distance $D$ from exactly one other vertex. On the other hand,  if $x\in B \cap \Gamma(y)$, then 
any neighbor of $y'$ not lying in $B$ must be at distance $D$ from $x$ by the triangle inequality. It follows that 
$y$ has exactly one neighbor not in $B$ and, symmetrically, exactly one neighbor in $B$. So the
graph has valency two in this special case.

By Corollary \ref{C1}, we have
$d_\Gamma(y,y')\ge 3$ so that $\Gamma$ is $K_{2,1,1}$-free by Lemma \ref{PK211}.   Let $x$ (resp., $x'$) 
denote some vertex  at distance $D$ from $y'$ (resp., $y$), with $x\neq y$, $x' \neq y'$. Let $B$ and $B'$ be the vertex sets of  two connected components  $\Xi$ and $\Xi'$, respectively, of $\Gamma \setminus T$ and assume
$x\in B$. By Lemma  \ref{Pfarawayx}(b), any $z\in B'$ has a unique neighbor lying closer to $y$.  By 
Lemma \ref{Lxp111}(a), any $z \in B' \setminus \Gamma(y')$ has exactly $p_{11}^1$ neighbors $z'$ satisfying
$d_\Gamma(z',y)=d_\Gamma(z,y)$.  Since $d_\Gamma(x,y)+d_\Gamma(y,x') > D$ and $d_\Gamma(x,y')
+d_\Gamma(y',x') > D$, we must have $x' \in B$ also. So we can swap the roles of $x$ and $x'$, $y'$ and $y$,
to find that any $z  \in B' \setminus \Gamma(y)$  has a unique neighbor closer to $y'$ and exactly $p_{11}^1$
neighbors $z'$ with  $d_\Gamma(z',y')=d_\Gamma(z,y')$. Now select $z\in B'$ so as to maximize 
$d_\Gamma(z,y) + d_\Gamma(z,y')$.  By Corollary \ref{C1}, $d_\Gamma(y,y') \ge 3$, so we may assume
$z$ is not adjacent to $y'$. Then $z$ has a unique neighbor lying closer to $y$ and exactly $p_{11}^1$ neighbors
$z'$ satisfying $d_\Gamma(z',y)=d_\Gamma(z,y)$. Since $z$ maximizes  $d_\Gamma(z,y) + d_\Gamma(z,y')$,
any neighbor of $z$ which lies farther away from $y$ must lie closer to $y'$. But there is exactly one such 
vertex. In all, we have $| \Gamma(z) | = 1 + p_{11}^1 + 1$.  But $\Gamma$ is $K_{2,1,1}$-free so the
neighborhood of any vertex is partitioned into cliques of size $p_{11}^1+1$. We find that $p_{11}^1+1$ 
divides $p_{11}^1+2$. This can only happen if $p_{11}^1=0$; i.e., $\Gamma$ is triangle-free. But then 
$z$ has degree two and $\Gamma$ must be a polygon.   $\Box$
\end{proof}

Our final two results deal with the special case where graph $\Gamma$ has diameter two.

\begin{theorem}
\label{Tdiam2}
Let $(X,\cR)$ be a symmetric association scheme and let $\Gamma=(X,R_1)$  be the graph 
associated to a connected basis relation. If $\Gamma$ has diameter two and $|X| > v_1(t-1)+2$, 
then  $\Gamma$ has vertex connectivity at least $t+1$ unless $t=v_1$.  
\end{theorem}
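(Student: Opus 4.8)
The plan is to prove the contrapositive: assuming $\Gamma$ has a disconnecting set $T$ of size at most $t$, I would show that either $|X| \le v_1(t-1)+2$ or $t = v_1$. So fix a minimal disconnecting set $T$ with $|T| \le t$, and let $B$ and $B'$ be the vertex sets of two components of $\Gamma \setminus T$. The first step is to exploit the diameter-two hypothesis together with Proposition \ref{Pxya}: any $x \in B$ and $z \in B'$ are non-adjacent, hence at distance exactly two, so they have a common neighbor, and every common neighbor must lie in $T$. This means each $x \in B$ sees, through $T$, every $z \in B'$; phrased as a counting statement, the edges from $B$ into $T$ and from $B'$ into $T$ must ``cover'' all the cross-pairs.

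The key quantitative step is a double count of common neighbors. For a fixed $x \in B$, every $z \in B'$ satisfies $\Gamma(x) \cap \Gamma(z) \subseteq T$, and this intersection is non-empty. Since $x$ has $v_1$ neighbors and at most $|T| \le t$ of them lie in $T$, the number of neighbors of $x$ inside $T$ is some $s \le t$. The plan is to bound $|B'|$ (and symmetrically $|B|$) by relating it to the subconstituent sizes $v_i$ and the number of available ``connecting'' vertices in $T$. I expect to argue that each vertex $y \in T \cap \Gamma(x)$ can be the unique or a shared link to only boundedly many vertices of $B'$, controlled by the intersection numbers, and that summing over the at most $t-1$ relevant vertices of $T$ (reserving one, in the spirit of the earlier single-basepoint arguments where one neighbor is always ``protected'') yields $|B| + |B'| \le v_1(t-1) + (\text{small correction})$. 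Combined with $|X| = |B| + |B'| + \cdots + |T|$, this should force $|X| \le v_1(t-1) + 2$ unless the degenerate case $t = v_1$ occurs, where $T$ can be an entire neighborhood $\Gamma(a)$ and the bound collapses.

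The main obstacle will be making the covering count tight enough to land exactly the threshold $v_1(t-1)+2$ rather than a weaker bound. The difficulty is that a single vertex $y \in T$ may simultaneously be adjacent to many vertices in $B'$, so naively each of the $t$ vertices of $T$ could contribute $v_1$ neighbors, giving only $|B'| \le v_1 t$. To sharpen this to $v_1(t-1)+2$, I would need to show that not all $t$ elements of $T$ can be ``fully used'' on the $B'$ side: because $T$ is a minimal disconnecting set and $\Gamma$ has diameter two, at least one vertex of $T$ must have a neighbor in \emph{every} component (otherwise it could be removed from $T$), which ties up part of its valency on the $B$ side. This is exactly the mechanism behind reserving one factor of $v_1$, and I expect the bookkeeping here — ensuring the correct $+2$ arises from the two ``anchor'' vertices $x$ and a reserved neighbor — to be the delicate part of the argument.

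I would close by handling the boundary case explicitly: when equality or near-equality holds, the structure is forced so tightly that either $T = \Gamma(a)$ for some vertex $a$ (giving $t = v_1$) or $|X|$ meets the stated bound, completing the contrapositive and hence the theorem.
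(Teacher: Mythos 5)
Your proposal correctly identifies the opening move (diameter two plus Proposition \ref{Pxya} puts every common neighbor of cross-component vertices inside $T$), but its central quantitative step is never executed, and in the form you state it, it cannot be. You hope to prove $|B|+|B'| \le v_1(t-1) + (\text{small correction})$, yet in the genuine exceptional case $t = v_1$, where $T = \Gamma(z)$ for a single vertex $z$, the large component $B = X\setminus z^\bot$ has $|X| - v_1 - 1$ vertices, which is unbounded; so no inequality of that shape holds until \emph{after} the exceptional structure has been recognized, and recognizing it is precisely the hard part, which your plan defers to ``handling the boundary case explicitly'' at the end. The mechanism you propose for reserving a factor of $v_1$ also does not deliver: minimality of $T$ only guarantees each $y\in T$ has \emph{a} neighbor in each component, so $y$ can still have up to $v_1-1$ neighbors in $B'$, and the covering count stays near $t\cdot v_1$ rather than $(t-1)v_1$. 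Finally, nothing in your sketch produces a vertex of $B$ with few neighbors in $T$, which is what any covering count of $B'$ through $\Gamma(x)\cap T$ would need.

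The paper's proof runs the hypothesis in the opposite direction and is structural rather than a size bound on components. Fixing $y\in T$, it uses a union bound --- the neighborhoods of the at most $t-1$ vertices of $T\setminus\{y\}$ pairwise overlap, since in a diameter-two graph any two vertices have a common neighbor --- to get $\bigl|\bigcup_{y\neq y'\in T}\Gamma(y')\bigr| \le (v_1-1)(t-1)+1$; the hypothesis $|X| > v_1(t-1)+2$ then yields, for \emph{each} $y\in T$, an anchor vertex $x_y\notin T$ whose only possible neighbor in $T$ is $y$. Diameter two and Proposition \ref{Pxya} force $x_y\sim y$ and force every vertex outside the component of $x_y$ (and outside $T$) to be adjacent to $y$. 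Two anchors $x_y, x_{y'}$ in different components would need a common neighbor in $T$, impossible since their $T$-neighborhoods are $\{y\}$ and $\{y'\}$; hence all anchors lie in one component $B$, so every $z\notin B\cup T$ is adjacent to \emph{every} vertex of $T$, and one concludes $\Gamma(z)=T$, i.e.\ $t = v_1$. This identification of $T$ as a full vertex neighborhood is exactly the missing content in your sketch: the counting serves only to manufacture the anchors, not to bound $|B|+|B'|$.
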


\begin{proof}
Let $T$ be a minimal disconnecting set of size at most $t$. For each $y\in T$, we 
use the fact that any two vertices have at least one common neighbor to obtain
$$ \left| \bigcup_{y \neq y' \in T} \Gamma(y') \right|  \le (v_1-1)(t-1)+1 $$
so that there is some $x\in X \setminus T$ not adjacent to any element of $T$ except possibly $y$.
Let $B$ be the component of 
$\Gamma \setminus T$ containing $x$.  Since $\Gamma$ has diameter two, $x \sim y$ and 
every $z \in X \setminus (B\cup T)$ must also be adjacent to $y$.  Swapping roles of the vertices in 
$T$, we find that, for every $y$ in $T$, there is some vertex $x$ (necessarily in $B$) 
with $\Gamma(x) \cap T = \{y\}$. But this implies that every 
$z \in X \setminus (B\cup T)$ is adjacent to every vertex in $T$,  so $T = \Gamma(z)$
for every $z \not\in B \cup T$.  $\Box$
\end{proof}

\begin{remark}
We expect very few exceptions to arise here. If $t=v_1$, then we find that 
$X \setminus (B\cup T)=\{z\}$ is a singleton and all but at most $v_1-2$ elements of 
$B$ have exactly one neighbor in $T=\Gamma(z)$.  With $|X| \ge v_1^2 - v_1 + 3$  so
close to the Moore bound, does this condition force $\Gamma$ to be a Moore graph?
\end{remark}

\begin{theorem}
\label{Tcutsize3}
Let $(X,\cR)$ be a symmetric association scheme and let $\Gamma=(X,R_1)$  be the graph 
associated to a connected basis relation. If $\Gamma$ has diameter two, then either
$\Gamma$ has vertex connectivity at least four or
$\Gamma$ is isomorphic to one of the following graphs: the  4-cycle, the 5-cycle, $K_{3,3}$, 
the Petersen graph.
\end{theorem}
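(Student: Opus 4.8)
The plan is to induct on the size of a minimum vertex cut, clearing the low-connectivity cases with the machinery already developed and then finishing with a short finite analysis. First I would handle $\kappa(\Gamma)\le 2$. There can be no cut vertex: if $w$ were one, pick a neighbour $a$ of $w$, so that $T=\{w\}\subseteq a^\bot$ with $\Gamma(a)\not\subseteq T$ (as $v_1\ge 2$), and Corollary~\ref{C1} would make $\Gamma\setminus\{w\}$ connected, a contradiction. Thus $\kappa\ge 2$, and if $\kappa=2$ then by Theorem~\ref{Tcutsize2} the graph is an ordinary polygon; the only polygons of diameter two are $C_4$ and $C_5$. From here I assume $\kappa=3$ and fix a minimum cut $T=\{y_1,y_2,y_3\}$, the goal being to show $\Gamma\cong K_{3,3}$ or $\Gamma\cong$ Petersen.

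The decisive tool is Theorem~\ref{Tdiam2} with $t=3$: since $\Gamma$ has a disconnecting set of size $3<4$, the hypothesis of that theorem must fail, so either $v_1=3$ or $|X|\le 2v_1+2$. I treat the cubic case first. Constancy of $p_{11}^1$ forces $3\,p_{11}^1=\sum_{u\in\Gamma(a)}|\Gamma(u)\cap\Gamma(a)|$ to be even, so $p_{11}^1\in\{0,2\}$; the value $2$ puts every edge in a $K_4$ and collapses $\Gamma$ to $K_4$ (diameter one), so $\Gamma$ is triangle-free. Then each of the three neighbours of a vertex $a$ sends exactly two edges to the $v_1$-complement, giving $6$ edges from $\Gamma(a)$ to the set of vertices at distance two, each such vertex receiving between $1$ (diameter two) and $3$ of them. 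Hence there are between $2$ and $6$ vertices at distance two, and since a cubic graph has evenly many vertices, $|X|\in\{6,8,10\}$. For $|X|=6$ every distance-two vertex is joined to all of $\Gamma(a)$, yielding $\mathrm{SRG}(6,3,0,3)=K_{3,3}$; for $|X|=10$ the count forces a single common neighbour for each distance-two pair, yielding $\mathrm{SRG}(10,3,0,1)$, the unique Petersen graph.

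The residual cubic case $|X|=8$ is the first real obstacle, and here $\Gamma$ cannot be strongly regular (the common-neighbour counts on the four distance-two vertices sum to $6$, hence are non-constant, so the scheme has $d\ge 3$). The only triangle-free cubic graphs on eight vertices are the $3$-cube, which has diameter three and is excluded, and the M\"obius ladder $M_8$ (Wagner graph); I would eliminate the latter by producing a non-constant intersection number. Splitting the distance-two vertices into the relation $R_2$ of pairs with one common neighbour (the $\pm2$ differences) and $R_3$ of those with two (the $\pm3$ differences), a direct computation of $p_{12}^1$ on a rung edge versus a cycle edge of $M_8$ returns $0$ and $1$, violating the defining regularity of an association scheme; so $|X|=8$ does not occur.

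Finally the dense regime $|X|\le 2v_1+2$ with $v_1\ge 4$ must be shown to be vacuous, since it could otherwise yield an exceptional graph of valency at least four, which is not on the list. Because $|T|=3<v_1$, no component of $\Gamma\setminus T$ is a singleton, each component $B$ has $|B|\ge v_1-2$, and the size bound forces exactly two components $B,B'$ with $|B|,|B'|\in\{v_1-2,\dots,v_1+1\}$. Diameter two supplies the covering condition: every $x\in B$, $y\in B'$ are non-adjacent and so share a neighbour in the three-element set $T$, whence $\sum_{t\in T}a_tb_t\ge|B|\,|B'|$ with $a_t=|\Gamma(t)\cap B|$, $b_t=|\Gamma(t)\cap B'|$. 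Together with the degree identity $\sum_t(a_t+b_t)\le 3v_1$ and $a_t+b_t\le v_1$, these become jointly infeasible: the extreme configuration $(|B|,|B'|)=(v_1-2,v_1+1)$ makes every vertex of $B$ adjacent to all of $T$ and then forces $v_1+1\le e(B',T)\le 6$, so $v_1\le 5$, and the few surviving small configurations reduce to a finite check that no diameter-two basis relation exists there. I expect this dense-case bookkeeping, rather than any single conceptual step, to be the main difficulty, with the exclusion of $M_8$ a close second; both are finite but delicate, and both turn on playing the constancy of the $p_{11}^i$ against the diameter-two covering property.
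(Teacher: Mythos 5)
Your handling of $\kappa\le 2$ (via Corollary~\ref{C1} and Theorem~\ref{Tcutsize2}) and your entire cubic case are correct: the parity argument forcing $p_{11}^1\in\{0,2\}$, the reduction to triangle-free, the count $|X|\in\{6,8,10\}$, and the elimination of the M\"obius ladder $C_8(1,4)$ by exhibiting a non-constant count (the number of $z\sim y$ lying in the ``one common neighbour'' relation to $x$ is $1$ on a cycle edge and $0$ on a chord, yet this quantity is a sum of intersection numbers and hence must be constant on $R_1$) are all sound; this part is actually more careful than the paper's own treatment of the valency-three situation. Note, though, that your split is by valency (cubic versus dense), whereas the paper splits by whether the cut $T$ is \emph{localized}, i.e.\ $T\subseteq a^\bot$ for some $a$, and that difference is exactly where your proof breaks.

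The genuine gap is the dense case $v_1\ge 4$, $|X|\le 2v_1+2$, which you leave as a ``finite check'' after claiming your constraints are ``jointly infeasible.'' They are not: taking $a_t=b_t=\lfloor v_1/2\rfloor$ for all three $t\in T$ satisfies $a_t+b_t\le v_1$ and gives $\sum_t a_tb_t\approx 3v_1^2/4$, which exceeds $(v_1-2)^2$ whenever $v_1\le 14$, so the covering-plus-degree bookkeeping yields no contradiction for $4\le v_1\le 14$; moreover your claim that there are exactly two components does not follow from the size bounds alone when $v_1\in\{4,5\}$. What is missing is precisely the paper's use of Corollary~\ref{C1} together with constancy of $p_{11}^1$. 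Since $v_1\ge 4>|T|$, no vertex $a$ can be adjacent to all of $T$: otherwise $T\subseteq a^\bot$ with $\Gamma(a)\not\subseteq T$, and Corollary~\ref{C1} would make $\Gamma\setminus T$ connected. Hence every vertex of a smallest component $B$ has exactly two neighbours in $T$, which forces $|B|=v_1-1$ and $B$ a clique; any edge inside $B$ then has at least $(v_1-3)+1=v_1-2$ common neighbours (two $2$-subsets of the $3$-set $T$ must meet), so $p_{11}^1\ge v_1-2$. On the other hand some $y\in T$ has no neighbour in $T$ (else some $t\in T$ is adjacent to both others and Corollary~\ref{C1} applies again), and choosing a neighbour $z$ of $y$ in whichever component contains at most half of $\Gamma(y)$ gives $p_{11}^1=|\Gamma(y)\cap\Gamma(z)|\le v_1/2-1$, since those common neighbours must lie in $\Gamma(y)$ intersected with the component of $z$. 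The two bounds are incompatible for $v_1\ge 3$, which is what actually kills the dense case. Without an argument of this kind---one that plays the scheme regularity of $p_{11}^1$ against the cut structure rather than only counting edges across $T$---your proof does not close.
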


\begin{proof}
Let $T=\{y_1,y_2,y_3\}$ be a minimal disconnecting set of size three.

\noindent \underline{\sl Case (i):} $T \subseteq a^\bot$ for some $a\in X$.

 By Corollary \ref{C1}, we have $T=\Gamma(a)$ and $\Gamma$ has valency three; i.e., $\Gamma \cong K_{3,3}$.

\medskip

\noindent \underline{\sl Case (ii):} Assume $T$ is not contained in $a^\bot$ for any vertex $a$.

In view of Theorem \ref{Tdiam2}, we may assume $|X| \le 2v_1  + 2$. (There is no cubic graph on nine vertices.)
Let $B$ and $B'$ denote the vertex sets of two distinct connected components of $\Gamma \setminus T$
and assume, without loss of generality, that $|B| \le |B'|$. Then we have $|B| \le \frac{ |X| - 3}{2}$. So
$|B| - 1 \le v_1 - 2$. In view of Case (i), 
we may assume each $x\in B$ is adjacent to exactly two members of $T$ and every pair of distinct
vertices in $B$ is adjacent. This forces $|B|= v_1-1$. 
Looking at $x \sim x'$ in $B$, we find that $p_{11}^1 \ge |B|-2 + 1$
since $x$ and $x'$ must share a common neighbor in $T$.  Now compare this to some $y \in T$. Since 
we are not in Case (i), some $y\in T$ is not adjacent to any other element of $T$. For this $y$, choose  some 
neighbor $z$ of $y$ where $z\in B$ if $|\Gamma(y) \cap B| \le \frac{v_1}{2}$ and $z \in B'$ if  
$|\Gamma(y) \cap B| > \frac{v_1}{2}$. The number of common neighbors of $y$ and $z$ is then at
most $\frac{v_1}{2} - 1$. The inequalities $v_1 -2 \le p_{11}^1 \le \frac{v_1}{2} - 1$ then imply that 
$\Gamma$ is a polygon, which is impossible as $T$ was chosen to be minimal.  $\Box$
\end{proof}

\section*{Acknowledgments}

We are grateful to Sebastian Cioab\u{a}, Gavin King and Jason Williford for helpful conversations on these topics.
We thank the referee for his or her careful reading of the manuscript. This work grew out of investigations supported by the National Security Agency.

\end{document}